\title{Constructible isocrystals}
\author{Bernard Le Stum\thanks{bernard.le-stum@univ-rennes1.fr}}
\date{Version of \today}
\newtheorem{thm}{Theorem}[section]
\newtheorem{prop}[thm] {Proposition}
\newtheorem{cor}[thm] {Corollary}
\newtheorem{lem}[thm] {Lemma}
\newtheorem{dfn}[thm] {Definition}
\begin{document}

\maketitle
 
\begin{abstract}
We introduce a new category of coefficients for $p$-adic cohomology called constructible isocrystals.
Conjecturally, the category of constructible isocrystals endowed with a Frobenius structure is equivalent to the category of perverse holonomic arithmetic $\mathcal D$-modules.
We prove here that a constructible isocrystal is completely determined by any of its geometric realizations.
\end{abstract}

\tableofcontents

\addcontentsline{toc}{section}{Introduction}
\section*{Introduction}

The relation between topological invariants and differential invariants of a manifold is always fascinating.
We may first recall de Rham theorem that implies the existence of an isomorphism
$$
\mathrm H^i_{\mathrm{dR}}(X) \simeq \mathrm{Hom}(\mathrm H_{i}(X), \mathbb C)
$$
on any complex analytic manifold.
The non abelian version is an equivalence of categories
$$
\mathrm{MIC}(X) \simeq \mathrm{Rep}_{\mathbb C}(\pi_{1}(X,x))
$$
between coherent modules endowed with an integrable connection and finite dimensional representations of the fundamental group.
The same result holds on a smooth algebraic variety if we stick to regular connections (see \cite{Deligne70} or Bernard Malgrange's lecture in \cite{Borel87}).
It has been generalized by Masaki Kashiwara (\cite{Kashiwara84}) to an equivalence
$$
\mathrm D^{\mathrm b}_{\mathrm{reg,hol}}(X) \simeq \mathrm D^{\mathrm b}_{\mathrm{cons}}(X^{\mathrm {an}})
$$
between the categories of bounded complexes of $\mathcal D_{X}$-modules with regular holonomic cohomology and bounded complexes of $\mathbb C_{X^{\mathrm{an}}}$-modules with constructible cohomology.

Both categories come with a so-called $t$-structure but these $t$-structures do not correspond under this equivalence.
Actually, they define a new $t$-structure on the other side that may be called \emph{perverse}.
The notion of perverse sheaf on $X^\mathrm{an}$ has been studied for some time now (see \cite{Borel87} for example).
On the $\mathcal D$-module side however, this notion only appeared in a recent article of Kashiwara (\cite{Kashiwara04}) even if he does not give it a name (we call it perverse but it might as well be called constructible (see \cite{Abe13*}).
Anyway, he shows that the perverse $t$-structure on $\mathrm  D^b_{\mathrm{reg,hol}}(X)$ is given by
$$
\left\{
\begin{array}{l}
\mathrm D^{\leq 0} : \mathrm{codim}\,\mathrm{supp}\, \mathcal H^n(\mathcal F^\bullet) \geq n\ \mathrm{for}\ n \geq 0 \\
\mathrm D^{\geq 0} :  \mathcal H^n_{Z}(\mathcal F^\bullet) = 0\ \mathrm{for}\  n < \mathrm{codim} Z.
\end{array}
\right.
$$
In particular, if we call \emph{perverse} a complex of $\mathcal D_{X}$-modules satisfying both conditions, there exists an equivalence of categories
$$
\mathrm D^{\mathrm{perv}}_{\mathrm{reg,hol}}(X) \simeq \mathrm{Cons}(X^{\mathrm {an}})
$$
between the categories of perverse (complexes of) $\mathcal D_{X}$-modules with regular holonomic cohomology and constructible $\mathbb C_{X^{\mathrm{an}}}$-modules.

In a handwritten note \cite{Deligne*} called ``Cristaux discontinus'', Pierre Deligne gave an algebraic interpretation of the right hand side of this equivalence.
More precisely, he introduces the notion of constructible pro-coherent crystal and proves an equivalence
$$
\mathrm{Cons}_{\mathrm{reg,pro-coh}}(X/\mathbb C) \simeq \mathrm{Cons}(X^{\mathrm {an}})
$$
between the categories of regular constructible pro-coherent crystals and constructible $\mathbb C_{X^{\mathrm{an}}}$-modules.

By composition, we obtain what may be called the \emph{Deligne-Kashiwara correspondence}
$$
\mathrm{Cons}_{\mathrm{reg,pro-coh}}(X/\mathbb C) \simeq \mathrm D^{\mathrm{perv}}_{\mathrm{reg,hol}}(X).
$$
It would be quite interesting to give an algebraic construction of this equivalence but this is not our purpose here.
Actually, we would like to describe an arithmetic analog.

Let $K$ be a $p$-adic field with discrete valuation ring $\mathcal V$ and perfect residue field $k$.
Let $X \hookrightarrow P$ be a locally closed embedding of an algebraic $k$-variety into a formal $\mathcal V$-scheme.
Assume for the moment that $P$ is smooth and quasi-compact, and that the locus of $X$ at infinity inside $P$ has the form $D \cap \overline X$ where $D$ is a divisor in $P$.
We may consider the category $\mathrm  D^b(X \subset P/K)$ of bounded complexes of $\mathcal D^\dagger_{P}({}^\dagger D)_{\mathbb Q}$-modules on $P$ with support on $\overline X$ (see \cite{Berthelot02} for example).
On the other hand, we may also consider the category of overconvergent isocrystals on $(X \subset P/K)$.
In \cite{Caro09}, Daniel Caro proved that there exists a fully faithful functor
$$
\mathrm{sp}_{+} : \mathrm{Isoc}^\dagger_{\mathrm{coh}}(X \subset P/K) \to \mathrm  D^{\mathrm b}_{\mathrm{coh}}(X \subset P/K)
$$
(the index $\mathrm{coh}$ simply means overconvergent isocrystals in Berthelot's sense - see below).
This is the first step towards an overconvergent Deligne-Kashiwara correspondence.
Note that this construction is extended to a slightly more general situation by Tomoyuki Abe and Caro in \cite{AbeCaro13*} and was already known to Pierre Berthelot in the case $\overline X = P_{k}$ (proposition 4.4.3 of \cite{Berthelot96}).

In \cite{LeStum14}, we defined a category that we may denote $\mathrm{MIC}^\dagger_{\mathrm{cons}}(P/K)$ of convergent constructible $\nabla$-modules on $P_{K}$ when $P$ is a geometrically connected smooth proper curve over $\mathcal V$, as well as a category $\mathrm  D^{\mathrm {perv}}(P/K)$ of perverse (complexes of) $\mathcal D^\dagger_{P\mathbb Q}$-modules on $P$, and built a functor
$$
\mathrm R \widetilde{\mathrm{sp}}_{*} : \mathrm{MIC}^\dagger_{\mathrm{cons}}(P/K) \to \mathrm  D^{\mathrm{perv}}_{\mathrm{coh}}(P/K).
$$
Actually, we proved the overconvergent Deligne-Kashiwara correspondence in this situation:
this functor induces an equivalence of categories
$$
\mathrm R \widetilde{\mathrm{sp}}_{*}  : F\mathrm{-MIC}^\dagger_{\mathrm{cons}}(P/K) \simeq F\mathrm{-D}^{\mathrm{perv}}_{\mathrm{hol}}(P/K)
$$
between (convergent) constructible $F$-$\nabla$-modules on $P_{K}$ and perverse holonomic $F$-$\mathcal D^\dagger_{P\mathbb Q}$-modules on $P$.
Note that this is compatible with Caro's $\mathrm{sp}_{+}$ functor.

In order to extend this theorem to higher dimension, it is necessary to develop a general theory of \emph{constructible (overconvergent) isocrystals}.
One could try to mimic Berthelot's original definition and let $\mathrm{Isoc}^\dagger_{\mathrm{cons}}(X \subset Y \subset P/K)$ be the category of $j^\dagger_{X}\mathcal O_{]Y[}$-modules $\mathcal F$ endowed with an overconvergent connection which are only ``constructible'' and not necessarily coherent (here $X$ is open in $Y$ and $Y$ is closed in $P$).
It means that there exists a locally finite covering of $X$ by locally closed subvarieties $Z$ such that $j^\dagger_{Z}\mathcal F$ is a coherent $j^\dagger_{Z}\mathcal O_{]Y[}$-module.
It would then be necessary to show that the definition is essentially independent of $P$ as long as $P$ is smooth and $Y$ proper, and that they glue when there does not exist any global geometric realization.

We choose here an equivalent but different approach with built-in functoriality.
I introduced in \cite{LeStum11} the overconvergent site of the algebraic variety $X$ and showed that we can identify the category of locally finitely presented modules on this site with the category of overconvergent isocrystals in the sense of Berthelot.
Actually, we can define a broader category of overconvergent isocrystals (without any finiteness condition) and call an overconvergent isocrystal $E$ \emph{constructible} when there exists a locally finite covering of $X$ by locally closed subvarieties $Y$ such that $E_{|Y}$ is locally finitely presented.
Note that $K$ may be any non trivial complete ultrametric field and that there exists a relative theory (over some base $O$).
We denote by $\mathrm{Isoc}^\dagger_{\mathrm{cons}}(X/O)$ the category of constructible overconvergent isocrystals on $X/O$.
 
In order to compute these objects, one may define a category $\mathrm{MIC}^\dagger_{\mathrm{cons}}(X,V/O)$ of constructible modules endowed with an overconvergent connection on any ``geometric realization'' $V$ of $X/O$, as in Berthelot's approach.
We will prove (theorem \ref{thm} below) that, when $\mathrm{Char}(K) \neq 0$, there exists an equivalence of categories
$$
\mathrm{Isoc}^\dagger_{\mathrm{cons}}(X/O) \simeq \mathrm{MIC}^\dagger_{\mathrm{cons}}(X,V/O).
$$
As a corollary, we obtain that the later category is essentially independent of the choice of the geometric realization (and that they glue when there does not exist such a geometric realization).
Note that this applies in particular to the case of the curve $P$ above which ``is'' a geometric realization of $P_{k}$ so that
$$
 \mathrm{Isoc}^\dagger_{\mathrm{cons}}(P_{k}/K) =  \mathrm{MIC}^\dagger_{\mathrm{cons}}(P/K).
$$

In the first section, we briefly present the overconvergent site and review some material that will be needed afterwards.
In the second one, we study some functors between overconvergent sites that are associated to locally closed embeddings.
We do a little more that what is necessary for the study of constructible isocrystal, hoping that this will be useful in the future.
In section three, we introduce overconvergent isocrystals and explain how one can construct and deconstruct them.
In the last section, we show that constructible isocrystals may be interpreted in terms of modules with integrable connections.

\section*{Notations and conventions}

Throughout this article, $K$ denotes a non trivial complete ultrametric field with valuation ring $\mathcal V$ and residue field $k$.

An \emph{algebraic variety} over $k$ is a scheme over $k$ that admits a locally finite covering by schemes of finite type over $k$.
A \emph{formal scheme} over $\mathcal V$ always admits a locally finite covering by $\pi$-adic formal schemes of finite presentation over $\mathcal V$.
An \emph{analytic variety} over $K$ is a strictly analytic $K$-space in the sense of Berkovich (see \cite{Berkovich93} for example).
We will use the letters $X, Y, Z, U, C, D, \ldots$ to denote algebraic varieties over $k$, $P, Q, S$ for formal schemes over $\mathcal V$ and $V, W,O$ for analytic varieties over $K$.
 
An analytic variety over $K$ is said to be \emph{good} if it is locally affinoid.
This is the case for example if $V$ is affinoid, proper or algebraic, or more generally, if $V$ is an open subset of such a variety.
 Note that, in Berkovich original definition \cite{Berkovich90}, all analytic varieties were good.

As usual, we will write $\mathbb A^1$ and $\mathbb P^1$ for the affine and projective lines.
We will also use $\mathbb D(0, 1^\pm)$ for the open or closed disc of radius $1$.

\section*{Acknowledgments} Many thanks to Tomoyuki Abe, Pierre Berthelot, Florian Ivorra, Vincent Mineo-Kleiner, Laurent Moret-Bailly, Matthieu Romagny and Atsushi Shiho with whom I had interesting conversations related to some questions discussed here.

\section{The overconvergent site} \label{bible}
We briefly recall the definition of the overconvergent site from \cite{LeStum11}.
An object is made of
\begin{enumerate}
\item a locally closed embedding $X \hookrightarrow P$ of an algebraic variety (over $k$) into a formal scheme (over $\mathcal V$) and
\item a morphism $\lambda : V \to P_{K}$ of analytic varieties (over $K$).
\end{enumerate}

We denote this object by $X \subset P \stackrel {\mathrm{sp}}\leftarrow P_{K} \leftarrow V$ and call it an \emph{overconvergent variety}.
Here, $\mathrm{sp}$ denotes the \emph{specialization} map and we also introduce the notion of \emph{tube} of $X$ in $V$ which is
$$
]X[_{V} := \lambda^{-1}(\mathrm{sp}^{-1}(X)).
$$
We call the overconvergent variety \emph{good} if any point of $]X[_{V}$ has an affinoid neighborhood in $V$.
It makes it simpler to assume from the beginning that all overconvergent varieties are good since the important theorems can only hold for those (and bad overconvergent varieties play no role in the theory).
But, on the other hand, most constructions can be carried out without this assumption.

We define a \emph{formal morphism} between overconvergent varieties as a triple of compatible morphisms:
$$
\xymatrix{X' \ar@{^{(}->}[r] \ar[d]^f & P' \ar[d]^v & P'_{K} \ar[l] \ar[d]^{v_{K}} & V' \ar[l] \ar[d]^u \\ X \ar@{^{(}->}[r] & P & P_{K} \ar[l] & V \ar[l]
}
$$
Such a formal morphism induces a continuous map
$$
]f[_{u} : ]X'[_{V'} \to ]X[_{V}
$$ 
between the tubes.

Actually, the notion of formal morphism is too rigid to reflect the true nature of the algebraic variety $X$ and it is necessary to make invertible what we call a \emph{strict neighborhood} and that we define now:
it is a formal morphism as above such that $f$ is an isomorphism $X' \simeq X$ and $u$ is an open immersion that induces an isomorphism between the tubes $]X'[_{V'} \simeq ]X[_{V}$.
Formal morphisms admit calculus of right fraction with respect to strict neighborhoods and the quotient category is the \emph{overconvergent site} $\mathrm{An}^\dagger_{/\mathcal V}$.
Roughly speaking, we allow the replacement of $V$ by any neighborhood of $]X[_{V}$ in $V$ and we make the role of $P$ secondary (only existence is required).

Since we call our category a site, we must endow it with a topology which is actually defined by the pretopology of families of formal morphisms
$$
\xymatrix{X \ar@{^{(}->}[r] \ar@{=}[d] & P_{i} \ar[d]^{v_{i}} & P_{iK} \ar[l] \ar[d]^{v_{iK}} & V_{i} \ar[l] \ar@{_{(}->}[d] \\ X \ar@{^{(}->}[r] & P & P_{K} \ar[l] & V \ar[l]
}
$$
where $V_{i}$ is open in $V$ and $]X[_{V} \subset \cup V_{i}$ (this is a \emph{standard} site).

Since the formal scheme plays a very loose role in the theory, we usually denote by $(X, V)$ an overconvergent variety and write $(f,u)$ for a morphism.

We use the general formalism of \emph{restricted} category (also called \emph{localized} or \emph{comma} or \emph{slice} category) to define relative overconvergent sites.
First of all, we define an \emph{overconvergent presheaf} as a presheaf (of sets) $T$ on $\mathrm{An}^\dagger_{/\mathcal V}$.
If we are given an overconvergent presheaf $T$, we may consider the restricted site $\mathrm{An}^\dagger_{/T}$.
An object is a section $s$ of $T$ on some overconvergent variety $(X, V)$ but we like to see $s$ as a morphism from (the presheaf represented by) $(X,V)$ to $T$.
 We will then say that $(X,V)$ is a \emph{(overconvergent) variety over $T$}.
A morphism between varieties over $T$ is just a morphism of overconvergent varieties which is compatible with the given sections.
The above pretopology is still a pretopology on $\mathrm{An}^\dagger_{/T}$ and we denote by $T_{\mathrm{An}^\dagger}$ the corresponding topos.
As explained by David Zureick-Brown in his thesis (see \cite{ZureickBrown10} and \cite{ZureickBrown14*}), one may as well replace $\mathrm{An}^\dagger_{/T}$ by any fibered category over $\mathrm{An}^\dagger_{/\mathcal V}$.
This is necessary if one wishes to work with algebraic stacks instead of algebraic varieties.

As a first example, we can apply our construction to the case of a representable sheaf $T := (X, V)$.
Another very important case is the following: we are given an overconvergent variety $(C, O)$ and an algebraic variety $X$ over $C$.
Then, we define the overconvergent sheaf $X/O$ as follows: a section of $X/O$ is a variety $(X', V')$ over $(C, O)$ with a given factorization $X' \to X \to C$ (this definition extends immediately to algebraic spaces - or even algebraic stacks if one is ready to work with fibered categories).
Alternatively, if we are actually given a variety $(X, V)$ over $(C, O)$, we may also consider the overconvergent presheaf $X_{V}/O$: a section is a variety $(X', V')$ over $(C, O)$ with a given factorization $X' \to X \to C$ which extends to \emph{some} factorization $(X', V') \to (X, V) \to (C, O)$.
Note that we only require the \emph{existence} of the second factorization.
In other words, $X_{V}/O$ is the image presheaf of the natural map $(X, V) \to X/O$.
An important theorem (more precisely its corollary 2.5.12 in \cite{LeStum11}) states that, if we work only with \emph{good} overconvergent varieties, then there exists an isomorphism of topos $(X_{V}/O)_{\mathrm{An}^\dagger} \simeq (X/O)_{\mathrm{An}^\dagger}$ when we start from a \emph{geometric} situation

\begin{align} \label{geom}
\xymatrix{X \ar@{^{(}->}[r] \ar[d]^f & P \ar[d]^v & P_{K} \ar[l] \ar[d]^{v_{K}} & V \ar[l] \ar[d]^u \\ C \ar@{^{(}->}[r] & S & S_{K} \ar[l] & O \ar[l]
}
\end{align}

with $P$ proper and smooth around $X$ over $S$ and $V$ a neighborhood of the tube of $X$ in $P_{K} \times_{S_{K}} O$ (and $(C,O)$ is good).

If we are given a morphism of overconvergent presheaves $v : T' \to T$,  we will also say that $T'$ is a \emph{(overconvergent) presheaf over} $T$.
It will induce a morphism of topos $v_{\mathrm{An}^\dagger} : T'_{\mathrm{An}^\dagger} \to T_{\mathrm{An}^\dagger}$.
We will often drop the index $\mathrm{An}^\dagger$ and keep writing $v$ instead of $v_{\mathrm{An}^\dagger}$.
Also, we will usually write the inverse image of a sheaf $\mathcal F$ as $\mathcal F_{|T'}$ when there is no ambiguity about $v$.
Note that there will exist a triple of adjoint functors $v_{!}, v^{-1}, v_{*}$ with $v_{!}$ exact.

For example, any morphism $(f, u) : (Y, W) \to (X, V)$ of overconvergent varieties will give rise to a morphism of topos
$$
(f,u)_{\mathrm{An}^\dagger} : (Y, W)_{\mathrm{An}^\dagger} \to (X, V)_{\mathrm{An}^\dagger}.
$$
It will also induce a morphism of overconvergent presheaves $f_{u} : Y_{W}/O \to X_{V}/O$ giving rise to a morphism of topos $f_{u\mathrm{An}^\dagger} : (Y_{W}/O)_{\mathrm{An}^\dagger} \to (X_{V}/O)_{\mathrm{An}^\dagger}$.
Finally, if $(C,O)$ is an overconvergent variety, then any morphism $f : Y \to X$ of algebraic varieties over $C$ induces a morphism of overconvergent presheaves $f : Y/O \to X/O$ giving rise to a morphism of topos $f_{\mathrm{An}^\dagger} : (Y/O)_{\mathrm{An}^\dagger} \to (X/O)_{\mathrm{An}^\dagger}$.

If we are given an overconvergent variety $(X, V)$, there exists a \emph{realization} map (morphism of topos)
$$
\xymatrix@R0cm{(X, V)_{\mathrm{An}^\dagger} \ar[r]^{\varphi} & ]X[_{V \mathrm{an}} \\ (X, V') & \ar@{|->}[l] ]X[_{V'}}
$$
where $]X[_{V \mathrm{an}}$ denotes the category of sheaves (of sets) on the analytic variety $]X[_{V}$ (which has a section $\psi$).
Now, if $T$ is any overconvergent presheaf and $(X, V)$ is a variety over $T$, then there exists a canonical morphism $(X, V) \to T$.
Therefore, if $\mathcal F$ is a sheaf on $T$, we may consider its restriction $\mathcal F_{|(X,V)}$ which is a sheaf on $(X, V)$.
We define the \emph{realization} of $\mathcal F$ on $(X, V$) as
$$
\mathcal F_{X,V} := \varphi_{V*}\left(\mathcal F_{|(X,V)}\right)
$$
(we shall simply write $\mathcal F_{V}$ in practice unless we want to emphasize the role of $X$).
As one might expect, the sheaf $\mathcal F$ is completely determined by its realizations $\mathcal F_{V}$ and the transition morphisms $]f[_{u}^{-1}\mathcal F_{V} \to \mathcal F_{V'}$ obtained by functoriality whenever $(f, u) : (X', V') \to (X, V)$ is a morphism over $T$.

We will need below the following result :

\begin{prop}\label{cart}
If we are given a \emph{cartesian} diagram of overconvergent presheaves (with a representable upper map)
$$
\xymatrix{
(X', V') \ar[rr]^{(f,u)} \ar[d]^{s'}  && (X, V) \ar[d]^s 
\\
T' \ar[rr]^v && T,
}
$$
and $\mathcal F'$ is a sheaf on $T'$, then
$$
(v_{*}\mathcal F')_{V} = ]f[_{u*}\mathcal F'_{V'}.
$$
\end{prop}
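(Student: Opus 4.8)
The plan is to split the statement into two parts and combine them formally. Recall that for a sheaf $\mathcal G$ on $T$ one has $\mathcal G_{V} = \varphi_{V*}(\mathcal G_{|(X,V)})$, where $\varphi_{V}$ is the realization map of $(X,V)$, and likewise $\mathcal F'_{V'} = \varphi_{V'*}(\mathcal F'_{|(X',V')})$ for $\mathcal F'$ a sheaf on $T'$, $\varphi_{V'}$ being the realization map of $(X',V')$. So it is enough to prove two things: (1) that $(v_{*}\mathcal F')_{|(X,V)} = (f,u)_{*}(\mathcal F'_{|(X',V')})$ as sheaves on $(X,V)_{\mathrm{An}^\dagger}$, where $(f,u)$ also denotes the induced morphism of topos; and (2) that the realization maps are functorial, i.e. $\varphi_{V}\circ (f,u) = ]f[_{u}\circ \varphi_{V'}$ as morphisms of topos. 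Granting both,
\begin{align*}
(v_{*}\mathcal F')_{V} &= \varphi_{V*}\bigl((v_{*}\mathcal F')_{|(X,V)}\bigr) = \varphi_{V*}(f,u)_{*}\bigl(\mathcal F'_{|(X',V')}\bigr) \\
&= ]f[_{u*}\varphi_{V'*}\bigl(\mathcal F'_{|(X',V')}\bigr) = ]f[_{u*}\mathcal F'_{V'},
\end{align*}
which is the assertion.

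For (1) I would argue directly from the description of $v_{*}$ on the restricted site. If $(Y,W)$ is a variety over $(X,V)$, then by construction $(v_{*}\mathcal F')_{|(X,V)}$ takes at $(Y,W)$ the sections of $\mathcal F'$ over the presheaf $(Y,W)\times_{T}T'$, the latter seen over $T'$; on the other hand, by construction of the direct image along the morphism of overconvergent varieties $(f,u)$, the sheaf $(f,u)_{*}(\mathcal F'_{|(X',V')})$ takes at $(Y,W)$ the sections of $\mathcal F'$ over the presheaf $(Y,W)\times_{(X,V)}(X',V')$, this one seen over $T'$ through its projection to $(X',V')$. The hypothesis that the square is cartesian with representable upper map says precisely that $(X',V') = (X,V)\times_{T}T'$ with $(f,u)$ the projection; it therefore identifies the two presheaves above, together with their maps to $T'$, so the two sets of sections agree, functorially in $(Y,W)$. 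One should also check that no sheafification intervenes here — this holds because the functors at play are cocontinuous and the covering families used are stable under the pullbacks involved; conceptually, (1) is just base change for the localization morphism $s\colon (X,V)_{\mathrm{An}^\dagger}\to T_{\mathrm{An}^\dagger}$ along $v$.

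Statement (2) is the functoriality of the realization map, and is part of its construction in \cite{LeStum11}: it is precisely what produces the transition morphisms $]f[_{u}^{-1}\mathcal F_{V}\to \mathcal F_{V'}$ recalled above, promoted to an equality of morphisms of topos; I would take it as known. The real content is therefore (1), and within it the only point requiring care is the bookkeeping: writing down $v_{*}$ correctly on the restricted site and checking that cartesianness makes the two indexing presheaves — hence the two values of $\mathcal F'$ — coincide, the sheafification issue being routine. Everything else is formal manipulation of adjunctions together with the already available functoriality of realization.
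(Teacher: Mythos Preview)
Your proof is correct and follows essentially the same route as the paper's: the paper records your step (1) as the formal base change $s^{-1}v_{*}\mathcal F' = (f,u)_{*}s'^{-1}\mathcal F'$ (without further justification), then uses your step (2) in the form $\varphi_{V*}(f,u)_{*} = ]f[_{u*}\varphi_{V'*}$ to conclude by the same chain of equalities you wrote. You have simply unpacked what the paper calls ``formal''.
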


\begin{proof}
Since the diagram is cartesian, we have (this is formal)
$$
s^{-1}v_{*}\mathcal F' = (f,u)_{*}s'^{-1} \mathcal F'.
$$
It follows that 
\begin{align*}
(v_{*}\mathcal F')_{V} &= \varphi_{V*}s^{-1}v_{*}\mathcal F' 
\\
&= \varphi_{V*}(f,u)_{*}s'^{-1}\mathcal F' 
\\
&= ]f[_{u*}\varphi_{V'*}s'^{-1}\mathcal F'
\\
&=  ]f[_{u*}\mathcal F'_{V'}. \qedhere
\end{align*}
\end{proof}

If $(X, V)$ is an overconvergent variety, we will denote by $i_{X} : ]X[_{V} \hookrightarrow V$ the inclusion map.
Then, if $T$ is an overconvergent presheaf, we define the structural sheaf of $\mathrm{An}^\dagger_{/T}$ as the sheaf $\mathcal O^\dagger_{T}$ whose realization on any $(X, V)$ is $i_{X}^{-1} \mathcal O_{V}$.
An $\mathcal O^\dagger_{T}$-module $E$ will also be called a \emph{(overconvergent) module} on $T$.
As it was the case for sheaves of sets, the module $E$ is completely determined by its realizations $E_{V}$ and the transition morphisms
\begin{align} \label{transm}
]f[_{u}^{\dagger}E_{V} := i_{X'}^{-1}u^*i_{X*} E_{V }\to E_{V'}
\end{align}
obtained by functoriality whenever $(f, u) : (X', V') \to (X, V)$ is a morphism over $T$.
A module on $T$ is called an \emph{(overconvergent) isocrystal} if all the transition maps \eqref{transm} are actually isomorphisms  (used to be called a crystal in \cite{LeStum11}).
We will denote by
$$
\mathrm{Isoc}^\dagger(T) \subset \mathcal O^\dagger_{T}\mathrm{-Mod}
$$
the full subcategory made of all isocrystals on $T$ (used to be denoted by $\mathrm{Cris}^\dagger(T)$ in \cite{LeStum11}).
Be careful that inclusion is only right exact in general.

If we are given a morphism of overconvergent presheaves $v : T' \to T$ then the functors $v_{!}, v^{-1}, v_{*}$ preserve modules (we use the same notation $v_{!}$ for sheaves of sets and abelian groups:
 this should not create any confusion) and $v^{-1}$ preserves isocrystals.

One can show that a module on $T$ is locally finitely presented if and only if it is an isocrystal with coherent realizations.
We will denote their category by $\mathrm{Isoc}^\dagger_{\mathrm{coh}}(T)$ (be careful that it only means that the realizations are coherent: $\mathcal O^\dagger_{T}$ is not a coherent ring in general).
In the case $T = X/S_{K}$ and $\mathrm{Char}(K) = 0$, this is equivalent to Berthelot's original definition 2.3.6 in \cite{Berthelot96c*} of an overconvergent isocrystal.

Back to our examples, it is not difficult to see that, when $(X,V)$ is an overconvergent variety, the realization functor induces an equivalence of categories
$$
\mathrm{Isoc}^\dagger(X,V) \simeq i_{X}^{-1}\mathcal O_{V}\mathrm{-Mod}
$$
between isocrystals on $(X, V)$ and $i_{X}^{-1}\mathcal O_{V}$-modules.
Now, if $(X, V)$ is a variety over an overconvergent variety $(C,O)$ and
$$
p_{1}, p_{2} : (X, V \times_{O} V) \to (X, V)
$$
denote the projections, we define an \emph{overconvergent stratification} on an $i_{X}^{-1}\mathcal O_{V}$-module $\mathcal F$ as an isomorphism
$$
\epsilon : ]p_{2}[^\dagger \mathcal F \simeq ]p_{1}[^\dagger \mathcal F
$$
that satisfies the cocycle condition on triple products and the normalization condition along the diagonal.
They form an additive category $\mathrm{Strat}^\dagger(X,V/O)$ with cokernels and tensor products.
It is even an abelian category when $V$ is universally flat over $O$ in a neighborhood of $]X[_{V}$.
In any case, the realization functor will induce an equivalence 
$$
\mathrm{Isoc}^\dagger(X_{V}/O) \simeq \mathrm{Strat}^\dagger(X,V/O).
$$
We may also consider for $n \in \mathbb N$, the $n$-th infinitesimal neighborhood $V^{(n)}$ of $V$ in $V \times_{O} V$.
Then, a \emph{(usual) stratification} on an $i_{X}^{-1}\mathcal O_{V}$-module $\mathcal F$ is a compatible family of isomorphisms
$$
\epsilon^{(n)} : i_{X}^{-1}\mathcal O_{V^{(n)}} \otimes_{i_{X}^{-1}\mathcal O_{V}} \mathcal F \simeq \mathcal F \otimes_{i_{X}^{-1}\mathcal O_{V}} i_{X}^{-1}\mathcal O_{V^{(n)}}
$$
that satisfy the cocycle condition on triple products and the normalization condition along the diagonal.
Again, they form an additive category $\mathrm{Strat}(X,V/O)$ with cokernels and tensor products, and even an abelian category when $V$ is smooth over $O$ in a neighborhood of $]X[_{V}$.
There exists an obvious faithful functor
\begin{align} \label{map1}
\mathrm{Strat}^\dagger(X,V/O) \to \mathrm{Strat}(X,V/O).
\end{align}
Note that, \emph{a priori}, different overconvergent stratifications might give rise to the same usual stratification (and of course many usual stratifications will \emph{not} extend at all to an overconvergent one).
Finally, a \emph{connection} on an $i_{X}^{-1}\mathcal O_{V}$-module $\mathcal F$ is an $\mathcal O_{O}$-linear map
$$
\nabla : \mathcal F \to \mathcal F \otimes_{i_{X}^{-1}\mathcal O_{V}} i_{X}^{-1} \Omega^1_{V}
$$
that satisfies the Leibnitz rule.
Integrability is defined as usual.
They form an additive category $\mathrm{MIC}(X,V/O)$ and there exists again a faithful functor
\begin{align} \label{map2}
\mathrm{Strat}(X,V/O) \to \mathrm{MIC}(X,V/O)
\end{align}
 ($\nabla$ is induced by $\epsilon^{(1)} - \sigma$ where $\sigma$ switches the factors in $V \times_{O} V$).
When $V$ is smooth over $O$ in a neighbourhood of $]X[_{V}$ and $\mathrm{Char}(K) = 0$, then the functor \eqref{map2} is an equivalence.
Actually, both categories are then equivalent to the category of $i_{X}^{-1}\mathcal D_{V/O}$-modules.
In general, we will denote by $\mathrm{MIC}^\dagger(X,V/O)$ the image of the composition of the functors \eqref{map1} and \eqref{map2} and then call the connection \emph{overconvergent}
(and add an index $\mathrm{coh}$ when we consider only coherent modules).
Thus, there exists a realization functor
\begin{align} \label{map3}
\mathrm{Isoc}^\dagger(X_{V}/O) \to \mathrm{MIC}^\dagger(X,V/O)
\end{align}
which is faithful and essentially surjective (but not an equivalence in general).
In practice, we are interested in isocrystals on $X/O$ where $(C,O)$ is an overconvergent variety and $X$ is an algebraic variety over $C$.
We can localize in order to find a geometric realization $V$ for $X$ over $O$ such as \eqref{geom} and work directly on $(X,V)$: there exists an equivalence of categories
$$
\mathrm{Isoc}^\dagger(X/O) \simeq \mathrm{Isoc}^\dagger(X_{V}/O)
$$
that may be composed with \eqref{map3} in order to get the realization functor
$$
\mathrm{Isoc}^\dagger(X/O) \to \mathrm{MIC}^\dagger(X,V/O).
$$
In \cite{LeStum11}, we proved that, when $\mathrm{Char}(K) = 0$, it induces an equivalence
$$
\mathrm{Isoc}_{\mathrm{coh}}^\dagger(X/O) \simeq \mathrm{MIC}_{\mathrm{coh}}^\dagger(X,V/O)
$$
(showing in particular that the right hand side is independent of the choice of the geometric realization and that they glue).
We will extend this below to what we call \emph{constructible isocrystals}.

\section{Locally closed embeddings} \label{emb}

In this section, we fix an algebraic variety $X$ over $k$.
Recall that a (overconvergent) variety over $X/\mathcal M(K)$ (we will simply write $X/K$ in the future) is a pair made of an overconvergent variety $(X', V')$ and a morphism $X' \to X$.
In other words, it is a diagram
$$
\xymatrix{&& V' \ar[d] \\ X' \ar@{^{(}->}[r] \ar[d] & P' & P'_{K} \ar[l] \\ X && \quad }
$$
where $P'$ is a formal scheme.

We also fix a presheaf $T$ over $X/K$.
For example, $T$ could be (the presheaf represented by) an overconvergent variety $(X', V')$ over $X/K$.
Also, if $(C, O)$ is an overconvergent variety and $X$ is an algebraic variety over $C$, then we may consider the sheaf $T := X/O$ (see section \ref{bible}).
Finally, if we are given a morphism of overconvergent varieties $(X, V) \to (C, O)$, then we could set $T := X_{V}/O$ (see section \ref{bible} again).

Finally, we also fix an open immersion $\alpha : U \hookrightarrow X$ and denote by $\beta : Z \hookrightarrow X$ the embedding of a closed complement.
Actually, in the beginning, we consider more generally a locally closed embedding $\gamma : Y \hookrightarrow X$.

\begin{dfn}
The \emph{restriction} of $T$ to $Y$ is the inverse image
$$
T_{Y} := (Y/K) \times_{(X/K)} T
$$
of $T$ over $Y/K$.
We will still denote by $\gamma : T_{Y} \hookrightarrow T$ the corresponding map.
When $\mathcal F$ is a sheaf on $T$, the \emph{restriction} of $T$ to $Y$ is $\mathcal F_{|Y} := \gamma^{-1}\mathcal F$.
\end{dfn}

For example, if $T = (X', V')$ is a variety over $X/K$, then $T_{Y} = (Y', V')$ where $Y'$ is the inverse image of $Y$ in $X'$.
Also, if $(C, O)$ is an overconvergent variety, $X$ is an algebraic variety over $C$ and $T = X/O$, then $T_{Y} = Y/O$.
Finally, if we are given a morphism of overconvergent varieties $(X, V) \to (C, O)$ and $T = X_{V}/O$, then we will have $T_{Y} = Y_{V}/O$.

If $(X, V)$ is an overconvergent variety, we may consider the morphism of overconvergent varieties $(\gamma, \mathrm{Id}_{V}) : (Y, V) \hookrightarrow (X, V)$.
We will then denote by $]\gamma[_{V} : ]Y[_{V} \hookrightarrow ]X[_{V}$ or simply $]\gamma[$ if there is no ambiguity, the corresponding map on the tubes.
Recall that $]\gamma[$ is the inclusion of an analytic domain.
This is an open immersion when $\gamma$ is a closed embedding and conversely (we use Berkovich topology).

The next result generalizes proposition 3.1.10 of \cite{LeStum11}.

\begin{prop}\label{direct}
Let $(X', V')$ be an overconvergent variety over $T$ and $\gamma' : Y' \hookrightarrow X'$ the inclusion of the inverse image of $Y$ inside $X'$.
If $\mathcal F$ is a sheaf on $T_{Y}$, then
$$
(\gamma_{*}\mathcal F)_{X',V'} = ]\gamma'[_{*}\mathcal F_{Y',V'}.
$$
\end{prop}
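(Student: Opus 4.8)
The plan is to deduce the formula from Proposition \ref{cart} by producing an appropriate cartesian square of overconvergent presheaves. First I would identify the fibre product $(X',V')\times_{T}T_{Y}$. Unwinding the definition $T_{Y}=(Y/K)\times_{(X/K)}T$ and using associativity of fibre products, one gets
$$
(X',V')\times_{T}T_{Y} \;=\; (X',V')\times_{(X/K)}(Y/K),
$$
where the structural morphism $(X',V')\to X/K$ is the composite of $(X',V')\to T$ with $T\to X/K$, so that its underlying map of varieties is exactly the map $X'\to X$ used to define $\gamma':Y'\hookrightarrow X'$ as the inverse image of $Y$. Now $\gamma:Y\hookrightarrow X$ is a locally closed embedding, hence a monomorphism of algebraic varieties, so for any overconvergent variety $(X'',V'')$ a section of $(X',V')\times_{(X/K)}(Y/K)$ over $(X'',V'')$ is simply a morphism $(X'',V'')\to(X',V')$ whose composite to $X$ factors — automatically uniquely — through $Y$, i.e. whose underlying map to $X'$ factors through $Y'$; such morphisms correspond bijectively to morphisms $(X'',V'')\to(Y',V')$. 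This is precisely the computation already recorded in the examples following the Definition above (the case $T=(X',V')$), and it gives a canonical isomorphism
$$
(X',V')\times_{T}T_{Y}\;\simeq\;(Y',V')
$$
under which the projection to $(X',V')$ becomes $(\gamma',\mathrm{Id}_{V'})$ and the projection to $T_{Y}$ becomes the canonical map $(Y',V')\to T_{Y}$.

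Granting this, the square
$$
\xymatrix{
(Y',V') \ar[rr]^{(\gamma',\mathrm{Id}_{V'})} \ar[d] && (X',V') \ar[d] \\
T_{Y} \ar[rr]^{\gamma} && T
}
$$
is cartesian with representable upper arrow, so Proposition \ref{cart} applies with $v=\gamma$, $(f,u)=(\gamma',\mathrm{Id}_{V'})$ and $\mathcal F'=\mathcal F$. Since $]\gamma'[_{\mathrm{Id}_{V'}}$ is the map on tubes $]\gamma'[\,:\,]Y'[_{V'}\hookrightarrow\,]X'[_{V'}$ and the realization of $\mathcal F$ on $(Y',V')$ is $\mathcal F_{Y',V'}$, the conclusion of \ref{cart} reads precisely
$$
(\gamma_{*}\mathcal F)_{X',V'} \;=\; ]\gamma'[_{*}\,\mathcal F_{Y',V'},
$$
which is the asserted formula. (For $T=X/O$, resp. $T=X_{V}/O$, one has $T_{Y}=Y/O$, resp. $Y_{V}/O$, so the statement specializes accordingly.)

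The only genuine work is the identification of the fibre product, and within it the substantive point is the standard one that, $Y\hookrightarrow X$ being a monomorphism, lifting a map into $X$ along $Y\hookrightarrow X$ is a property rather than a structure, so the fibre product over $X/K$ collapses to the restriction $(Y',V')$. Everything past that is the formal base-change identity $s^{-1}v_{*}=(f,u)_{*}s'^{-1}$ already packaged in Proposition \ref{cart}. I would, however, be slightly careful to check that forming this restriction is compatible with the identifications used above when $(X',V')$ is a variety over a general presheaf $T$ rather than directly over $X/K$; this is exactly the place where one uses that the relevant structural map is the composite $(X',V')\to T\to X/K$, which is what makes "$Y'=$ inverse image of $Y$ in $X'$" unambiguous.
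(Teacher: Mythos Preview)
Your proof is correct and follows the same approach as the paper: identify the cartesian square with top row $(Y',V')\to(X',V')$ and then invoke Proposition~\ref{cart}. The paper compresses the identification of the fibre product into a citation of Corollary~2.4.15 of \cite{LeStum11}, whereas you spell it out directly (using the monomorphism property of $Y\hookrightarrow X$ and the example already recorded after the Definition), but the substance is identical.
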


\begin{proof}
Using corollary 2.4.15 of \cite{LeStum11}, this follows from proposition \ref{cart}.
\end{proof}

Since we will use it in some of our examples, we should also mention that $\mathrm R^i\gamma_{*} E = 0$ for $i > 0$ when $E$ is an isocrystal with coherent realizations.

We can work out very simple examples right now.
We will do our computations on the overconvergent variety 
$$
\mathbb P^1_{k/K} := \mathbb P^{1}_{k} \hookrightarrow \widehat {\mathbb P}^{1}_{\mathcal V} \leftarrow \mathbb P^{1,\mathrm{an}}_{K}.
$$
We consider first the open immersion $\alpha : \mathbb A^{1}_{k} \hookrightarrow \mathbb P^{1}_{k}$ and the structural sheaf $\mathcal O^\dagger_{\mathbb A^{1}_{k}/K}$.
If we let $i : \mathbb D(0, 1^+) \hookrightarrow \mathbb P^{1,\mathrm{an}}_{K}$ denote the inclusion map, we have
$$
\mathrm R\Gamma(\mathbb P^{1}_{k/K}, \alpha_{*} \mathcal O^\dagger_{\mathbb A^{1}_{k}/K}) = \mathrm R\Gamma(\mathbb P^{1,\mathrm{an}}_{K}, i_{*}i^{-1}\mathcal O_{\mathbb P^{1,\mathrm{an}}_{K}}) = K[t]^\dagger :=  \cup_{\lambda > 1} K\{t/\lambda\}
$$
(functions with radius of convergence (strictly) bigger than one at the origin).
 
On the other hand, if we start from the inclusion $\beta : \infty \hookrightarrow \mathbb P^{1}_{k}$ and let $j : \mathbb D(\infty, 1^-) \hookrightarrow \mathbb P^{1,\mathrm{an}}_{K}$ denote the inclusion map, we have
$$
\mathrm R\Gamma(\mathbb P^{1}_{k/K}, \beta_{*} \mathcal O^\dagger_{\infty/K}) = \mathrm R\Gamma(\mathbb P^{1,\mathrm{an}}_{K}, j_{*}j^{-1}\mathcal O_{\mathbb P^{1,\mathrm{an}}_{K}}) = K[1/t]^\mathrm{an} := \cap_{\lambda > 1} K\{\lambda/t\}
$$
(functions with radius of convergence at least one at infinity).

\begin{cor} \label{resex}
We have
\begin{enumerate}
\item $
\gamma_{\mathrm{An}^\dagger}^{-1} \circ \gamma_{\mathrm{An}^\dagger *} = \mathrm{Id},
$
\item
if $\gamma' : Y' \hookrightarrow X$ is another locally closed embedding with $Y \cap Y' = \emptyset$, then
$$
\gamma_{\mathrm{An}^\dagger}^{-1} \circ \gamma'_{\mathrm{An}^\dagger *} = 0.
$$
\end{enumerate}
\end{cor}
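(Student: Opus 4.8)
The plan is to reduce both assertions to Proposition \ref{direct} by testing on realizations. Two elementary facts are used repeatedly. First, a sheaf on an overconvergent presheaf is determined by its realizations together with its transition maps, so a morphism between two such sheaves is an isomorphism (resp.\ vanishes) as soon as it is so on every realization; and since inverse image along a morphism of presheaves is transitive, for an overconvergent variety $(X'',V'')$ over $T_Y$ the realization of $\gamma^{-1}\mathcal G$ on $(X'',V'')$ agrees with the realization $\mathcal G_{X'',V''}$ obtained by viewing $(X'',V'')$ as a variety over $T$ through $\gamma$. Second, from the very definition $T_Y := (Y/K)\times_{(X/K)}T$, a variety $(X'',V'')$ over $T_Y$ is the same thing as a variety over $T$ whose structural morphism $X''\to X$ factors through $Y$; hence the inverse image of $Y$ inside $X''$ is all of $X''$, whereas if $Y\cap Y'=\emptyset$ the inverse image of $Y'$ inside $X''$ is empty.

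For (1), let $\mathcal F$ be a sheaf on $T_Y$ and consider the counit $\gamma^{-1}\gamma_{*}\mathcal F\to\mathcal F$ of the adjunction $(\gamma^{-1},\gamma_{*})$. I would evaluate it on an arbitrary overconvergent variety $(X'',V'')$ over $T_Y$. By the first remark its source is $(\gamma_{*}\mathcal F)_{X'',V''}$, which by Proposition \ref{direct}, applied with $(X',V')=(X'',V'')$, equals $]\gamma''[_{*}\mathcal F_{Y'',V''}$, where $\gamma'':Y''\hookrightarrow X''$ is the inclusion of the inverse image of $Y$ in $X''$. By the second remark this inverse image is $X''$ itself, so $\gamma''$, and therefore $]\gamma''[$, is the identity and the source is simply $\mathcal F_{X'',V''}$; one checks that the counit becomes the identity under this identification. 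Since this holds on every realization, the counit is an isomorphism.

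For (2), let $\mathcal F'$ be a sheaf of abelian groups on $T_{Y'}$ and evaluate $\gamma^{-1}\gamma'_{*}\mathcal F'$ on an overconvergent variety $(X'',V'')$ over $T_Y$. Arguing as above, now with Proposition \ref{direct} applied to the embedding $\gamma'$ in place of $\gamma$, this realization is $]\gamma''[_{*}\mathcal F'_{Y'',V''}$, where $\gamma'':Y''\hookrightarrow X''$ is the inclusion of the inverse image of $Y'$ in $X''$. Since $Y\cap Y'=\emptyset$, that inverse image is empty, so $]Y''[_{V''}=\emptyset$ and the pushforward of any abelian sheaf from the empty tube is zero. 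Thus every realization of $\gamma^{-1}\gamma'_{*}\mathcal F'$ vanishes, whence $\gamma^{-1}\gamma'_{*}\mathcal F'=0$. I do not expect a genuine obstacle: the only points requiring care are to keep the base change $\gamma''$ of $\gamma$ (resp.\ $\gamma'$) to $X''$ distinct from $\gamma$ (resp.\ $\gamma'$) itself, and to note that compatibility of these identifications with the transition maps is automatic from the naturality already built into Proposition \ref{direct}; all the real content sits in that proposition and in the description of $T_Y$ as a fibered product.
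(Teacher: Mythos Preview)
Your proposal is correct and follows exactly the approach the paper intends: the corollary is stated without proof because it is meant to follow immediately from Proposition~\ref{direct} by evaluating on realizations, and you have spelled out precisely those details---identifying varieties over $T_Y$ with varieties over $T$ whose structural map factors through $Y$, so that the base-changed embedding $\gamma''$ is the identity in case~(1) and has empty source in case~(2).
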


Alternatively, one may say that if $\mathcal F$ is a sheaf on $T_{Y}$, we have
$$
(\gamma_{*}\mathcal F)_{|Y} = \mathcal F \quad \mathrm{and} \quad (\gamma_{*}\mathcal F)_{|Y'} = 0.
$$

The first assertion of the corollary means that $\gamma_{\mathrm{An}^\dagger}$ is an embedding of topos (direct image is fully faithful).
Actually, from the fact that $Y$ is a subobject of $X$ in the category of varieties, one easily deduces that $T_{Y}$ is a subobject of $T$ in the overconvergent topos and $\gamma_{\mathrm{An}^\dagger}$ is therefore an \emph{open} immersion of topos.
Note also that the second assertion applies in particular to open and closed complements (both ways): in particular, these functors \emph{cannot} be used to glue along open and closed complements.
We will need some refinement.

We focus now on the case of an \emph{open} immersion $\alpha : U \hookrightarrow X$ which gives rise to a closed embedding on the tubes.

\begin{prop}
The functor $\alpha_{\mathrm{An}^\dagger*} : T_{U\mathrm{An}^\dagger} \to T_{\mathrm{An}^\dagger}$ is exact and preserves isocrystals.
\end{prop}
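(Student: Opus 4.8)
The plan is to transport everything to the tubes by means of Proposition~\ref{direct}. The decisive structural fact is that, since $\alpha : U \hookrightarrow X$ is an \emph{open} immersion, the induced map $]\alpha[$ on tubes is a \emph{closed} immersion of analytic spaces, and pushforward along a closed immersion of topological spaces is exact and has good support properties. Note first that $\alpha_{\mathrm{An}^\dagger}$ is an open immersion of topos (Corollary~\ref{resex} and the ensuing remark), so $\alpha_{\mathrm{An}^\dagger *}$ is automatically left exact and only right exactness needs an argument.

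For exactness I would check the assertion on realizations, using that the realization functors $\mathcal F \mapsto \mathcal F_{X',V'}$, for $(X',V')$ running over the overconvergent varieties over $T$, form a jointly conservative family of \emph{exact} functors (part of the formalism of Section~\ref{bible}, these varieties generating the site). Writing $\alpha' : U' \hookrightarrow X'$ for the open inverse image of $U$ in $X'$ and $(U',V')$ for the corresponding variety over $T_U$, Proposition~\ref{direct} yields
$$
(\alpha_{\mathrm{An}^\dagger *}\mathcal F)_{X',V'} = ]\alpha'[_{*}\, \mathcal F_{U',V'}.
$$
Since $\alpha'$ is an open immersion, $]\alpha'[$ is the inclusion of a \emph{closed} analytic subdomain of $]X'[_{V'}$, so $]\alpha'[_{*}$ is exact; as $\mathcal F \mapsto \mathcal F_{U',V'}$ is exact as well, the composite $\mathcal F \mapsto (\alpha_{\mathrm{An}^\dagger *}\mathcal F)_{X',V'}$ is exact for every $(X',V')$, whence $\alpha_{\mathrm{An}^\dagger *}$ is exact.

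To see that $\alpha_{\mathrm{An}^\dagger *}$ preserves isocrystals, let $E$ be an isocrystal on $T_U$ and let $(f,u) : (X'',V'') \to (X',V')$ be a morphism of overconvergent varieties over $T$; write $\alpha'' : U'' \hookrightarrow X''$ for the inverse image of $U$. By functoriality of tubes, $]\alpha''[$ identifies $]U''[_{V''}$ with $]f[_{u}^{-1}(]U'[_{V'})$, which, $]U'[_{V'}$ being closed in $]X'[_{V'}$, is a \emph{closed} subset of $]X''[_{V''}$. Using the identity $i_{X'} \circ ]\alpha'[ = i_{U'}$ (so $i_{X'*}]\alpha'[_{*} = i_{U'*}$) one computes
$$
]f[_{u}^{\dagger}(\alpha_{\mathrm{An}^\dagger *}E)_{X',V'} = i_{X''}^{-1}u^{*}i_{U'*}E_{U',V'},
$$
a sheaf on $]X''[_{V''}$ supported on the closed subset $]U''[_{V''}$, hence canonically equal to $]\alpha''[_{*}$ of its restriction to $]U''[_{V''}$, namely to $]\alpha''[_{*}\bigl(]f|_{U''}[_{u}^{\dagger}E_{U',V'}\bigr)$. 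Under this identification the transition morphism of $\alpha_{\mathrm{An}^\dagger *}E$ attached to $(f,u)$ becomes $]\alpha''[_{*}$ applied to the transition morphism of $E$ attached to $(f|_{U''},u) : (U'',V'') \to (U',V')$; the latter is an isomorphism since $E$ is an isocrystal, hence so is the former, and $\alpha_{\mathrm{An}^\dagger *}E$ is an isocrystal.

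The routine points I would not dwell on are the ``formal'' compatibility of restriction with $\alpha_{\mathrm{An}^\dagger *}$ (in the style of the proof of Proposition~\ref{cart}) and the functoriality of the tube construction. The load-bearing inputs are that an open immersion of varieties induces a \emph{closed} immersion of tubes — responsible both for the exactness of $]\alpha'[_{*}$ and for the closedness of the support of $i_{X''}^{-1}u^{*}i_{U'*}E_{U',V'}$ — together with the bookkeeping in the displayed identities: one must keep $i_{X''}^{-1}$, $u^{*}$, $i_{U'*}$ and $]\alpha''[_{*}$ in the right order and carry out the support computation inside $]X''[_{V''}$, not inside $V''$ (where $]U''[_{V''}$ need not be closed). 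That support bookkeeping is where I expect the only real care to be needed.
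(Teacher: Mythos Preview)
Your argument is correct and is essentially the route the paper has in mind: the paper itself does not reprove the statement but refers to Corollary~3.1.12 and Proposition~3.3.15 of \cite{LeStum11}, where exactness and isocrystal-preservation are obtained (in the case $T=X/O$) precisely by computing realizations via Proposition~\ref{direct} and exploiting that $]\alpha'[$ is a closed embedding, so that $]\alpha'[_{*}$ is exact and the transition maps can be controlled by a support argument. Your care to run the support computation inside $]X''[_{V''}$ rather than $V''$ is exactly the right bookkeeping: since $]Z'[_{V'}\subset ]X'[_{V'}$ and $]U'[_{V'}$ is closed in $]X'[_{V'}$, one has $]Z'[_{V'}\cap\overline{]U'[_{V'}}^{V'}=]Z'[_{V'}\cap]U'[_{V'}=\emptyset$, which is what makes the stalk vanish.
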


\begin{proof}
This is not trivial but can be proved exactly as in Corollary 3.1.12 and Proposition 3.3.15 of \cite{LeStum11} (which is the case $T = X/O$).
\end{proof}
 
The following definition is related to rigid cohomology with compact support (recall that $\beta : Z \hookrightarrow X$ denotes the embedding of a closed complement of $U$):

\begin{dfn}
If $\mathcal F$ is a sheaf of abelian groups on $T$, then
$$
\underline \Gamma_{U}\mathcal F = \ker(\mathcal F \to \beta_{*}\mathcal F_{|Z})
$$
is the subsheaf of \emph{sections of $\mathcal F$ with support in $U$}.
\end{dfn}

If we denote by $\mathcal U$ the \emph{closed} subtopos of $T_{\mathrm{An}^\dagger}$ which is the complement of the open topos $T_{Z\mathrm{An}^\dagger}$, then $\underline \Gamma_{U}$ is the same thing as the functor $\mathcal H^0_{\mathcal U}$ of sections with support in $\mathcal U$.
With this in mind, the first two assertions of the next proposition below are completely formal.
One may also show that the functor $\mathcal F \mapsto \mathcal F/\beta_{!}\beta^{-1}\mathcal F$ is an exact left adjoint to $\underline \Gamma_{U}$ ; it follows that $\underline \Gamma_{U}$ preserves injectives.

Actually, we should use the open/closed formalism only in the classical situation.
Recall (see \cite{Iversen86}, section II.6 for example for these kinds of things) that if $i : W \hookrightarrow V$ is a closed embedding of topological spaces, then $i_{*}$ has a right adjoint $i^!$ (and one usually sets $\underline \Gamma_{W}:= i_{*}i^!$) which commutes with direct images.
If $(X, V)$ is an overconvergent variety, we know that $]\alpha[ : ]U[ \hookrightarrow ]X[$ is a closed embedding and we may therefore consider the functors $]\alpha[^{!}$ and $\underline \Gamma_{]U[}$.

\begin{prop} \label{gammu}
\begin{enumerate}
\item The functor $\underline \Gamma_{U}$ is left exact and preserves modules,
\item if $\mathcal F$ is a sheaf of abelian groups on $T$, then there exists a distinguished triangle
$$
\mathrm R \underline \Gamma_{U} \mathcal F \to \mathcal F \to \mathrm R \beta_{*}\mathcal F_{|Z} \to ,
$$
\item \label{locgam} if $(X', V')$ is a variety over $T$ and $\alpha' : U' \hookrightarrow X'$ denotes the immersion of the inverse image of $U$ into $X'$, we have
$$
(\mathrm R \underline \Gamma_{U}E)_{V'} = \mathrm R \underline \Gamma_{]U'[_{V'}}E_{V'}
$$
for any \emph{isocrystal} $E$ on $T$.
\end{enumerate}
\end{prop}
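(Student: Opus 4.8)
The plan is to obtain (1) and (2) quickly from the recollement formalism already in place, and to put the real work into (3).

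For (1), the morphism $\mathcal F \to \beta_{*}\mathcal F_{|Z}$ is the unit $\mathcal F \to \beta_{*}\beta^{-1}\mathcal F$, which is $\mathcal O^{\dagger}_{T}$-linear; since $\beta^{-1}$ and $\beta_{*}$ preserve modules, so does $\beta_{*}\beta^{-1}$, and hence so does $\underline\Gamma_{U}$, which assigns to a module the submodule $\ker(\mathcal F \to \beta_{*}\mathcal F_{|Z})$. Left exactness is immediate from the description, given just before the statement, of $\underline\Gamma_{U} = \mathcal H^{0}_{\mathcal U}$ as a right adjoint to the exact functor $\mathcal F \mapsto \mathcal F/\beta_{!}\beta^{-1}\mathcal F$ (a right adjoint commutes with limits). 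For (2), identify $\underline\Gamma_{U}$ with $i_{\mathcal U*}i_{\mathcal U}^{!}$, where $i_{\mathcal U}$ is the inclusion of the closed subtopos $\mathcal U$ complementary to the open immersion $\beta_{\mathrm{An}^\dagger}$ (open by the discussion following Corollary \ref{resex}). Since $\underline\Gamma_{U}$ preserves injectives, choosing an injective resolution $\mathcal F \to I^{\bullet}$ one has that $I \to \beta_{*}I_{|Z}$ is surjective for $I$ injective (as in any recollement), so $0 \to \underline\Gamma_{U}I^{\bullet} \to I^{\bullet} \to \beta_{*}(I^{\bullet})_{|Z} \to 0$ is exact; as $\beta^{-1}$ preserves injectives, $\beta_{*}(I^{\bullet})_{|Z}$ computes $\mathrm R\beta_{*}\mathcal F_{|Z}$, and the associated distinguished triangle is the one claimed. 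This is the ``completely formal'' part announced in the text.

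For (3), I would realize the triangle of (2) on $(X',V')$. Write $\alpha' : U' \hookrightarrow X'$ and $\beta' : Z' \hookrightarrow X'$ for the inverse images of $\alpha$ and $\beta$, so that inside $]X'[_{V'}$ the tube $]U'[_{V'}$ is closed with open complement $]\beta'[_{V'} : ]Z'[_{V'} \hookrightarrow ]X'[_{V'}$. Applying the derived realization functor $\mathrm R(-)_{V'}$ to
$$
\mathrm R\underline\Gamma_{U}E \to E \to \mathrm R\beta_{*}E_{|Z} \to ,
$$
using the derived form of Proposition \ref{direct} to rewrite $(\mathrm R\beta_{*}E_{|Z})_{V'}$ as $\mathrm R]\beta'[_{V'*}(E_{|Z})_{Z',V'}$, and using that $E$ is an \emph{isocrystal} to identify $(E_{|Z})_{Z',V'}$ with $]\beta'[_{V'}^{-1}E_{V'}$ — indeed the transition morphism \eqref{transm} along $(\beta', \mathrm{Id}_{V'}) : (Z',V') \to (X',V')$ is an isomorphism $i_{Z'}^{-1}i_{X'*}E_{V'} \simeq (E_{|Z})_{Z',V'}$, and $i_{Z'} = i_{X'}\circ\, ]\beta'[_{V'}$ forces $i_{Z'}^{-1}i_{X'*} = ]\beta'[_{V'}^{-1}$ — one obtains a distinguished triangle
$$
(\mathrm R\underline\Gamma_{U}E)_{V'} \to E_{V'} \to \mathrm R]\beta'[_{V'*}]\beta'[_{V'}^{-1}E_{V'} \to ,
$$
where $\mathrm R(-)_{V'}E$ is the sheaf $E_{V'}$ placed in degree $0$, the realization being cohomologically concentrated in degree $0$ on isocrystals as in the formalism recalled in Section \ref{bible}. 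On the other hand, the closed subset $]U'[_{V'}$ of the analytic variety $]X'[_{V'}$, with open complement $]\beta'[_{V'}$, produces the standard recollement triangle
$$
\mathrm R\underline\Gamma_{]U'[_{V'}}E_{V'} \to E_{V'} \to \mathrm R]\beta'[_{V'*}]\beta'[_{V'}^{-1}E_{V'} \to .
$$
These two triangles share their second and third terms, and in both cases the second arrow is the adjunction unit — matched through the naturality of realization and of Proposition \ref{direct}; hence the triangles are isomorphic, which gives the asserted identification $(\mathrm R\underline\Gamma_{U}E)_{V'} \simeq \mathrm R\underline\Gamma_{]U'[_{V'}}E_{V'}$.

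The step I expect to be the main obstacle is the derived form of Proposition \ref{direct}: that $\mathrm R(-)_{V'}$ commutes with $\mathrm R\beta_{*}$. This amounts to a base change statement for the cartesian square of Proposition \ref{cart} at the level of derived functors — concretely, that $s'^{-1}$ carries injectives to $(\beta',\mathrm{Id}_{V'})_{*}$-acyclic objects and that $\varphi_{V'*}$ is compatible with it — together with the degree-$0$ concentration of $\mathrm R\varphi_{V'*}$ on isocrystals; both should follow from the results of \cite{LeStum11} (corollaries 2.4.15 and 2.5.12), but must be spelled out carefully. The remaining point — upgrading the comparison of the two triangles to a canonical isomorphism, functorial in $E$, by tracking the naturality of all the maps involved — is routine.
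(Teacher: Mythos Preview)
Your approach is essentially the same as the paper's: parts (1) and (2) via the formal properties of $\beta^{-1}$, $\beta_{*}$, $\ker$ and the surjectivity of $\mathcal F \to \beta_{*}\mathcal F_{|Z}$ on injectives, and part (3) by comparing the realized triangle from (2) with the standard recollement triangle on $]X'[_{V'}$ after using the isocrystal hypothesis to identify $(E_{|Z})_{Z',V'} = ]\beta'[^{-1}E_{V'}$. The paper's own proof is terser and does not make explicit the derived form of Proposition~\ref{direct} that you flag as the main obstacle, so your writeup is if anything more careful about where the work lies.
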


\begin{proof}
The first assertion follows immediately from the fact that all the functors involved ($\beta^{-1}$, $\beta_{*}$ and $\ker$) do have these properties.
The second assertion results from the fact that the map $\mathcal F \to \beta_{*}\mathcal F_{|Z}$ is surjective when $\mathcal F$ is an injective sheaf (this is formal).
In order to prove the last assertion, it is sufficient to remember (this is a standard fact) that there exists a distinguished triangle
$$
\mathrm R \underline \Gamma_{]U'[_{V'}}E_{V'} \to E_{V'} \to \mathrm R ]\beta'[_{*}]\beta[^{-1}E_{V'} \to 
$$
where $\beta' : Z' \hookrightarrow X'$ denotes the inverse image of the inclusion of a closed complement of $U$.
 Since $E$ is an isocrystal, we have $(E_{|Z})_{Z',V'} = ]\beta'[^{-1}E_{X',V'}$.
\end{proof}

Note that the second assertion means that there exists an exact sequence
$$
0 \to \underline \Gamma_{U} \mathcal F \to \mathcal F \to \beta_{*}\mathcal F_{|Z} \to \mathrm R^1\underline \Gamma_{U} \mathcal F \to 0
$$
and that $\mathrm R^{i} \beta_{*}\mathcal F_{|Z} = \mathrm R^{i+1}\underline \Gamma_{U} \mathcal F$ for $i > 0$.

We can do the exercise with $\alpha : \mathbb A^{1}_{k} \hookrightarrow \mathbb P^{1}_{k}$ and $\beta : \infty \hookrightarrow \mathbb P^{1}_{k}$ as above.
We obtain
$$
\mathrm R\Gamma(\mathbb P^{1}_{k/K}, \mathrm R \underline \Gamma_{\mathbb A^1_{k}}\mathcal O^\dagger_{\mathbb P^{1}_{k}/K})
= [K \to K[1/t]^{\mathrm {an}}] 
 = (K[1/t]^{\mathrm {an}}/K)[-1].
$$

Since realization does not commute with the inverse image in general, we need to consider the following functor:

\begin{lem}
If $\mathcal F$ is a sheaf on $T$, then the assignment
$$
(X', V') \mapsto (j_{U}^\dagger \mathcal F)_{V'} := ]\alpha'[_{*}]\alpha'[^{-1}\mathcal F_{V'},
$$
where $\alpha' : U' \hookrightarrow X'$ denotes the immersion of the inverse image of $U$ into $X'$, defines a sheaf on $T$.
\end{lem}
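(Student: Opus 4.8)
The plan is to produce, out of the realization data of $\mathcal F$, the realization data of a sheaf on $T$. Observe first that $j_U^\dagger\mathcal F$ is \emph{not} simply the realization of $\alpha_*\alpha^{-1}\mathcal F$: by Proposition \ref{direct} the latter realizes on $(X',V')$ as $]\alpha'[_*(\mathcal F_{|U})_{U',V'}$, and $(\mathcal F_{|U})_{U',V'}$ differs from $]\alpha'[^{-1}\mathcal F_{V'}$ because realization does not commute with $\alpha^{-1}$; so the lemma genuinely requires an argument. Recall (see \cite{LeStum11}) that giving a sheaf on $T$ is the same as giving a sheaf on $]X'[_{V'}$ for each overconvergent variety $(X',V')$ over $T$, together with transition maps $]f[_u^{-1}(j_U^\dagger\mathcal F)_{V''}\to(j_U^\dagger\mathcal F)_{V'}$ for every morphism $(f,u):(X',V')\to(X'',V'')$ over $T$, compatible with composition, that are isomorphisms for strict neighborhoods, and that satisfy descent for the (standard) topology. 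The sheaf on $]X'[_{V'}$ is prescribed by the formula in the statement, so all that remains is to build the transition maps and verify these three properties, reducing each to the corresponding fact for $\mathcal F$.

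First I would construct the transition maps. Given $(f,u):(X',V')\to(X'',V'')$ over $T$, the induced map of tubes $]f[_u:]X'[_{V'}\to]X''[_{V''}$ carries $]U'[_{V'}$ into $]U''[_{V''}$ (as $f$ carries $U'$ into $U''$, these being the inverse images of $U$), and the resulting square with vertical maps the closed immersions $]\alpha'[_{V'}$ and $]\alpha''[_{V''}$ is cartesian: the tube of the inverse image of a subvariety is the inverse image of its tube (\cite{LeStum11}). Since $]\alpha''[_{V''}$ is a closed immersion of analytic spaces, base change along this cartesian square yields a canonical isomorphism $]f[_u^{-1}\,]\alpha''[_{*}\,]\alpha''[^{-1}\mathcal F_{V''}\simeq\,]\alpha'[_{*}\,]\alpha'[^{-1}\,]f[_u^{-1}\mathcal F_{V''}$; composing it with the image under $]\alpha'[_{*}\,]\alpha'[^{-1}$ of the transition map $]f[_u^{-1}\mathcal F_{V''}\to\mathcal F_{V'}$ of $\mathcal F$ gives the wanted map $]f[_u^{-1}(j_U^\dagger\mathcal F)_{V''}\to(j_U^\dagger\mathcal F)_{V'}$. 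Compatibility with composition is then the cocycle identity for base-change isomorphisms along closed immersions together with that for the transition maps of $\mathcal F$, which is a routine verification.

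It remains to check the last two properties. If $(f,u)$ is a strict neighborhood, then $]f[_u$ and its restriction $]f'[_u$ are homeomorphisms of tubes and the transition map of $\mathcal F$ is an isomorphism, hence so is the transition map of $j_U^\dagger\mathcal F$; the data therefore factors through $\mathrm{An}^\dagger_{/T}$. For descent, a covering $(X',V'_i)\to(X',V')$ in the standard pretopology is given by open immersions $V'_i\hookrightarrow V'$ with $]X'[_{V'}\subseteq\bigcup_i]X'[_{V'_i}$; restriction to the open subset $]X'[_{V'_i}$ commutes with $]\alpha'[_{*}$ and $]\alpha'[^{-1}$ (again by base change, now along a cartesian square with open horizontal maps), so $(j_U^\dagger\mathcal F)_{V'_i}$ is the restriction of $(j_U^\dagger\mathcal F)_{V'}$ to $]X'[_{V'_i}$; since $\mathcal F$ is a sheaf, $\mathcal F_{V'}$ is obtained by gluing the $\mathcal F_{V'_i}$, whence $(j_U^\dagger\mathcal F)_{V'}$ is obtained by gluing the $(j_U^\dagger\mathcal F)_{V'_i}$. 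This shows $j_U^\dagger\mathcal F$ is a sheaf on $T$.

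The only input that is not purely formal is that the functors $]\alpha'[_{*}$ and $]\alpha'[^{-1}$ commute with the pullbacks occurring above, and this is precisely where the hypothesis that $\alpha$ is an \emph{open} immersion is used: it makes $]\alpha'[_{V'}$ a \emph{closed} immersion of analytic spaces, along which base change holds unconditionally, and one needs besides only the compatibility of the tube construction with formal morphisms and with open immersions, recalled from \cite{LeStum11}. I expect this to be the single point deserving care; everything else is bookkeeping.
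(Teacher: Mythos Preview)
Your proposal is correct and follows essentially the same route as the paper: one builds the transition map by using the cartesian square of tubes and the base-change identity $]f[_u^{-1}\circ\,]\alpha''[_{*}=\,]\alpha'[_{*}\circ\,]g[_u^{-1}$ (valid because $]\alpha''[$ is a closed embedding), then composes with the image of the transition map of $\mathcal F$. The paper stops after constructing this map, whereas you go on to check the cocycle condition, invariance under strict neighborhoods, and descent for standard coverings; these verifications are indeed routine and the paper leaves them implicit.
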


\begin{proof}
We give ourselves a morphism $(f, u) : (X'', V'') \to (X', V')$ over $T$, we denote by $g : U'' \to U'$ the map induced by $f$ on the inverse images of $U$ into $X'$ and $X''$ respectively, and by $\alpha'' : U'' \hookrightarrow X''$ the inclusion map.
And we consider the cartesian diagram (forgetful functor to algebraic varieties is left exact)
$$
\xymatrix{(U'', V'') \ar@{^{(}->}[r] \ar[d]^{(g,u)}& (X'', V'') \ar[d]^{(f,u)} \\ (U', V') \ar@{^{(}->}[r] & (X', V')}
$$
which gives rise to a  cartesian diagram (tube is left exact)
$$
\xymatrix{]U''[_{V''} \ar@{^{(}->}[r] \ar[d]^{]g[_{u}}& ]X''[_{V''} \ar[d]^{]f[_{u}}\\ ]U'[_{V'} \ar@{^{(}->}[r] & ]X'[_{V'} }.
$$
Since $]\alpha'[$ is a closed embedding, we have $]f[_{u}^{-1} \circ ]\alpha'[_{*} = ]\alpha''[_{*} \circ ]g[_{u}^{-1}$ and there exists a canonical map
$$
]f[_{u}^{-1}]\alpha'[_{*}]\alpha'[^{-1}\mathcal F_{V'} = ]\alpha''[_{*}]g[_{u}^{-1}]\alpha'[^{-1}\mathcal F_{V'} = ]\alpha''[_{*}]\alpha''[^{-1}]f[_{u}^{-1}\mathcal F_{V''} \to ]\alpha''[_{*}]\alpha''[^{-1}\mathcal F_{V''}.\qedhere
$$
\end{proof}

\begin{dfn}
If $\mathcal F$ is a sheaf on $T$, then $j^\dagger_{U}\mathcal F$ is the sheaf of \emph{overconvergent sections of $\mathcal F$ around $U$}.
\end{dfn}

\begin{prop}
\begin{enumerate}
\item
The functor $j_{U}^\dagger$ is exact and preserves isocrystals,
\item
if $E$ is an isocrystal on $T$, we have $j_{U}^\dagger E = \alpha_{*}\alpha^{-1} E$.
\end{enumerate}
\end{prop}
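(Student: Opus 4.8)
The plan is to read everything off from a computation of realizations, establishing the second assertion first. Let $E$ be an isocrystal on $T$ and let $(X',V')$ be an arbitrary overconvergent variety over $T$, with $\alpha' : U' \hookrightarrow X'$ the inclusion of the inverse image of $U$. Applying proposition \ref{direct} to the sheaf $\alpha^{-1}E = E_{|U}$ on $T_{U}$ gives $(\alpha_{*}\alpha^{-1}E)_{X',V'} = ]\alpha'[_{*}(\alpha^{-1}E)_{U',V'}$. Since $(U',V')$ is the restriction of $(X',V')$ to $U$ and restriction is transitive, $(\alpha^{-1}E)_{U',V'}$ is the realization of $E$ on the variety $(U',V')$ over $T$; and the transition morphism $]\alpha'[^{-1}E_{X',V'} \to (\alpha^{-1}E)_{U',V'}$ attached to the morphism $(\alpha',\mathrm{Id}_{V'}) : (U',V') \to (X',V')$ over $T$ is an isomorphism, precisely because $E$ is an isocrystal. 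Hence $(\alpha_{*}\alpha^{-1}E)_{X',V'} = ]\alpha'[_{*}]\alpha'[^{-1}E_{X',V'} = (j^\dagger_{U}E)_{X',V'}$, and by naturality these identifications are compatible with the transition morphisms; as a sheaf on $T$ is determined by its realizations together with its transition morphisms, this gives $j^\dagger_{U}E = \alpha_{*}\alpha^{-1}E$.

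The first assertion follows from this. That $j^\dagger_{U}$ preserves isocrystals is immediate: if $E$ is an isocrystal then $j^\dagger_{U}E = \alpha_{*}\alpha^{-1}E$, where $\alpha^{-1}$ preserves isocrystals (section \ref{bible}) and $\alpha_{*}$ preserves isocrystals by the proposition above (recall that $\alpha$ is an open immersion). For exactness, the point is that the formula defining $j^\dagger_{U}$ is exact on realizations: for every $(X',V')$ over $T$ the functor $]\alpha'[^{-1}$ is exact, being an inverse image, and $]\alpha'[_{*}$ is exact because $]\alpha'[$ is a \emph{closed} embedding of tubes ($\alpha$ being an open immersion), so that it coincides with extension by zero. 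Moreover the formation of $j^\dagger_{U}$ commutes with restriction to any overconvergent variety over $T$, the inverse image of $U$ being taken fibrewise; since exactness of a sequence of modules on $T$ may be tested after such restrictions and, there, on realizations, we conclude that $j^\dagger_{U}$ is exact.

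The step I expect to be the main obstacle is this last reduction of exactness to the level of realizations: it rests on the description of the topos $T_{\mathrm{An}^\dagger}$ and of the realization morphisms from \cite{LeStum11} — the representable overconvergent varieties generate $T_{\mathrm{An}^\dagger}$, and the associated restriction and realization functors jointly detect exactness. By contrast, the second assertion and the preservation of isocrystals are formal once proposition \ref{direct}, the isocrystal condition, and the exactness and isocrystal-preservation of $\alpha_{*}$ are granted.
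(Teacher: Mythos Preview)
Your proof is correct and follows essentially the same approach as the paper: the paper also checks exactness on realizations (using that $]\alpha'[$ is a closed embedding so $]\alpha'[_{*}$ is exact), deduces the second assertion from the identity $(\alpha^{-1}E)_{V'} = ]\alpha'[^{-1}E_{V'}$ for $E$ an isocrystal, and obtains preservation of isocrystals as a consequence. Your write-up is more detailed (you spell out the use of proposition~\ref{direct} and the transition-map isomorphism), but the logical structure is the same.
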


\begin{proof}
Exactness can be checked on realizations.
But, if $(X',V')$ is a variety over $T$ and $\alpha' : U' \hookrightarrow X'$ denotes the immersion of the inverse image of $U$ in $X'$, then we know the exactness of $]\alpha'[_{*}$ (because $]\alpha'[$ is a closed embedding) and $]\alpha'[^{-1}$.
The second part of the first assertion is a consequence of the second assertion which follows from the fact that $(\alpha^{-1}E)_{V'} = ]\alpha'[^{-1} E_{V'}$ when $E$ is an isocrystal.
\end{proof}

Note that the canonical map $j_{U}^\dagger \mathcal F \to \alpha_{*}\alpha^{-1} \mathcal F$ is still bijective when $\mathcal F$ is a sheaf of \emph{Zariski type} (see Definition 4.6.1\footnote{the comment following definition 4.6.1 in \cite{LeStum11} is not correct and Lemma 4.6.2 is only valid for an \emph{open} immersion} of \cite{LeStum11}) but there are important concrete situations where equality fails as we shall see right now.

In order to exhibit a counter example, we let again $\alpha : \mathbb A^{1}_{k} \hookrightarrow \mathbb P^{1}_{k}$ and $\beta : \infty \hookrightarrow \mathbb P^{1}_{k}$ denote the inclusion maps and consider the sheaf
$
\mathcal F := \beta_{*}\mathcal O^\dagger_{\infty/K}
$
which is \emph{not} an isocrystal (and not even of Zariski type).
Since $\alpha^{-1} \circ \beta_{*} = 0$, we have $\alpha_{*}\alpha^{-1}\mathcal F = 0$.
Now, let us denote by $i_{\xi} : \xi \hookrightarrow \mathbb P^{1,\mathrm{an}}_{K}$ the inclusion of the generic point of the unit disc (corresponding to the Gauss norm) and let $i : \mathbb D(0, 1^+) \hookrightarrow \mathbb P^{1,\mathrm{an}}_{K}$ and $j : \mathbb D(\infty, 1^-) \hookrightarrow \mathbb P^{1,\mathrm{an}}_{K}$ be the inclusion maps as above.
Let
$$
\mathcal R := \left\{\sum_{n \in \mathbb Z} a_{n}t^n, \quad \left\{\begin{array}{c} \exists \lambda > 1, \lambda^na_{n} \to 0\ \mathrm{for}\ n \to + \infty \\ \forall \lambda > 1, \lambda^na_{n} \to 0 \ \mathrm{for}\ n \to -\infty \end{array} \right. \right\}
$$
be the \emph{Robba ring} (functions that converge on some open annulus of outer radius one at infinity).
Then, one easily sees that
$$
(j^\dagger_{\mathbb A^1_{k}}\beta_{*}\mathcal O_{\infty/K}^\dagger)_{\mathbb P^{1}_{k/K}} = i_{*}i^{-1}j_{*}\mathcal O_{\mathbb D(0, 1^-)} = i_{\xi*}\mathcal R
$$
so that $j^\dagger_{\mathbb A^1_{k}}\mathcal F \neq 0$.
This computation also shows that
$$
\mathrm R\Gamma(\mathbb P^{1}_{k/K}, j^\dagger_{\mathbb A^1_{k}}\beta_{*}\mathcal O^\dagger_{\infty/K}) = \mathcal R.
$$

We now turn to the study of the \emph{closed} embedding $\beta : Z \hookrightarrow X$ which requires some care (as we just experienced, the direct image of an isocrystal needs not be an isocrystal).

The following definition has to do with cohomology with support in a closed subset.

\begin{dfn}
For any sheaf of abelian groups $\mathcal F$ on $T$,
$$
\underline \Gamma^\dagger_{Z} \mathcal F := \ker\left(\mathcal F \to \alpha_{*}\mathcal F_{|U}\right)
$$
is the subsheaf of \emph{overconvergent sections of $\mathcal F$ with support in $Z$}.
\end{dfn}

We will do some examples below when we have more material at our disposal.

As above, if we denote by $\mathcal Z$ the closed subtopos of $T_{\mathrm{An}^\dagger}$ which is the complement of the open topos $T_{U\mathrm{An}^\dagger}$, then $\underline \Gamma^\dagger_{Z}$ is the same thing as the functor $\mathcal H^0_{\mathcal Z}$ of sections with support in $\mathcal Z$.
This is the approach taken by David Zureick-Brown in \cite{ZureickBrown10} and \cite{ZureickBrown14*} in order to define cohomology with support in $Z$ on the overconvergent site.
The next proposition is completely formal if one uses Zureick-Brown's approach.
Also, as above, one may prove that $\underline \Gamma^\dagger_{Z}$ preserves injectives because the functor $\mathcal F \mapsto \mathcal F/\alpha_{!}\alpha^{-1}\mathcal F$ is an exact left adjoint.

\begin{prop}\label{gamdag}
\begin{enumerate}
\item
The functor $\underline \Gamma^\dagger_{Z}$ is left exact and preserves modules.
\item
If $\mathcal F$ is an abelian sheaf on $T$, then there exists a  distinguished triangle
$$
0 \to \mathrm R\underline \Gamma^\dagger_{Z} \mathcal F \to \mathcal F \to \alpha_{*}\mathcal F_{|U} \to.
$$
\end{enumerate}
\end{prop}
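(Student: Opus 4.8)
The plan is to read off both assertions from the open/closed recollement attached to the open immersion of topos $\alpha_{\mathrm{An}^\dagger} : T_{U\mathrm{An}^\dagger} \hookrightarrow T_{\mathrm{An}^\dagger}$ and its complementary closed subtopos $\mathcal Z$; this is the situation of Proposition~\ref{gammu} with the roles of ``open'' and ``closed'' exchanged. As already noted before the definition of $\underline\Gamma^\dagger_Z$, that functor is nothing but $\mathcal H^0_{\mathcal Z}$, the functor of sections supported in $\mathcal Z$ in the sense of Zureick-Brown (\cite{ZureickBrown10}), so everything below is formal.

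For the first assertion I would copy the proof of Proposition~\ref{gammu}(1): by definition $\underline\Gamma^\dagger_Z\mathcal F = \ker(\mathcal F \to \alpha_*\alpha^{-1}\mathcal F)$, and here $\alpha^{-1}$ is exact and preserves modules, $\alpha_*$ is left exact (it is even exact, as shown above) and preserves modules, and formation of kernels is left exact and preserves modules; composing these facts gives that $\underline\Gamma^\dagger_Z$ is left exact and preserves modules.

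For the second assertion the one input that is not a tautology is that the adjunction map $\mathcal F \to \alpha_*\mathcal F_{|U}$ is an epimorphism whenever $\mathcal F$ is injective. This I would obtain by the very same formal argument as in the proof of Proposition~\ref{gammu}(2), applied now to the open immersion of topos $\alpha_{\mathrm{An}^\dagger}$ with closed complement $\mathcal Z$ rather than to $\beta_{\mathrm{An}^\dagger}$ with closed complement $\mathcal U$ (the cokernel of the map restricts to $0$ over $U$, hence lives on $\mathcal Z$, hence vanishes on an injective; alternatively, this reflects the exact left adjoint $\mathcal F \mapsto \mathcal F/\alpha_!\alpha^{-1}\mathcal F$ of $\underline\Gamma^\dagger_Z$ recorded above). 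Granting this, choosing an injective resolution $\mathcal F \to \mathcal I^\bullet$ gives a short exact sequence of complexes
$$
0 \to \underline\Gamma^\dagger_Z\mathcal I^\bullet \to \mathcal I^\bullet \to \alpha_*\alpha^{-1}\mathcal I^\bullet \to 0
$$
(degreewise exact, the surjectivity being the point just made and the rest being the definition of $\underline\Gamma^\dagger_Z$), hence a distinguished triangle in the derived category. Its first term computes $\mathrm R\underline\Gamma^\dagger_Z\mathcal F$ by definition; its last term computes $\alpha_*\mathcal F_{|U}$, because $\alpha^{-1}\mathcal F \to \alpha^{-1}\mathcal I^\bullet$ is a quasi-isomorphism ($\alpha^{-1}$ being exact) and $\alpha_*$ is exact, so that no higher direct images intervene. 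This is the triangle of the statement.

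I do not expect a genuine obstacle: the argument is pure recollement. The only things to keep track of are the preservation of modules in the first part, and — in the second part — the fact that it is precisely the exactness of $\alpha_{\mathrm{An}^\dagger*}$, already established, that allows the third term of the triangle to be written $\alpha_*\mathcal F_{|U}$ rather than $\mathrm R\alpha_*\mathcal F_{|U}$.
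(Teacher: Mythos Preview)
Your proposal is correct and follows essentially the same route as the paper: both assertions are read off from the open/closed formalism, the first by composing the properties of $\alpha^{-1}$, $\alpha_*$ and $\ker$, the second by showing the adjunction $\mathcal F \to \alpha_*\mathcal F_{|U}$ is epimorphic on injectives (the paper calls this ``standard'') and taking an injective resolution. The only cosmetic difference is that the paper phrases the identification of the third term via ``$\alpha_*$ and $\alpha^{-1}$ both preserve injectives (they both have an exact left adjoint)'', whereas you invoke directly the exactness of $\alpha_*$ established earlier; your remark that this exactness is precisely what lets one write $\alpha_*\mathcal F_{|U}$ rather than $\mathrm R\alpha_*\mathcal F_{|U}$ is a good point and is implicit in the paper's argument as well.
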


We will also show below that $\underline \Gamma^\dagger_{Z}$ preserves isocrystals.

\begin{proof}
As in the proof of proposition \ref{gammu}, the first assertion follows from the fact that all the functors involved (and the kernel as well) are left exact and preserve overconvergent modules.
And similarly the second one is a formal consequence of the definition because $\alpha_{*}$ and $\alpha^{-1}$ both preserve injectives (they both have an exact left adjoint) and the map $\mathcal F \to \alpha_{*}\mathcal F_{|U}$ is an epimorphism when $\mathcal F$ is injective (standard).
\end{proof}

Note that the last assertion of the proposition means that there exists an exact sequence
$$
0 \to \underline \Gamma^\dagger_{Z} \mathcal F \to \mathcal F \to \alpha_{*}\mathcal F_{|U} \to \mathrm R^1\underline \Gamma^\dagger _{Z} \mathcal F \to 0.
$$
and that $\mathrm R^{i}\underline \Gamma^\dagger_{Z} \mathcal F = 0$ for $i > 1$.

Before going any further, we want to stress out the fact that $\beta^{-1}$ has an adjoint $\beta_{!}$ on the left in the category of all modules (or abelian groups or even sets with a light modification) but $\beta_{!}$ does not preserve isocrystals in general.
Actually, we always have $(\beta_{!}\mathcal F)_{X',V'} = 0$ unless the morphism $X' \to X$ factors through $Z$ (recall that we use the coarse topology on the algebraic side).
Again, the workaround consists in working directly with the realizations.
If $j : W \hookrightarrow V$ is an open immersion of topological spaces, then $j^{-1}$ has an adjoint $j_{!}$ on the left also (on sheaves of abelian groups or sheaves of sets with a light modification).
This is an exact functor that commutes with inverse images (see \cite{Iversen86}, II.6 again).
Now, if $(X, V)$ is an overconvergent variety, then $]\beta[ : ]Z[ \hookrightarrow ]X[$ is an \emph{open} immersion and we may consider the functor $]\beta[_{!}$.

\begin{lem} \label{lmbdg}
If $\mathcal F$ is a sheaf (of sets or abelian groups) on $T_{Z}$, then the assignment
$$
(X', V') \mapsto (\beta_{\dagger} \mathcal F)_{X',V'} := ]\beta'[_{!} \mathcal F_{Z',V'},
$$
where $\beta' : Z' \hookrightarrow X'$ denotes the embedding of the inverse image of $Z$ into $X'$, defines a sheaf on $T$.
Moreover, if $E$ is an isocrystal on $T_{Z}$, then $\beta_{\dagger}E$ is an isocrystal on $T$.
\end{lem}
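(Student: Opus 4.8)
The plan is to imitate the proof of the preceding lemma (that $j^\dagger_U\mathcal F$ is a sheaf), replacing base change for direct images along closed immersions of tubes by base change for extension by zero along open immersions of tubes. For the first assertion I must produce, for every morphism $(f,u) : (X'', V'') \to (X', V')$ over $T$, a transition map $]f[_u^{-1}(\beta_\dagger\mathcal F)_{X',V'} \to (\beta_\dagger\mathcal F)_{X'',V''}$ satisfying the cocycle condition. Writing $\beta'' : Z'' \hookrightarrow X''$ for the embedding of the inverse image of $Z$ in $X''$ and $g : Z'' \to Z'$ for the map induced by $f$, the morphism $(f,u)$ restricts to a morphism $(g,u) : (Z'',V'') \to (Z',V')$ over $T_Z$, and the square
$$
\xymatrix{(Z'', V'') \ar@{^{(}->}[r] \ar[d]_{(g,u)} & (X'', V'') \ar[d]^{(f,u)} \\ (Z', V') \ar@{^{(}->}[r] & (X', V')}
$$
is cartesian (the forgetful functor to algebraic varieties is left exact), hence so is the induced square of tubes (the tube is left exact); in particular $]Z''[_{V''} = ]f[_u^{-1}(]Z'[_{V'})$. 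Since $]\beta'[$ is an open immersion, $]\beta'[_!$ commutes with inverse images (\cite{Iversen86}, II.6), so $]f[_u^{-1}\circ]\beta'[_! \simeq ]\beta''[_!\circ]g[_u^{-1}$; composing this isomorphism with $]\beta''[_!$ applied to the transition map $]g[_u^{-1}\mathcal F_{Z',V'}\to\mathcal F_{Z'',V''}$ of $\mathcal F$ (which $\mathcal F$ supplies because $(g,u)$ is a morphism over $T_Z$) gives the map we want. Its cocycle condition follows from that of $\mathcal F$ and the coherence of the base change isomorphisms, and the argument is identical for sheaves of sets. So $\beta_\dagger\mathcal F$ is a sheaf on $T$.

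For the second assertion, $\beta_\dagger$ preserves modules: $]\beta'[$ being an open immersion, $]\beta'[^{-1}i_{X'}^{-1}\mathcal O_{V'} = i_{Z'}^{-1}\mathcal O_{V'}$, so $]\beta'[_!$ carries $i_{Z'}^{-1}\mathcal O_{V'}$-modules to $i_{X'}^{-1}\mathcal O_{V'}$-modules, and the transition maps above are linear. Now let $E$ be an isocrystal on $T_Z$, so that its module transition maps $]g[_u^\dagger E_{Z',V'}\to E_{Z'',V''}$ are isomorphisms; it suffices to check that the module transition map of $\beta_\dagger E$ along $(f,u)$ is $]\beta''[_!$ applied to that of $E$ along $(g,u)$, for then, $]\beta''[_!$ being exact, it preserves this isomorphism and $\beta_\dagger E$ is an isocrystal on $T$. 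The module transition map of $\beta_\dagger E$ is the extension of scalars of its sheaf transition map; feeding in $]f[_u^{-1}]\beta'[_!\simeq]\beta''[_!]g[_u^{-1}$ and using the projection formula for the open immersion $]\beta''[$, this extension of scalars is identified with $]\beta''[_!(]g[_u^\dagger E_{Z',V'})$, whence the claim.

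The routine part is the bookkeeping, strictly parallel to the $j^\dagger_U$ case; the one point that needs care is this module-theoretic base change $]f[_u^\dagger\circ]\beta'[_!\simeq]\beta''[_!\circ]g[_u^\dagger$. The transition operation $]\cdot[^\dagger$ involves the direct images $i_{X'*}$, $i_{X''*}$ to the ambient analytic varieties and the module pullback $u^*$, not merely the topological pullback on tubes, so Iversen's base change does not apply verbatim; one must reduce to the topological sheaf-level base change — which is exactly the cartesianness of the square of tubes, $]Z''[_{V''}=]f[_u^{-1}(]Z'[_{V'})$ — and then reinstate the $\mathcal O$-module structure via the projection formula for the open immersion $]\beta''[$. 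No flatness hypothesis enters, precisely because $]\beta''[_!$ is exact and commutes with arbitrary base change on tubes.
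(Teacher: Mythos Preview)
Your proof is correct and follows essentially the same route as the paper's: both set up the cartesian square of tubes to get $]f[_u^{-1}\circ]\beta'[_!\simeq]\beta''[_!\circ]g[_u^{-1}$ (the paper writes $h$ where you write $g$), and for the isocrystal assertion both reduce the module-level transition to a projection-formula identity for the open immersion $]\beta''[$ --- the paper displays it explicitly as
\[
i_{X''}^{-1}\mathcal O_{V''}\otimes_{i_{X''}^{-1}u^{-1}\mathcal O_{V'}}]\beta''[_!\,]h[_u^{-1}E_{V'}
=]\beta''[_!\bigl(i_{Z''}^{-1}\mathcal O_{V''}\otimes_{i_{Z''}^{-1}u^{-1}\mathcal O_{V'}}]h[_u^{-1}E_{V'}\bigr)
\]
and calls it ``formal'', which is exactly your projection formula. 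Your added remarks on why $\beta_\dagger$ preserves modules and why the $i_{X*}$/$u^*$ in the definition of $]f[_u^\dagger$ do not spoil the base change are accurate and a bit more explicit than the paper.
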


\begin{proof}
As above, we consider a morphism $(f, u) : (X'', V'') \to (X', V')$ over $T$.
We denote by $h : Z'' \to Z'$ the map induced by $f$ on the inverse images of $Z$ into $X'$ and $X''$ respectively, and by $\beta'' : Z'' \hookrightarrow X''$ the inclusion map.
We have a cartesian diagram
$$
\xymatrix{(Z'', V'') \ar@{^{(}->}[r] \ar[d]^{(h,u)}& (X'', V'') \ar[d]^{(f,u)} \\ (Z', V') \ar@{^{(}->}[r] & (X', V')}
$$
giving rise to a cartesian diagram
$$
\xymatrix{]Z''[_{V''} \ar@{^{(}->}[r] \ar[d]^{]h[_{u}}& ]X''[_{V''} \ar[d]^{]f[_{u}}\\ ]Z'[_{V'} \ar@{^{(}->}[r] & ]X'[_{V'} }.
$$
It follows that there exists a canonical map
$$
]f[_{u}^{-1} ]\beta'[_{!} \mathcal F_{V'} = ]\beta''[_{!}  ]h[_{u}^{-1} \mathcal F_{V'} \to ]\beta''[_{!} \mathcal F_{V''}
$$
as asserted.
We consider now an isocrystal $E$ and we want to show that 
$$
]f[_{u}^{\dagger} ]\beta'[_{!} E_{V'} \simeq ]\beta''[_{!} E_{V''}.
$$
This immediately follows from the equality (which is formal)
$$
i_{X''}^{-1}\mathcal O_{V''} \otimes_{i_{X''}^{-1}u^{-1}\mathcal O_{V'}} ]\beta''[_{!}  ]h[_{u}^{-1} E_{V'} = ]\beta''[_{!} \left(i_{Z''}^{-1}\mathcal O_{V''} \otimes_{i_{Z''}^{-1}u^{-1}\mathcal O_{V'}}  ]h[_{u}^{-1} E_{V'}\right). \qedhere
$$
\end{proof}

\begin{dfn} $\beta_{\dagger} \mathcal F$ is the \emph{overconvergent direct image} of $\mathcal F$.
\end{dfn}

Note that there exists two flavors of $\beta_{\dagger}$: for sheaves of sets and for sheaves of abelian groups.
Whichever we consider should be clear from the context.

\begin{prop} \label{invim}
\begin{enumerate}
\item
If  $\mathcal F$ a sheaf on $T_{Z}$, then
\begin{enumerate}
\item
$(\beta_{\dagger} \mathcal F)_{|Z} = \mathcal F$,
\item $(\beta_{\dagger} \mathcal F)_{|U} = 0$,
\item if $E$ is an isocrystal on $T$, then
\begin{align} \label{omdag}
\mathcal Hom(\beta_{\dagger} \mathcal F, E) = \beta_{*}\mathcal Hom(\mathcal F, \beta^{-1}E),
\end{align}
\item
there exists a short exact sequence
\begin{align} \label{fonsec}
0 \to \beta_{\dagger} \mathcal F \to \beta_{*} \mathcal F \to j_{U}^\dagger \beta_{*} \mathcal F \to  0,
\end{align}
\end{enumerate}
\item
$\beta_{\dagger}$ is a fully faithful exact functor that preserve isocrystals, and the induced functor
$$
\beta_{\dagger} : \mathrm{Isoc}^\dagger(T_{Z}) \to \mathrm{Isoc}^\dagger(T)
$$
is left adjoint to 
$$
\beta^{-1} : \mathrm{Isoc}^\dagger(T) \to \mathrm{Isoc}^\dagger(T_{Z}).
$$
\end{enumerate}
\end{prop}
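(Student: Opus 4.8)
The plan is to check every assertion by passing to realizations. Fix throughout an overconvergent variety $(X',V')$ over $T$; write $Z'$ and $U'$ for the inverse images of $Z$ and $U$ in $X'$, and $\beta'\colon Z'\hookrightarrow X'$, $\alpha'\colon U'\hookrightarrow X'$ for the induced immersions, so that $]X'[_{V'}$ is the disjoint union of the open subset $]Z'[_{V'}$ and the closed subset $]U'[_{V'}$. I will use repeatedly that a module on $T$ (or on $T_{Z}$) is determined by its realizations and transition maps, that exactness of a sequence of modules can be tested on realizations, and the following classical facts about the open immersion $j:=\,]\beta'[$ with closed complement $i:=\,]\alpha'[$: the functor $j_{!}$ is exact and fully faithful (so $j^{-1}j_{!}=\mathrm{id}$), there is a functorial exact sequence $0\to j_{!}j^{-1}\mathcal H\to\mathcal H\to i_{*}i^{-1}\mathcal H\to 0$, and $\mathcal Hom(j_{!}\mathcal A,\mathcal B)=j_{*}\mathcal Hom(\mathcal A,j^{-1}\mathcal B)$.

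Assertions (a) and (b) are immediate: if $(X',V')$ lies over $T_{Z}$ then $Z'=X'$, hence $]\beta'[$ is the identity and $(\beta_{\dagger}\mathcal F)_{X',V'}=\mathcal F_{X',V'}$, which exhibits the unit $\mathcal F\to\beta^{-1}\beta_{\dagger}\mathcal F$ as an isomorphism; if $(X',V')$ lies over $T_{U}$ then $Z'=\emptyset$, hence $]Z'[_{V'}=\emptyset$ and $(\beta_{\dagger}\mathcal F)_{X',V'}=0$. For (d), Proposition \ref{direct} gives $(\beta_{*}\mathcal F)_{X',V'}=\,]\beta'[_{*}\mathcal F_{Z',V'}$; feeding $\mathcal H:=\,]\beta'[_{*}\mathcal F_{Z',V'}$ into the open/closed exact sequence and using $j^{-1}j_{*}=\mathrm{id}$ yields
$$
0\to\ ]\beta'[_{!}\mathcal F_{Z',V'}\ \to\ ]\beta'[_{*}\mathcal F_{Z',V'}\ \to\ ]\alpha'[_{*}]\alpha'[^{-1}(\beta_{*}\mathcal F)_{X',V'}\ \to 0,
$$
which is the realization on $(X',V')$ of $0\to\beta_{\dagger}\mathcal F\to\beta_{*}\mathcal F\to j^{\dagger}_{U}\beta_{*}\mathcal F\to 0$; as the maps are the canonical ones and exactness is a realization-local condition, (d) follows.

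For part (2): $\beta_{\dagger}$ is exact because $]\beta'[_{!}$ is exact, and it preserves isocrystals by Lemma \ref{lmbdg}. It is fully faithful because a morphism $\beta_{\dagger}\mathcal F\to\beta_{\dagger}\mathcal G$ is a compatible family $]\beta'[_{!}\mathcal F_{Z',V'}\to\ ]\beta'[_{!}\mathcal G_{Z',V'}$, which by full faithfulness of $]\beta'[_{!}$ is the same datum as a morphism $\mathcal F\to\mathcal G$ on $T_{Z}$. The adjunction is obtained in the same spirit: a morphism of isocrystals $\beta_{\dagger}\mathcal F\to E$ is a compatible family $]\beta'[_{!}\mathcal F_{Z',V'}\to E_{X',V'}$, which by the $(]\beta'[_{!},]\beta'[^{-1})$-adjunction corresponds to a compatible family $\mathcal F_{Z',V'}\to\ ]\beta'[^{-1}E_{X',V'}$; since $E$ is an isocrystal we have $]\beta'[^{-1}E_{X',V'}=(\beta^{-1}E)_{Z',V'}$, so such families are exactly the morphisms $\mathcal F\to\beta^{-1}E$ on $T_{Z}$, and functoriality of the bijection is clear. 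Finally (c): computing realizations and using $\mathcal Hom(]\beta'[_{!}\mathcal A,\mathcal B)=\,]\beta'[_{*}\mathcal Hom(\mathcal A,]\beta'[^{-1}\mathcal B)$ on the left, and Proposition \ref{direct} together with the isocrystal identity $(\beta^{-1}E)_{Z',V'}=\,]\beta'[^{-1}E_{X',V'}$ on the right, both sides of \eqref{omdag} realize on $(X',V')$ to $]\beta'[_{*}\mathcal Hom(\mathcal F_{Z',V'},]\beta'[^{-1}E_{X',V'})$.

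The point that requires genuine care is (c) (equivalently, the adjunction presented through internal Hom): in contrast with $\beta_{*}$, $\beta^{-1}$ or $\underline\Gamma^{\dagger}_{Z}$, the internal $\mathcal Hom$ does not commute with realization for an arbitrary source, so one must really exploit that $\beta_{\dagger}\mathcal F$ is obtained by extension by zero along the \emph{open} immersion $]\beta'[$ — which reduces the computation to the classical formula $\mathcal Hom(j_{!}\mathcal A,\mathcal B)=j_{*}\mathcal Hom(\mathcal A,j^{-1}\mathcal B)$ — and that $E$ is an isocrystal, which is precisely what makes $\beta^{-1}E$ have the expected realizations. Everything else amounts to bookkeeping with transition maps.
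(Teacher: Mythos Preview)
Your proof is correct and follows essentially the same route as the paper: every assertion is verified by passing to realizations on an arbitrary $(X',V')$ over $T$ and invoking the classical open/closed formalism for the pair $(]\beta'[,]\alpha'[)$. The only cosmetic difference is that the paper derives the adjunction in part (2) by taking global sections in the identity \eqref{omdag}, whereas you argue it directly via the realization-wise adjunction $(]\beta'[_{!},]\beta'[^{-1})$; these are equivalent.
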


\begin{proof}
As usual, if $(X',V')$ is a variety over $T$, then we denote by $\alpha' : U' \hookrightarrow X'$ and $\beta' : Z' \hookrightarrow X'$ the inclusions of the inverse images of $U$ and $Z$ respectively.
 
When $(X', V')$ is an overconvergent variety over $T_{Z}$, then we will have $]\beta'[ = \mathrm{Id}$, and when $(X', V')$ is an overconvergent variety over $T_{U}$ then $]\beta'[ = \emptyset$.
We obtain the first two assertions.
When $E$ is an isocrystal on $T$, we have an isomorphism (this is standard)
$$
\mathcal Hom(]\beta'[_{!} \mathcal F_{V'}, E_{V'}) = ]\beta'[_{*}\mathcal Hom(\mathcal F_{V'}, ]\beta'[^{-1}E_{V'}),
$$
from which the third assertion follows.
Also, there exists  a short exact sequence
$$
0 \to ]\beta'[_{!} \mathcal F_{Z',V'} \to ]\beta'[_{*} \mathcal F_{Z',V'} \to ]\alpha'[_{*}]\alpha'[^{-1}]\beta'[_{*} \mathcal F_{Z',V'} \to  0
$$
which provides the fourth assertion.

Full faithfulness and exactness of $\beta_{\dagger}$ follow from the full faithfulness and exactness of $]\beta'[_{!}$ for all $(X', V')$.
The fact that $\beta_{\dagger}$ preserves isocrystals was proved in lemma \ref{lmbdg}.
The last assertion may be obtained by taking global sections on the equality \eqref{omdag}.
\end{proof}

We can also mention that there exists a distinguished triangle
$$
\beta_{\dagger} \mathcal F \to \mathrm R \beta_{*} \mathcal F \to j_{U}^\dagger \mathrm R \beta_{*} \mathcal F \to.
$$

Now, we prove that the exact sequence \eqref{fonsec} is universal:

\begin{prop} \label{exthom}
If $\mathcal F'$ and $\mathcal F''$ are modules on $T_{Z}$ and $T_{U}$ respectively, then any extension
$$
0 \to \beta_{\dagger} \mathcal F' \to \mathcal F \to \alpha_{*} \mathcal F'' \to  0
$$
is a pull back of the fundamental extension \eqref{fonsec} through a unique morphism $\alpha_{*} \mathcal F'' \to j_{U}^\dagger \beta_{*} \mathcal F'$.
\end{prop}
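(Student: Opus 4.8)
The plan is to exhibit the pullback presentation explicitly, the crux being the construction by adjunction of a morphism $\mathcal F\to\beta_*\mathcal F'$. Write the given extension as $0\to\beta_\dagger\mathcal F'\xrightarrow{\iota}\mathcal F\xrightarrow{\pi}\alpha_*\mathcal F''\to0$ in $\mathcal O^\dagger_T\mathrm{-Mod}$. First I would apply the exact functor $\beta^{-1}=(-)_{|Z}$ to it. By Proposition \ref{invim} we have $(\beta_\dagger\mathcal F')_{|Z}=\mathcal F'$, and by Corollary \ref{resex} (applied to the disjoint embeddings $Z\hookrightarrow X\hookleftarrow U$) we have $(\alpha_*\mathcal F'')_{|Z}=\beta^{-1}\alpha_*\mathcal F''=0$; hence $\beta^{-1}\iota:\mathcal F'\xrightarrow{\ \sim\ }\mathcal F_{|Z}$ is an isomorphism. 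Using the adjunction $\beta^{-1}\dashv\beta_*$ on modules, the inverse isomorphism $\mathcal F_{|Z}\xrightarrow{\sim}\mathcal F'$ then corresponds to a morphism $r:\mathcal F\to\beta_*\mathcal F'$. A short computation with the adjunction shows that $r\circ\iota$ is the canonical morphism $\beta_\dagger\mathcal F'\to\beta_*\mathcal F'$ occurring in the fundamental sequence \eqref{fonsec}: both correspond to $\mathrm{id}_{\mathcal F'}$ under $\mathrm{Hom}(\beta_\dagger\mathcal F',\beta_*\mathcal F')\simeq\mathrm{Hom}((\beta_\dagger\mathcal F')_{|Z},\mathcal F')=\mathrm{Hom}(\mathcal F',\mathcal F')$.

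Next I would compare the two extensions. Let $q:\beta_*\mathcal F'\to j_U^\dagger\beta_*\mathcal F'$ be the cokernel map from \eqref{fonsec}. Since $r\circ\iota$ is the kernel inclusion of $q$, the map $r$ induces on cokernels a morphism $\theta:\alpha_*\mathcal F''\to j_U^\dagger\beta_*\mathcal F'$ with $q\circ r=\theta\circ\pi$. Then $(r,\pi):\mathcal F\to\beta_*\mathcal F'\times_{j_U^\dagger\beta_*\mathcal F'}\alpha_*\mathcal F''$ is a morphism from $\mathcal F$ to the pullback of \eqref{fonsec} along $\theta$. Because $q$ is an epimorphism, that pullback fits in a short exact sequence $0\to\beta_\dagger\mathcal F'\to\beta_*\mathcal F'\times_{j_U^\dagger\beta_*\mathcal F'}\alpha_*\mathcal F''\to\alpha_*\mathcal F''\to0$, and $(r,\pi)$ is a morphism of short exact sequences which is the identity on $\beta_\dagger\mathcal F'$ and on $\alpha_*\mathcal F''$; by the five lemma it is an isomorphism. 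This exhibits $\mathcal F$ as the pullback of \eqref{fonsec} along $\theta$.

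For uniqueness, suppose $\mathcal F$ is also the pullback of \eqref{fonsec} along some $\theta'$, compatibly with the extension structures. Composing with the first projection yields a morphism $r':\mathcal F\to\beta_*\mathcal F'$ whose restriction along $\iota$ is again the canonical morphism $\beta_\dagger\mathcal F'\to\beta_*\mathcal F'$. But a morphism $\mathcal F\to\beta_*\mathcal F'$ with that restriction is unique: under $\beta^{-1}\dashv\beta_*$ it corresponds to a morphism $\mathcal F_{|Z}\to\mathcal F'$ whose precomposition with the isomorphism $\beta^{-1}\iota$ is $\mathrm{id}_{\mathcal F'}$, hence equals $(\beta^{-1}\iota)^{-1}$; so $r'=r$. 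Then $\theta'\circ\pi=q\circ r'=q\circ r=\theta\circ\pi$, and since $\pi$ is an epimorphism $\theta'=\theta$.

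The only step that is not formal diagram-chasing in the abelian category $\mathcal O^\dagger_T\mathrm{-Mod}$ is the vanishing $\beta^{-1}\alpha_*\mathcal F''=0$, i.e.\ Corollary \ref{resex}, which is what makes $\beta^{-1}$ of the extension degenerate to an isomorphism; the second mildly delicate point is matching the adjunction units to identify $r\circ\iota$ with the map of \eqref{fonsec}. Everything else — the construction of $\theta$, the passage to the pullback, and uniqueness — follows from the five lemma and the universal property of pullbacks. Equivalently, applying $\mathrm{Hom}(\alpha_*\mathcal F'',-)$ to \eqref{fonsec}, the vanishing gives $\mathrm{Hom}(\alpha_*\mathcal F'',\beta_*\mathcal F')=0$, while $r$ shows that the pushout of any such extension along $\beta_\dagger\mathcal F'\to\beta_*\mathcal F'$ splits, so the connecting map $\mathrm{Hom}(\alpha_*\mathcal F'',j_U^\dagger\beta_*\mathcal F')\to\mathrm{Ext}^1(\alpha_*\mathcal F'',\beta_\dagger\mathcal F')$ is bijective.
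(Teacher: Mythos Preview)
Your argument is correct and complete. It differs from the paper's in method: the paper proves the isomorphism $\mathrm{Hom}(\alpha_*\mathcal F'', j_U^\dagger\beta_*\mathcal F') \simeq \mathrm{Ext}(\alpha_*\mathcal F'', \beta_\dagger\mathcal F')$ cohomologically, by observing that $\mathrm R\mathrm{Hom}(\alpha_*\mathcal F'', \mathrm R\beta_*\mathcal F') = 0$ (derived adjunction together with $\beta^{-1}\alpha_*\mathcal F'' = 0$), hence $\mathrm R^i\mathrm{Hom}(\alpha_*\mathcal F'', \beta_*\mathcal F') = 0$ for $i \leq 1$, and then reading off the connecting isomorphism from the long exact sequence attached to \eqref{fonsec}. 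You instead construct the inverse of the connecting map by hand: the adjunction $\beta^{-1}\dashv\beta_*$ produces the retraction $r:\mathcal F\to\beta_*\mathcal F'$, which both exhibits the pullback explicitly and establishes uniqueness without ever passing to derived functors. Your final paragraph correctly identifies that the two arguments agree once translated: your $r$ is precisely a splitting of the pushout along $\beta_\dagger\mathcal F'\to\beta_*\mathcal F'$, which is the content of $\mathrm{Ext}^1(\alpha_*\mathcal F'',\beta_*\mathcal F')=0$. The advantage of your route is that it stays entirely within the abelian category and avoids justifying the derived adjunction; the advantage of the paper's route is brevity and that it makes the stronger vanishing $\mathrm R^i\mathrm{Hom}(\alpha_*\mathcal F'',\beta_*\mathcal F')=0$ visible for all $i$.
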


\begin{proof}
We know that $\beta^{-1}\alpha_{*}\mathcal F'' = 0$ and it follows that
$$
\mathrm{Hom}(\alpha_{*}\mathcal F'', \beta_{*}\mathcal F') = \mathrm{Hom}(\beta^{-1}\alpha_{*}\mathcal F'', \mathcal F') = 0.
$$
This being true for any sheaves, we see that actually, we have $\mathrm R\mathrm{Hom}(\alpha_{*}\mathcal F'', \mathrm R\beta_{*}\mathcal F') = 0$.
It formally follows that that $\mathrm R^{i}\mathrm{Hom}(\alpha_{*}\mathcal F'', \beta_{*}\mathcal F') = 0$ for $i \leq 1$.
As a consequence, we obtain a canonical isomorphism
\begin{align} \label{fdis}
\mathrm{Hom}(\alpha_{*}\mathcal F'', j_{U}^\dagger\beta_{*}\mathcal F') \simeq \mathrm{Ext}(\alpha_{*}\mathcal F'', \beta_{\dagger}\mathcal F').
\end{align}
This is exactly the content of our assertion.
\end{proof}

We should observe that we always have $\mathrm{Hom}(\alpha_{*}\mathcal F'', \beta_{\dagger}\mathcal F') = 0$.
 However, it is \emph{not} true that $\mathrm{Ext}(\alpha_{*}\mathcal F'', \beta_{\dagger}\mathcal F') = 0$ in general.
This can happen because $\beta_{\dagger}$ does not preserve injectives (although it is exact).

The overconvergent direct image is related to overconvergent support as follows:

\begin{prop} \label{isex}
If $E$ is an isocrystal on $T$, then
$$
\underline \Gamma^\dagger_{Z} E = \beta_{\dagger} E_{|Z}
$$
and, for all $i > 0$, $\mathrm R^i \underline \Gamma^\dagger_{Z} E =0$.
\end{prop}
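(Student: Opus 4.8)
The plan is to reduce the statement to the standard recollement exact sequence on the analytic tubes. First note that, by the remark following Proposition~\ref{gamdag}, for \emph{any} abelian sheaf $\mathcal F$ on $T$ there is an exact sequence
$$
0 \to \underline\Gamma^\dagger_{Z}\mathcal F \to \mathcal F \to \alpha_{*}\mathcal F_{|U} \to \mathrm R^1\underline\Gamma^\dagger_{Z}\mathcal F \to 0
$$
and $\mathrm R^i\underline\Gamma^\dagger_{Z}\mathcal F = 0$ for $i > 1$. So for the isocrystal $E$ it is enough to produce a natural map $\beta_{\dagger}E_{|Z} \to E$ fitting into a short exact sequence
$$
0 \to \beta_{\dagger}E_{|Z} \to E \to \alpha_{*}E_{|U} \to 0;
$$
this simultaneously identifies $\underline\Gamma^\dagger_{Z}E = \ker(E \to \alpha_{*}E_{|U})$ with $\beta_{\dagger}E_{|Z}$ (which is an isocrystal by Lemma~\ref{lmbdg}, giving the promised fact that $\underline\Gamma^\dagger_Z$ preserves isocrystals) and yields $\mathrm R^1\underline\Gamma^\dagger_{Z}E = \mathrm{coker}(E \to \alpha_{*}E_{|U}) = 0$.

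For the map $\beta_{\dagger}E_{|Z} \to E$ I would take the counit of the adjunction $\beta_{\dagger} \dashv \beta^{-1}$ of Proposition~\ref{invim}, and for $E \to \alpha_{*}E_{|U}$ the canonical map $E \to \alpha_{*}\alpha^{-1}E$; their composite is zero. Exactness of a sequence of overconvergent sheaves can be tested on realizations, the realization functors being exact and, jointly, conservative. So let $(X',V')$ be a variety over $T$, with $\alpha' : U' \hookrightarrow X'$ and $\beta' : Z' \hookrightarrow X'$ the inverse images of $U$ and $Z$. Since $E$ is an isocrystal, $(E_{|U})_{X',V'} = ]\alpha'[^{-1}E_{X',V'}$ and $(E_{|Z})_{X',V'} = ]\beta'[^{-1}E_{X',V'}$, so Proposition~\ref{direct} and Lemma~\ref{lmbdg} give
$$
(\alpha_{*}E_{|U})_{X',V'} = ]\alpha'[_{*}]\alpha'[^{-1}E_{X',V'}, \qquad (\beta_{\dagger}E_{|Z})_{X',V'} = ]\beta'[_{!}]\beta'[^{-1}E_{X',V'}.
$$
On $]X'[_{V'}$ the map $]\alpha'[$ is a closed immersion and $]\beta'[$ an open immersion onto its complement, so the displayed sequence on $T$ realizes on $(X',V')$ to the usual recollement exact sequence
$$
0 \to ]\beta'[_{!}]\beta'[^{-1}E_{X',V'} \to E_{X',V'} \to ]\alpha'[_{*}]\alpha'[^{-1}E_{X',V'} \to 0,
$$
where one uses that the overconvergent counit realizes to the ordinary ``extension by zero'' injection. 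As this holds for every realization, the sequence on $T$ is exact, which finishes the proof.

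The only points that need care — and where the argument really lives — are the two soft compatibilities just used: that exactness descends from realizations (exactness and joint conservativity of the realization functors, from \cite{LeStum11}), and that the counit of $\beta_{\dagger} \dashv \beta^{-1}$ is computed on each tube $]X'[_{V'}$ by the standard counit of $]\beta'[_{!} \dashv ]\beta'[^{-1}$ for the open immersion $]\beta'[$. Both are immediate from the way $\beta_{\dagger}$ (Lemma~\ref{lmbdg}, equation~\eqref{omdag}) and the realizations are constructed, so there is no genuine obstacle beyond this bookkeeping.
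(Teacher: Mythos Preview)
Your proof is correct and follows essentially the same route as the paper: both start from the four-term exact sequence of Proposition~\ref{gamdag}, then check on each realization $(X',V')$ that the map $E_{V'} \to (\alpha_{*}E_{|U})_{V'} = ]\alpha'[_{*}]\alpha'[^{-1}E_{V'}$ is surjective with kernel $]\beta'[_{!}]\beta'[^{-1}E_{V'} = (\beta_{\dagger}E_{|Z})_{V'}$, using the standard recollement sequence on the tube. The only cosmetic difference is that you name the map $\beta_{\dagger}E_{|Z} \to E$ as the adjunction counit from Proposition~\ref{invim} and then verify it realizes to the usual inclusion, whereas the paper simply identifies the kernel of $E_{V'} \to ]\alpha'[_{*}]\alpha'[^{-1}E_{V'}$ directly; either way the identification of $\underline\Gamma^\dagger_Z E$ with $\beta_{\dagger}E_{|Z}$ is as subsheaves of $E$, so the compatibility with transition maps is automatic.
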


\begin{proof}
Recall from proposition \ref{gamdag} that there exists an exact sequence
$$
0 \to \underline \Gamma^\dagger_{Z} E \to E \to \alpha_{*}E_{|U} \to \mathrm R^1\underline \Gamma^\dagger _{Z} E \to 0
$$
and that $\mathrm R^i \underline \Gamma^\dagger_{Z} E =0$ for $i > 1$.
Now, let $(X', V')$ be a variety over $T$. Denote by $\beta' : Z' \hookrightarrow X', \alpha' : U' \hookrightarrow X'$ the embeddings of the inverse images of $Z$ and $U$ into $X'$.
There exists a short exact sequence (standard again)
$$
0 \to ]\beta'[_{!}]\beta'[^{-1}E_{V'} \to E_{V'} \to ]\alpha'[_{*}]\alpha'[^{-1}E_{V'} \to 0.
$$
Since $E$ is an isocrystal, we have $(\alpha_{*}E_{|U})_{V'} = ]\alpha'[_{*}]\alpha'[^{-1}E_{V'}$.
It follows that $(\mathrm R^1 \underline \Gamma^\dagger_{Z} E)_{V'} =0$ and we also see that
$$
(\underline \Gamma^\dagger_{Z} E)_{V'} = ]\beta'[_{!}]\beta'[^{-1} E_{V'} = (\beta_{\dagger} E_{|Z})_{V'}. \qedhere
$$
\end{proof}

Note that the proposition is still valid for sheaves of Zariski type and not merely for isocrystals.
Be careful however that $\beta_{\dagger} E \neq \underline \Gamma^\dagger_{Z} \beta_{*}E$ in general even when $E$ is an isocrystal on $T_{Z}$.
With our favorite example in mind, we have $\underline \Gamma^\dagger_{Z} \beta_{*}\mathcal O^\dagger_{\infty/K} = \beta_{*}\mathcal O^\dagger_{\infty/K} \neq \beta_{\dagger}\mathcal O^\dagger_{\infty/K}$ as our computations below will show.

\begin{cor} \label{exact}
The functor $\underline \Gamma^\dagger_{Z}$ preserves isocrystals and the induced functor
$$
\underline \Gamma^\dagger_{Z} : \mathrm{Isoc}^\dagger(T) \to \mathrm{Isoc}^\dagger(T)
$$
is exact.
Moreover, if $E$ is an isocrystal on $T$, then there exists a short exact sequence
$$
0 \to \underline \Gamma^\dagger_{Z} E \to E \to j^\dagger_{U}E \to 0.
$$
\end{cor}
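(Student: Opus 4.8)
The plan is to deduce everything from Proposition \ref{isex}, which for an isocrystal $E$ identifies $\underline\Gamma^\dagger_Z E$ with $\beta_\dagger E_{|Z}$ and provides the vanishing of $\mathrm R^i\underline\Gamma^\dagger_Z E$ for $i>0$. First I would check that $\underline\Gamma^\dagger_Z$ preserves isocrystals: if $E$ is an isocrystal on $T$, then $E_{|Z}=\beta^{-1}E$ is an isocrystal on $T_Z$ because $\beta^{-1}$ preserves isocrystals, and $\beta_\dagger E_{|Z}$ is then an isocrystal on $T$ by Lemma \ref{lmbdg} (equivalently, by the second part of Proposition \ref{invim}); since $\underline\Gamma^\dagger_Z E=\beta_\dagger E_{|Z}$ by Proposition \ref{isex}, the first assertion follows.

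For exactness of the induced functor $\underline\Gamma^\dagger_Z:\mathrm{Isoc}^\dagger(T)\to\mathrm{Isoc}^\dagger(T)$, I would argue that on isocrystals it coincides with the composite $\beta_\dagger\circ\beta^{-1}$, in which $\beta^{-1}$ is exact (being the inverse image functor of a morphism of topos) and $\beta_\dagger$ is exact by Proposition \ref{invim}; hence the composite is exact. Equivalently, applying $\mathrm R\underline\Gamma^\dagger_Z$ to a short exact sequence of isocrystals, the associated long exact sequence coming from Proposition \ref{gamdag} together with the vanishing of $\mathrm R^1\underline\Gamma^\dagger_Z$ on isocrystals (Proposition \ref{isex}) upgrades left exactness to exactness. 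The one point that deserves a word of care is that $\underline\Gamma^\dagger_Z$ is only left exact on arbitrary modules and the identification with $\beta_\dagger\beta^{-1}$ holds only on isocrystals, so the argument must be kept inside the subcategory of isocrystals throughout; this is bookkeeping rather than a real obstacle.

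Finally, for the short exact sequence $0\to\underline\Gamma^\dagger_Z E\to E\to j^\dagger_U E\to 0$, I would start from the four-term exact sequence $0\to\underline\Gamma^\dagger_Z E\to E\to\alpha_*E_{|U}\to\mathrm R^1\underline\Gamma^\dagger_Z E\to 0$ recorded just after Proposition \ref{gamdag}, kill the last term using Proposition \ref{isex}, and rewrite $\alpha_*E_{|U}=\alpha_*\alpha^{-1}E=j^\dagger_U E$ via the computation of $j^\dagger_U$ on isocrystals established just after the definition of $j^\dagger_U$. Since the statement is essentially a repackaging of results already proved, I do not anticipate any genuine difficulty; the only thing to remain vigilant about is the passage between the module-level functors and their restrictions to the category of isocrystals.
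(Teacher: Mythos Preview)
Your proof is correct and follows precisely the approach the paper has in mind: the corollary is stated without proof in the paper, as an immediate consequence of Proposition \ref{isex} (the identification $\underline\Gamma^\dagger_Z E=\beta_\dagger E_{|Z}$ and the vanishing of $\mathrm R^1\underline\Gamma^\dagger_Z$ on isocrystals) together with the exactness of $\beta_\dagger$ and $\beta^{-1}$ and the identification $j^\dagger_U E=\alpha_*E_{|U}$ for isocrystals. Your bookkeeping remarks about staying inside the category of isocrystals are apt and match the paper's implicit reasoning.
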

 
We might as well write this last short exact sequence as
 $$
0 \to \beta_{\dagger}E_{|Z} \to E \to \alpha_{*}E_{|U} \to 0.
$$

As promised above, we can do an example and consider the closed embedding $\beta : \infty \hookrightarrow \mathbb P^{1}_{k}$ again.
We compute $\beta_{\dagger}\mathcal O^\dagger_{\infty/K} = \underline \Gamma^\dagger_{\infty} \mathcal O^\dagger_{\mathbb P^1_{k}/K}$.
We have
$$
\mathrm R\Gamma(\mathbb P^{1}_{k/K}, \beta_{\dagger}\mathcal O^\dagger_{\infty/K}) = [K \to K[t]^\dagger] = \left(K[t]^\dagger/K\right)[-1].
$$
We can also remark that the (long) exact sequence obtained by applying $\mathrm R\Gamma(\mathbb P^{1}_{k/K}, -)$ to the fundamental short exact sequence
$$
0 \to \beta_{\dagger}\mathcal O^\dagger_{\infty/K} \to \beta_{*} \mathcal O^\dagger_{\infty/K} \to j_{U}^\dagger \beta_{*} \mathcal O^\dagger_{\infty/K} \to  0
$$
reads
\begin{align} \label{Robseq}
0 \to K[1/t]^{\mathrm{an}} \to \mathcal R \to K[t]^\dagger/K \to 0.
\end{align}

\begin{cor} \label{eqemb}
\begin{enumerate}
\item
The functors $\alpha_{*}$ and $\alpha^{-1}$ induce an equivalence between isocrystals on $T_{U}$ and isocrystals on $T$ such that $\underline \Gamma^\dagger_{Z}E = 0$ (or $j^\dagger_{U}E= E$).
\item
The functors $\beta_{\dagger}$ and $\beta^{-1}$ induce an equivalence between isocrystals on $T_{Z}$ and isocrystals on $T$ such that $\underline \Gamma^\dagger_{Z}E = E$ (or $j^\dagger_{U}E=0$).
\end{enumerate}
\end{cor}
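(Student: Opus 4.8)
The plan is to deduce both statements formally from the machinery already in place, so no new geometric input is needed. The ingredients I will invoke are: $\alpha_{*}$ and $\alpha^{-1}$ preserve isocrystals (the proposition on $\alpha_{\mathrm{An}^\dagger*}$) and $\beta_{\dagger}$, $\beta^{-1}$ preserve isocrystals (Lemma \ref{lmbdg} and Proposition \ref{invim}); the identities $\alpha^{-1}\alpha_{*}=\mathrm{Id}$ and $\beta^{-1}\beta_{\dagger}=\mathrm{Id}$ (Corollary \ref{resex}(1) and Proposition \ref{invim}(1)); the formula $\underline\Gamma^\dagger_{Z}E=\beta_{\dagger}(E_{|Z})$ for an isocrystal $E$ on $T$ (Proposition \ref{isex}); the identity $j^\dagger_{U}E=\alpha_{*}(E_{|U})=\alpha_{*}\alpha^{-1}E$ for an isocrystal $E$ (the proposition on $j^\dagger_{U}$); and the short exact sequence $0\to\underline\Gamma^\dagger_{Z}E\to E\to j^\dagger_{U}E\to 0$ of Corollary \ref{exact}. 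This last sequence already yields the parenthetical reformulations, since $\underline\Gamma^\dagger_{Z}E=0$ forces $E\xrightarrow{\sim}j^\dagger_{U}E$ and, conversely, vanishing of $j^\dagger_{U}E$ forces $E=\underline\Gamma^\dagger_{Z}E$, and symmetrically.

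For part (1), I first check that $\alpha_{*}$ lands in the full subcategory $\mathcal C$ of isocrystals $E$ on $T$ with $\underline\Gamma^\dagger_{Z}E=0$: if $E'$ is an isocrystal on $T_{U}$, then $\alpha_{*}E'$ is an isocrystal, hence $\underline\Gamma^\dagger_{Z}(\alpha_{*}E')=\beta_{\dagger}((\alpha_{*}E')_{|Z})=\beta_{\dagger}(\beta^{-1}\alpha_{*}E')=0$ by Corollary \ref{resex}(2), since $U\cap Z=\emptyset$. Then $\alpha^{-1}\alpha_{*}=\mathrm{Id}$ gives one composite, and for $E\in\mathcal C$ the short exact sequence collapses to the canonical (unit) isomorphism $E\xrightarrow{\sim}j^\dagger_{U}E=\alpha_{*}\alpha^{-1}E$, giving the other; hence $\alpha_{*}$ and $\alpha^{-1}$ are quasi-inverse equivalences between $\mathrm{Isoc}^\dagger(T_{U})$ and $\mathcal C$.

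For part (2), symmetrically, $\beta_{\dagger}$ lands in the full subcategory $\mathcal D$ of isocrystals $E$ with $\underline\Gamma^\dagger_{Z}E=E$: for $E'$ an isocrystal on $T_{Z}$, Proposition \ref{isex} gives $\underline\Gamma^\dagger_{Z}(\beta_{\dagger}E')=\beta_{\dagger}((\beta_{\dagger}E')_{|Z})=\beta_{\dagger}E'$ by Proposition \ref{invim}(1a). Then $\beta^{-1}\beta_{\dagger}=\mathrm{Id}$ gives one composite, and for $E\in\mathcal D$ the formula of Proposition \ref{isex} reads $E=\underline\Gamma^\dagger_{Z}E=\beta_{\dagger}(E_{|Z})=\beta_{\dagger}\beta^{-1}E$, which is the counit isomorphism; hence $\beta_{\dagger}$ and $\beta^{-1}$ are quasi-inverse equivalences between $\mathrm{Isoc}^\dagger(T_{Z})$ and $\mathcal D$. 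I do not expect any genuine obstacle: the statement is a formal recollement-type consequence of the earlier propositions, and the only point requiring a moment's care is that the isomorphisms identifying the two round-trip composites with identities are precisely the adjunction unit and counit (so that they are natural in $E$), which is immediate from the cited results.
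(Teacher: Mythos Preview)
Your proof is correct and follows essentially the same approach as the paper's own proof: both use Proposition \ref{isex} to identify $\underline\Gamma^\dagger_{Z}E$ with $\beta_{\dagger}\beta^{-1}E$, the identity $j^\dagger_{U}E=\alpha_{*}\alpha^{-1}E$, and the identities $\alpha^{-1}\alpha_{*}=\mathrm{Id}$, $\beta^{-1}\beta_{\dagger}=\mathrm{Id}$ to verify the two round-trip composites. Your write-up is slightly more explicit about naturality and the adjunction unit/counit, but the logical content is identical.
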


\begin{proof}
If $E''$ is an isocrystal on $T_{U}$, then $\alpha_{*}E''$ is an isocrystal on $T$ and therefore $\underline \Gamma^\dagger_{Z}\alpha_{*}E'' = \beta_{\dagger}\beta^{-1} \alpha_{*} E'' = 0$. And conversely, if $E$ is an isocrystal on $T$ such that $\underline \Gamma^\dagger_{Z}E = 0$, then $E = j^\dagger_{U}E = \alpha_{*}\alpha^{-1}E$.
This shows the first part.

Now, if $E'$ is an isocrystal on $T_{Z}$, then $\beta_{\dagger}E'$ is an isocrystal on $T$ and therefore $\underline \Gamma^\dagger_{Z}\beta_{\dagger}E' = \beta_{\dagger}\beta^{-1}\beta_{\dagger} E' = \beta_{\dagger} E'$. And conversely, if $E$ is an isocrystal on $T$ such that $\underline \Gamma^\dagger_{Z}E = E$, then $E = \beta_{\dagger}\beta^{-1}E$.
\end{proof}

We can also make the functor of sections with support in an open subset come back into the picture:

\begin{cor}
If $E$ is an isocrystal on $T$, then there exists a distinguished triangle
$$
\mathrm R \underline \Gamma_{U} E \to j^\dagger_{U} E \to  j^\dagger_{U} \mathrm R \beta_{*}E_{|Z} \to.
$$
\end{cor}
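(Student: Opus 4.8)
The plan is to apply the functor $j^\dagger_U$ to the distinguished triangle of the second assertion of Proposition \ref{gammu}. Since $j^\dagger_U$ is exact we have $\mathrm R j^\dagger_U = j^\dagger_U$, and an exact functor between abelian categories carries distinguished triangles to distinguished triangles; so applying $j^\dagger_U$ to
$$
\mathrm R \underline \Gamma_{U} E \to E \to \mathrm R \beta_{*}E_{|Z} \to
$$
produces a distinguished triangle
$$
j^\dagger_U \mathrm R \underline \Gamma_{U} E \to j^\dagger_U E \to j^\dagger_U \mathrm R \beta_{*}E_{|Z} \to.
$$
Thus the whole point is to identify $j^\dagger_U \mathrm R \underline \Gamma_{U} E$ with $\mathrm R \underline \Gamma_{U} E$.

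For that I would compare realizations, using that a (complex of) sheaves on $T$ is determined by its realizations together with their transition maps. Fix a variety $(X', V')$ over $T$, let $\alpha' : U' \hookrightarrow X'$ be the inverse image of $\alpha$, and recall that $]\alpha'[$ is then a \emph{closed} immersion of tubes, so that $\underline \Gamma_{]U'[_{V'}} = ]\alpha'[_{*}]\alpha'[^{!}$ and $]\alpha'[^{-1}]\alpha'[_{*} = \mathrm{Id}$. By the last assertion of Proposition \ref{gammu} (working with an injective resolution $E \to \mathcal I^\bullet$, so that everything is computed termwise),
$$
(\mathrm R \underline \Gamma_{U} E)_{V'} = \mathrm R \underline \Gamma_{]U'[_{V'}} E_{V'} = ]\alpha'[_{*}]\alpha'[^{!} E_{V'},
$$
which is in particular of the form $]\alpha'[_{*}(-)$. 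On the other hand, since $j^\dagger_U$ is exact, its realization on $(X', V')$ is $]\alpha'[_{*}]\alpha'[^{-1}(-)$ applied termwise, whence
$$
(j^\dagger_U \mathrm R \underline \Gamma_{U} E)_{V'} = ]\alpha'[_{*}]\alpha'[^{-1}]\alpha'[_{*}]\alpha'[^{!} E_{V'} = ]\alpha'[_{*}]\alpha'[^{!} E_{V'} = (\mathrm R \underline \Gamma_{U} E)_{V'}.
$$
A routine verification that this identification is compatible with the transition morphisms then yields $j^\dagger_U \mathrm R \underline \Gamma_{U} E = \mathrm R \underline \Gamma_{U} E$, and the triangle above becomes the one in the statement.

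I expect the only point requiring care is the derived-category bookkeeping: one must fix a genuine complex representing $\mathrm R \underline \Gamma_{U} E$ (such as $\underline \Gamma_{U}\mathcal I^\bullet$) to which the exact functors $j^\dagger_U$, $]\alpha'[_{*}$, $]\alpha'[^{-1}$ and the realization functors all apply termwise, so that the displayed equalities make sense in the derived category of sheaves on $]X'[_{V'}$ and assemble functorially in $(X', V')$. Everything else is formal given Proposition \ref{gammu} and the exactness of $j^\dagger_U$.
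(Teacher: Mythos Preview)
Your argument is sound in outline and reaches the same conclusion, but the route is genuinely different from the paper's, and the ``derived-category bookkeeping'' you flag is a bit more substantial than you suggest.

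The paper does not apply $j^\dagger_U$ to the triangle of Proposition~\ref{gammu} at all. Instead it builds a $3{\times}3$ diagram of distinguished triangles: the middle row is the triangle of Proposition~\ref{gammu}(2); the middle column is the short exact sequence $0\to\underline\Gamma^\dagger_Z E\to E\to j^\dagger_U E\to 0$ of Corollary~\ref{exact}; the right column is the triangle $\beta_\dagger E_{|Z}\to\mathrm R\beta_*E_{|Z}\to j^\dagger_U\mathrm R\beta_*E_{|Z}\to$ noted after Proposition~\ref{invim} (using $\underline\Gamma^\dagger_Z E=\beta_\dagger E_{|Z}$). The top row is then $0\to\underline\Gamma^\dagger_Z E\xrightarrow{=}\underline\Gamma^\dagger_Z E$, so an octahedral/$3{\times}3$ argument forces the bottom row to be distinguished with first term $\mathrm R\underline\Gamma_U E$. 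This avoids any realization-level computation with complexes.

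Your approach hinges on the identity $j^\dagger_U\,\mathrm R\underline\Gamma_U E\simeq \mathrm R\underline\Gamma_U E$, which you check on realizations. The delicate point is that Proposition~\ref{gammu}(3) is a \emph{derived} identification $(\mathrm R\underline\Gamma_U E)_{V'}\simeq ]\alpha'[_*\,\mathrm R]\alpha'[^{!}E_{V'}$, not a termwise equality of complexes, while the formula $(j^\dagger_U\mathcal F)_{V'}=]\alpha'[_*]\alpha'[^{-1}\mathcal F_{V'}$ is only given sheafwise. To splice these together you need to know that the realization functor $(\,\cdot\,)_{V'}$ behaves well enough on complexes (either that it is exact, or that it commutes in the derived sense with $j^\dagger_U$); the paper does not state this, and it does not follow merely from the exactness of $j^\dagger_U$. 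Likewise, concluding that a map in $D(T)$ is an isomorphism from the fact that it is one on each $(\,\cdot\,)_{V'}$ requires exactness (or at least conservativity on cohomology) of realization. None of this is unfixable, but it is more than routine bookkeeping; the paper's $3{\times}3$ argument is chosen precisely to stay inside $D(T)$ and sidestep these issues.
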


\begin{proof}
There exists actually a commutative diagram of distinguished triangles:
$$
\xymatrix{
& \underline \Gamma^\dagger_{Z} E \ar@{=}[r] \ar[d] & \underline \Gamma^\dagger_{Z} E \ar[d]
\\
\mathrm R \underline \Gamma_{U} E \ar[r] \ar@{=}[d] & E \ar[r] \ar[d] &  \mathrm R \beta_{*}E_{|Z} \ar[r] \ar[d] &
\\
\mathrm R \underline \Gamma_{U} E \ar[r] & j^\dagger_{U} E \ar[r] \ar[d] &  j^\dagger_{U} \mathrm R \beta_{*}E_{|Z} \ar[r] \ar[d] &.
\\
&&&
}
$$
More precisely, we know that the vertical triangles as well as the middle horizontal one are all distinguished.
The bottom one must be distinguished too.
\end{proof}

Back to our running example, we see that the long exact sequence obtained by applying $\mathrm R\Gamma(\mathbb P^{1}_{k/K}, -)$ to the distinguished triangle
$$
\mathrm R \underline \Gamma_{\mathbb A^1_{k}} \mathcal O^\dagger_{\mathbb P^1_{k}/K} \to j^\dagger_{\mathbb A^1_{k}} \mathcal O^\dagger_{\mathbb P^1_{k}/K} \to  j^\dagger_{\mathbb A^1_{k}} \mathrm R \beta_{*}\mathcal O^\dagger_{\infty/K} \to 
$$
reads
$$
0 \to K[t]^\dagger \to \mathcal R \to  K[1/t]^{\mathrm{an}}/K \to0.
$$

We can summarize the situation as follows:

\begin{enumerate}
\item There exists two triples of adjoint functors (up means left) :
$$
\xymatrix{
\mathcal O^\dagger_{T_{U}}\mathrm{-Mod}  \ar@<.5cm>@{^{(}->}[rr]^{\alpha_{!}} \ar@<-.5cm>@{^{(}->}[rr]^{\alpha_{*}} && \mathcal O^\dagger_{T}\mathrm{-Mod} \ar[ll]_-{\alpha^{-1}} \ar[rr]^-{\beta^{-1}} && \mathcal O^\dagger_{T_{Z}}\mathrm{-Mod}. \ar@<-.5cm>@{_{(}->}[ll]_{\beta{!}} \ar@<.5cm>@{_{(}->}[ll]_{\beta{*}}
}
$$
Moreover, $\alpha_{*}$ is \emph{exact} and \emph{preserves isocrystals} (and so do $\alpha^{-1}$ and $\beta^{-1}$).
\item There exists two functors with support (that preserve injectives)
$$
\xymatrix{
\underline \Gamma_{U} & \ar@(ul,dl)[] &  \mathcal O^\dagger_{T}\mathrm{-Mod} & \ar@(ur,dr)[] & \underline \Gamma^\dagger_{Z}.
}
$$
Moreover, $\Gamma^\dagger_{Z}$ \emph{preserves isocrystals} and is \emph{exact on isocrystals}.
\item There exists two other functors
$$
\xymatrix{
j^\dagger_{U} & \ar@(ul,dl)[] & \mathcal O^\dagger_{T}\mathrm{-Mod} &&  \mathcal O^\dagger_{T_{Z}}\mathrm{-Mod}. \ar@{_{(}->}[ll]_{\beta_{\dagger}}
}
$$
They are both \emph{exact} and \emph{preserve isocrystals} (but not injectives).
 If $E$ is an isocrystal on $T$, we have
 $$
 j^\dagger_{U}E = \alpha_{*} E_{|U} \quad \mathrm {and} \quad \underline \Gamma^\dagger_{Z} E = \beta_{\dagger}E_{|Z}.
 $$
\end{enumerate}

\section{Constructibility}

Recall that $K$ denotes a complete ultrametric field with ring of integers $\mathcal V$ and residue field $k$.
We let $X$ be an algebraic variety over $k$ and $T$ a (overconvergent) presheaf over $X/K$.
Roughly speaking, $T$ is some family of varieties $X'$ over $X$ which embed into a formal $\mathcal V$-scheme $P'$, together with a morphism of analytic $K$-varieties $V' \to P'_{K}$. 
A \emph{(overconvergent) module} $\mathcal F$ on $T$ is then a compatible family of $i_{X'}^{-1}\mathcal O_{V'}$-modules $\mathcal F_{V'}$, where $i_{X'} : ]X'[_{V'} \hookrightarrow V'$ denotes the inclusion of the tube (the reader is redirected to section \ref{bible} for the details).

\begin{dfn}
A module $\mathcal F$ on $T$ is said to be \emph{constructible} (with respect to $X$) if there exists a locally finite covering of $X$ by locally closed subvarieties $Y$ such that $\mathcal F_{|Y}$ is locally finitely presented.
\end{dfn}
 
Recall that a locally finitely presented module is the same thing as an isocrystal with coherent realizations.
It is important to notice however that a constructible module is \emph{not} necessarily an isocrystal (the transition maps might not be bijective).
We'll give an example later.

\begin{prop}
\begin{enumerate}
\item
Constructible modules on $T$ form an additive category which is stable under cokernel, extension, tensor product and internal Hom.
\item
Constructible isocrystals on $T$ form an additive category $\mathrm{Isoc}^\dagger_{\mathrm{cons}}(T)$ which is stable under cokernel, extensions and tensor product.
\end{enumerate}
\end{prop}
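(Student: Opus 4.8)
The plan is to verify each closure property by reducing to the corresponding statement for locally finitely presented modules (equivalently, isocrystals with coherent realizations), which is classical, and then checking that constructibility is preserved under the operation by a common-refinement argument on the stratifications. The key observation throughout is that all the operations involved --- cokernel, extension, tensor product, internal Hom --- commute with restriction to a locally closed subvariety $Y$; indeed, restriction $\mathcal F \mapsto \mathcal F_{|Y} = \gamma^{-1}\mathcal F$ is an inverse image functor, hence exact and monoidal, and by Corollary \ref{resex} it also behaves well with respect to the subvarieties in a covering. So if $\mathcal F$ and $\mathcal G$ are constructible, say with coverings $(Y_i)$ and $(Z_j)$ respectively trivializing them (in the sense that the restrictions are locally finitely presented), then the common refinement $(Y_i \cap Z_j)$ is again a locally finite covering of $X$ by locally closed subvarieties, and on each piece both $\mathcal F$ and $\mathcal G$ restrict to locally finitely presented modules.

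First I would treat the additive structure: constructible modules are visibly stable under finite direct sums (refine the two coverings; a finite direct sum of locally finitely presented modules is locally finitely presented), and under kernels of zero maps etc., so they form an additive subcategory of $\mathcal O^\dagger_T\text{-Mod}$. Next, for the operations listed, on each piece $W := Y_i \cap Z_j$ of the common refinement I invoke the fact that locally finitely presented modules (i.e. coherent isocrystals, by the characterization recalled before Definition of constructibility) are stable under cokernel, extension, tensor product and internal Hom over $\mathcal O^\dagger_{T_W}$; this is a local statement on the realizations $]W'[_{V'}$, where it reduces to the classical stability properties of coherent $\mathcal O$-modules on an analytic variety together with the compatibility of the transition maps. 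For the isocrystal part, one additionally needs that cokernel, extension and tensor product of isocrystals are isocrystals: this follows because the transition maps \eqref{transm} are right exact (being of the form $i_{X'}^{-1}u^*i_{X*}(-)$ composed with an inverse-image-type functor), so a cokernel of a map of isocrystals has bijective transition maps, and a tensor product likewise; for extensions one uses the five lemma on the transition maps. Note that internal Hom is dropped from the isocrystal statement precisely because $\mathcal{H}om$ need not have bijective transition maps when the modules are not coherent, which is why constructible isocrystals are only claimed stable under the other three operations.

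The main obstacle I expect is the extension case. Given a short exact sequence $0 \to \mathcal F' \to \mathcal F \to \mathcal F'' \to 0$ with $\mathcal F'$, $\mathcal F''$ constructible, one must produce a single locally finite covering of $X$ trivializing $\mathcal F$. Taking the common refinement of coverings for $\mathcal F'$ and $\mathcal F''$ gives pieces $W$ on which $\mathcal F'_{|W}$ and $\mathcal F''_{|W}$ are locally finitely presented; since restriction is exact, $0 \to \mathcal F'_{|W} \to \mathcal F_{|W} \to \mathcal F''_{|W} \to 0$ is still exact, and one needs that an extension of locally finitely presented modules is locally finitely presented --- this is where a little care is required, since $\mathcal O^\dagger_T$ is not coherent, but on realizations the extension sits between two coherent $\mathcal O_V$-modules and is therefore itself coherent, and the transition maps are forced to be isomorphisms by the five lemma, so $\mathcal F_{|W}$ is again an isocrystal with coherent realizations. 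For the isocrystal statement one must also confirm that $\mathcal F$ itself (not just its restrictions) is an isocrystal, which again is the five-lemma argument applied to the transition maps of the ambient objects. Once these local verifications are in place, the global conclusion is immediate from the definition, and one records that $\mathrm{Isoc}^\dagger_{\mathrm{cons}}(T)$ is by construction a full subcategory of $\mathrm{Isoc}^\dagger(T)$.
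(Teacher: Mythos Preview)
Your proposal is correct and follows essentially the same approach as the paper: reduce to the locally finitely presented (resp.\ isocrystal) case by using that restriction $\mathcal F \mapsto \mathcal F_{|Y}$ is exact and commutes with tensor product and internal Hom, together with a common refinement of the stratifications. The paper's proof is terser, outsourcing the base cases to \cite{LeStum11} (proposition 3.3.12 for internal Hom of locally finitely presented modules, corollary 3.3.9 for stability of isocrystals under cokernel, extension and tensor product), whereas you spell out the five-lemma argument for the transition maps yourself; but the strategy is the same.
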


\begin{proof}
The analog to the first assertion for locally finitely presented modules is completely formal besides the internal Hom question that was proved in proposition 3.3.12 of \cite{LeStum11}.
The analog to the second assertion for all isocrystals was proved in corollary 3.3.9 of \cite{LeStum11}.
Since the restriction maps $\mathcal F \mapsto \mathcal F_{|Y}$ are exact and commute with tensor product and internal Hom, everything follows.
\end{proof}

Note however that $\mathcal Hom(E_{1}, E_{2})$ needs \emph{not} be an isocrystal (see example below) when $E_{1}$ and $E_{2}$ are two constructible isocrystals.
 
\begin{prop} \label{firstprop}
 Let $\mathcal F$ be a module on $T$.
\begin{enumerate}
\item \label{equiv} $\mathcal F$ is constructible if and only if there exists a locally finite covering by locally closed subvarieties $Y$ of $X$ such that $\mathcal F_{|Y}$ is constructible.
\item
If $T' \to T$ is any morphism of overconvergent presheaves and $\mathcal F$ is constructible, then $\mathcal F_{|T'}$ is constructible.
The converse also is true if $T' \to T$ is a covering.
\item \label{astre}Assume that $T$ is actually a presheaf on $X'/K$ for some $f : X' \to X$.
If $\mathcal F$ is constructible with respect to $X$, then it is also constructible with respect to $X'$.
\end{enumerate}
\end{prop}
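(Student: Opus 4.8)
All three assertions are essentially formal, and the plan is to reduce each of them to the corresponding, already known, statement for locally finitely presented modules, keeping careful track of the restriction functors $\mathcal F \mapsto \mathcal F_{|Y}$ and $\mathcal F \mapsto \mathcal F_{|T'}$ and of the elementary identity $(\mathcal F_{|Y})_{|Z} = \mathcal F_{|Z}$, which comes from the fact that restriction is inverse image along a composite of maps of presheaves. For the first assertion, one direction is trivial: a locally finitely presented module is in particular constructible, so a covering witnessing the definition of constructibility of $\mathcal F$ is already a covering of the kind required. Conversely, if $\{Y\}$ is a locally finite covering of $X$ by locally closed subvarieties with each $\mathcal F_{|Y}$ constructible, choose for each $Y$ a locally finite covering $\{Z\}$ of $Y$ by subvarieties locally closed in $Y$ — hence locally closed in $X$ — with $(\mathcal F_{|Y})_{|Z} = \mathcal F_{|Z}$ locally finitely presented; the combined family, indexed over all pairs $(Y,Z)$, is then a covering of $X$ by locally closed subvarieties, and the one point requiring care is that it is still \emph{locally finite}. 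For this I would invoke that, by definition, $X$ is a locally finite union of open subvarieties $X_\alpha$ of finite type over $k$, hence Noetherian and quasi-compact: $X_\alpha$ meets only finitely many $Y$, each $Y \cap X_\alpha$ is quasi-compact and so meets only finitely many $Z$, hence $X_\alpha$ meets only finitely many members of the combined family, and one concludes because $\{X_\alpha\}$ is itself a locally finite open covering of $X$.

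For the second assertion, assume first $\mathcal F$ is constructible and $v : T' \to T$ is a morphism over $X/K$; using the same covering $\{Y\}$ of $X$ one has $(\mathcal F_{|T'})_{|Y} = (\mathcal F_{|Y})_{|T'_Y}$ with $T'_Y := (Y/K) \times_{(X/K)} T' = T' \times_T T_Y$, so it is enough to know that the restriction along a morphism of overconvergent presheaves of a locally finitely presented module is locally finitely presented. This holds because that restriction is an instance of $v^{-1}$, which preserves isocrystals, and the realizations of the restriction occur among the realizations of the original module, hence are coherent. For the converse, assume moreover that $v$ is a covering and that $\mathcal F_{|T'}$ is constructible; picking a locally finite covering $\{Y\}$ of $X$ with $(\mathcal F_{|T'})_{|Y}$ locally finitely presented, the module $(\mathcal F_{|Y})_{|T'_Y}$ is locally finitely presented and $T'_Y \to T_Y$ is again a covering (base change of a covering), so $\mathcal F_{|Y}$ is locally finitely presented because being locally finitely presented is, as the terminology suggests, a property local on the base, which may be tested after a covering. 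Hence $\mathcal F$ is constructible.

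For the third assertion, the key input is the identification of presheaves
$$
(X' \times_X Y)/K \simeq (X'/K) \times_{(X/K)} (Y/K),
$$
which holds because a section of either side over an overconvergent variety $(X'',V'')$ is the same datum as a factorization of $X'' \to X'$ through $X' \times_X Y$; note that $X' \times_X Y = f^{-1}(Y)$ is locally closed in $X'$ since $Y$ is locally closed in $X$. Taking the fibre product with $T$ over $X'/K$ yields $T_{f^{-1}(Y)} = T_Y$, hence $\mathcal F_{|f^{-1}(Y)} = \mathcal F_{|Y}$. Now if $\{Y\}$ is a locally finite covering of $X$ by locally closed subvarieties with $\mathcal F_{|Y}$ locally finitely presented, then $\{f^{-1}(Y)\}$ is a covering of $X'$ by locally closed subvarieties — preimages of locally closed subsets are locally closed, and the preimage under the continuous map $f$ of a locally finite family is locally finite — and each $\mathcal F_{|f^{-1}(Y)} = \mathcal F_{|Y}$ is locally finitely presented, so $\mathcal F$ is constructible with respect to $X'$.

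I expect the genuinely delicate point to be the verification of local finiteness of the refined covering in the first assertion; once one agrees to exploit the finite-type local structure of algebraic varieties this becomes routine, and the remaining ingredients are purely formal consequences of facts already recorded above, namely that $v^{-1}$ preserves isocrystals and that a module is locally finitely presented exactly when it is an isocrystal with coherent realizations.
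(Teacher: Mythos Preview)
Your argument is correct and follows the same approach as the paper, only with considerably more detail: the paper disposes of (1) by invoking ``transitivity of locally finite coverings by locally closed subsets'' in a single line, while you unpack the local finiteness check using the finite-type local structure of $X$; for (2) and (3) your reductions via $(\mathcal F_{|T'})_{|Y} = (\mathcal F_{|Y})_{|T'_Y}$ and $\mathcal F_{|f^{-1}(Y)} = \mathcal F_{|Y}$ are exactly the paper's. Your remark that the pullback of a locally finite family under a continuous map is locally finite is correct and is what the paper tacitly uses in (3).
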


\begin{proof}
The first assertion is an immediate consequence of the transitivity of locally finite coverings by locally closed subsets: if $X = \cup X_{i}$ and $X_{i} = \cup X_{ij}$ are such coverings, so is the covering $X = \cup X_{ij}$.

In order to prove the second assertion, note first that it is formally satisfied by locally finitely presented modules.
Moreover, if $Y$ is a locally closed subvariety of $X$, we have $(\mathcal F_{|T'})_{|Y} = (\mathcal F_{|Y})_{|T'_{Y}}$.
The result follows.

Finally, for the third assertion, if $X = \cup X_{i}$ is a locally finite covering by locally closed subvarieties, so is $X' = \cup f^{-1}(X_{i})$.
Moreover, by definition $\mathcal F_{|f^{-1}(X_{i})} = \mathcal F_{|X_{i}}$ and there is nothing to do.
\end{proof}

Together with corollary \ref{eqemb} above, the next proposition will allow us to move freely along a closed or open embedding when we consider constructible isocrystals (note that this is obviously wrong for overconvergent isocrystals with coherent realizations):

\begin{prop}
\begin{enumerate}
\item
If $\alpha : U \hookrightarrow X$ is an open immersion of algebraic varieties, then a module $\mathcal F''$ on $T_{U}$ is constructible if and only if $\alpha_{*}\mathcal F''$ is constructible.
\item
If $\beta : Z \hookrightarrow X$ is a closed embedding of algebraic varieties, then a module $\mathcal F'$ on $T_{Z}$ is constructible if and only if $\beta_{\dagger}\mathcal F'$ is constructible.
\end{enumerate}
\end{prop}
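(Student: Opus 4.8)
The plan is to deduce both equivalences formally from the partition $X = U \sqcup Z$, the elementary restriction identities for $\alpha_{*}$ and $\beta_{\dagger}$, and Proposition \ref{firstprop}. Observe first that $\mathcal F''$ and $\mathcal F'$ are only assumed to be modules, so the sole structural input is that $\alpha_{*}$ and $\beta_{\dagger}$ operate on modules, together with the four restriction computations $(\alpha_{*}\mathcal F'')_{|U} = \mathcal F''$, $(\alpha_{*}\mathcal F'')_{|Z} = 0$, $(\beta_{\dagger}\mathcal F')_{|Z} = \mathcal F'$ and $(\beta_{\dagger}\mathcal F')_{|U} = 0$. The first two are $\alpha^{-1}\alpha_{*} = \mathrm{Id}$ (Corollary \ref{resex}(1)) and $\beta^{-1}\alpha_{*} = 0$ (Corollary \ref{resex}(2), since $U$ and $Z$ are disjoint locally closed subvarieties of $X$); the last two are parts (a) and (b) of Proposition \ref{invim}(1).

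For the ``only if'' directions I would apply the criterion of Proposition \ref{firstprop}(\ref{equiv}) to the finite, hence locally finite, covering of $X$ by the locally closed subvarieties $U$ (open) and $Z$ (closed). If $\mathcal F''$ is constructible on $T_{U}$, then $(\alpha_{*}\mathcal F'')_{|U} = \mathcal F''$ is constructible by hypothesis while $(\alpha_{*}\mathcal F'')_{|Z} = 0$ is locally finitely presented, hence constructible; so $\alpha_{*}\mathcal F''$ is constructible. Symmetrically, if $\mathcal F'$ is constructible on $T_{Z}$, then $(\beta_{\dagger}\mathcal F')_{|Z} = \mathcal F'$ and $(\beta_{\dagger}\mathcal F')_{|U} = 0$ are both constructible, so $\beta_{\dagger}\mathcal F'$ is constructible. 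For the ``if'' directions I would instead invoke Proposition \ref{firstprop}(2): restricting the constructible module $\alpha_{*}\mathcal F''$ (resp. $\beta_{\dagger}\mathcal F'$) along the morphism of presheaves $T_{U} \to T$ (resp. $T_{Z} \to T$) yields a constructible module, which by the identities recorded above is exactly $\mathcal F''$ (resp. $\mathcal F'$).

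The argument is essentially formal, so I expect no genuine obstacle; the one point that needs care is purely bookkeeping about the reference variety against which constructibility is measured. In the ``if'' directions, Proposition \ref{firstprop}(2) a priori delivers constructibility of $\mathcal F''$ (resp. $\mathcal F'$) with respect to $X$, whereas the conclusion sought is constructibility with respect to $U$ (resp. $Z$); this gap is removed by Proposition \ref{firstprop}(\ref{astre}) applied to the map $U \to X$ (resp. $Z \to X$). Once this is noted, both equivalences follow at once.
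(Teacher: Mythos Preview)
Your proof is correct and follows essentially the same approach as the paper's own argument: reduce to the situation where $U$ and $Z$ are complementary, use the four restriction identities $(\alpha_{*}\mathcal F'')_{|U} = \mathcal F''$, $(\alpha_{*}\mathcal F'')_{|Z} = 0$, $(\beta_{\dagger}\mathcal F')_{|Z} = \mathcal F'$, $(\beta_{\dagger}\mathcal F')_{|U} = 0$, and conclude via Proposition~\ref{firstprop}. Your version is in fact more explicit than the paper's, which simply records the identities and leaves the appeal to Proposition~\ref{firstprop} implicit; in particular your remark about Proposition~\ref{firstprop}(\ref{astre}) handling the change of reference variety in the ``if'' direction is a genuine clarification.
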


\begin{proof}
We may assume that $U$ and $Z$ are open and closed complements.
We saw in corollary \ref{resex} that $(\alpha_{*} \mathcal F'')_{|Z} = \mathcal F''$ and $(\alpha_{*} \mathcal F')_{|U} = 0$.
And we also saw in proposition \ref{invim} that $(\beta_{\dagger} \mathcal F')_{|Z} = \mathcal F'$ and $(\beta_{\dagger} \mathcal F')_{|U} = 0$.
\end{proof}

It is easy to see that the \emph{usual} dual to a constructible isocrystal is \emph{not} an isocrystal in general: if $\beta : Z \hookrightarrow X$ is a closed embedding of algebraic varieties and $E$ is an overconvergent isocrystal on $Z$ with coherent realizations, it follows from proposition \ref{invim} that
$$
(\beta_{\dagger}E)\check{} := \mathcal Hom(\beta_{\dagger}E, \mathcal O^\dagger_{T_{}}) = \beta_{*}\mathcal Hom(E, \mathcal O^\dagger_{T_{Z}}) = \beta_{*}E\check{},
$$
which is constructible but is not an isocrystal in general (as we saw in section \ref{emb}).
 
The next property is also very important because it allows the use of noetherian induction to reduce some assertions about constructible isocrystals to analogous assertions about overconvergent isocrystals with coherent realizations.

\begin{lem} \label{semloc}
A module $\mathcal F$ on $T$ is constructible if and only if there exists a closed subvariety $Z$ of $X$ such that, if $U := X \setminus Z$, then both $\mathcal F_{|Z}$ and $\mathcal F_{|U}$ are constructible.
We may even assume that $U$ is dense in $X$ and $\mathcal F_{|U}$ is locally finitely presented.
\end{lem}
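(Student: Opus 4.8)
The plan is to prove the two implications separately, the real content being the strengthening in the last sentence. The ``if'' direction is immediate: if a closed subvariety $Z$ with complement $U$ is given for which $\mathcal F_{|Z}$ and $\mathcal F_{|U}$ are constructible, then $\{Z,U\}$ is a finite — hence locally finite — covering of $X$ by locally closed subvarieties, and Proposition~\ref{firstprop}(1) at once gives that $\mathcal F$ is constructible. Conversely, if $\mathcal F$ is constructible then $\mathcal F_{|Z}$ and $\mathcal F_{|U}$ are constructible for \emph{every} closed $Z$ with complement $U$, by Proposition~\ref{firstprop}(2) applied to $T_{Z}\to T$ and $T_{U}\to T$; so the only thing left is to produce one closed $Z$ for which in addition $U:=X\setminus Z$ is dense and $\mathcal F_{|U}$ is locally finitely presented.

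For that I would extract the ``generic stratum'' of a defining covering. By constructibility there is a locally finite covering $X=\bigcup_{i}Y_{i}$ by locally closed subvarieties with each $\mathcal F_{|Y_{i}}$ locally finitely presented. Since $X$ is locally noetherian, its irreducible components form a locally finite family of closed subsets covering $X$; let $S$ be the set of their generic points and put $X_{\eta}:=\overline{\{\eta\}}$ for $\eta\in S$. For each $\eta$ choose $i(\eta)$ with $\eta\in Y_{i(\eta)}$ and an open $O_{\eta}\subseteq X$ with $Y_{i(\eta)}=\overline{Y_{i(\eta)}}\cap O_{\eta}$, noting that $X_{\eta}\subseteq\overline{Y_{i(\eta)}}$ because $\eta\in Y_{i(\eta)}$, and set
$$
\tilde O_{\eta}:=O_{\eta}\cap\Bigl(X\setminus\bigcup_{\eta'\neq\eta}X_{\eta'}\Bigr).
$$
This is open in $X$ (the $X_{\eta'}$ are closed and the family is locally finite), contains $\eta$, lies inside $X_{\eta}$ (every point of $X$ sits in some component) and therefore inside $\overline{Y_{i(\eta)}}\cap O_{\eta}=Y_{i(\eta)}$; hence $\mathcal F_{|\tilde O_{\eta}}=(\mathcal F_{|Y_{i(\eta)}})_{|\tilde O_{\eta}}$ is locally finitely presented, as restriction preserves locally finitely presented modules. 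The $\tilde O_{\eta}$ are pairwise disjoint and their family is locally finite, so $U:=\bigcup_{\eta\in S}\tilde O_{\eta}$ is an open subvariety of $X$, dense since it contains every generic point; and $\mathcal F_{|U}$ is locally finitely presented because this is a local condition on $U$ and $\{\tilde O_{\eta}\}$ is an open covering of $U$. Then $Z:=X\setminus U$ does the job, with $\mathcal F_{|Z}$ constructible by Proposition~\ref{firstprop}(2).

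The one point that needs genuine care — and that I expect to be the only real obstacle — is making the locally-finitely-presented locus \emph{open in $X$} rather than merely open within each irreducible component: for irreducible $X$ one could simply take $U$ to be (an open dense part of) the single $Y_{i}$ through the generic point, but in general one has to intersect with $X\setminus\bigcup_{\eta'\neq\eta}X_{\eta'}$ both to isolate one component and to force disjointness, and then invoke the locality of ``locally finitely presented'' to glue the pieces back together over $U$. Everything else is formal from Proposition~\ref{firstprop}.
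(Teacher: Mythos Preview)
Your proof is correct and follows essentially the same route as the paper's. The paper is simply terser: it asserts that any locally closed $Y$ containing a generic point $\xi$ of $X$ ``necessarily contains an open neighborhood $U_{\xi}$ of $\xi$ in $X$'' and takes $U=\bigcup U_{\xi}$, whereas your sets $\tilde O_{\eta}=O_{\eta}\cap\bigl(X\setminus\bigcup_{\eta'\neq\eta}X_{\eta'}\bigr)$ are precisely the construction that justifies this assertion. So the ``one point that needs genuine care'' you flag is exactly the step the paper leaves implicit, and your argument for it is the natural one.
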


\begin{proof}
The condition is sufficient thanks to assertion \ref{equiv} of Proposition \ref{firstprop}.
Conversely, if $\xi$ is a generic point of $X$, then there exists a locally closed subset $Y$ of $X$ such that $\xi \in Y$ and $\mathcal F_{|Y}$ is locally finitely presented.
And $Y$ contains necessarily an open neighborhood $U_{\xi}$ of $\xi$ in $X$.
We may choose $U := \cup U_{\xi}$.
\end{proof}

\begin{prop} \label{extension}
An isocrystal $E$ on $T$ is constructible if and only if there exists an exact sequence
\begin{align} \label{fdext}
0 \to \beta_{\dagger}E' \to E \to \alpha_{*} E'' \to 0 
\end{align}
where $E''$ (resp. $E'$) is a constructible isocrystal on a closed subvariety $Z$ of $X$ (resp. on $U := X \setminus Z$) and $\beta : Z \hookrightarrow X$ (resp. $\alpha : U \hookrightarrow X$) denotes the inclusion map.
We may assume that $U$ is dense in $X$ and that $E''$ has coherent realizations.
\end{prop}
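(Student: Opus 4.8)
The plan is to prove the two directions separately, using the machinery of Section \ref{emb} to control the interaction of $\beta_\dagger$, $\alpha_*$ and restriction. For the "if" direction, suppose we are given an exact sequence \eqref{fdext}. Since $E'$ is a constructible isocrystal on $T_Z$, $\beta_\dagger E'$ is a constructible isocrystal on $T$ (by the last proposition, $\beta_\dagger$ preserves constructibility, and by Proposition \ref{invim} it preserves isocrystals); similarly $\alpha_* E''$ is a constructible isocrystal on $T$. As constructible modules are stable under extension (first proposition of this section), $E$ is a constructible module, and by hypothesis it is an isocrystal, hence a constructible isocrystal.

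For the "only if" direction, suppose $E$ is a constructible isocrystal on $T$. Apply Lemma \ref{semloc}: there is a closed $Z \subset X$ with dense open complement $U$ such that $E_{|Z}$ is constructible and $E_{|U}$ is locally finitely presented (i.e. has coherent realizations). Set $E'' := E_{|U}$ and $E' := E_{|Z}$; these are constructible isocrystals on $T_U$ and $T_Z$ respectively (restriction of an isocrystal is an isocrystal, and restriction preserves constructibility by Proposition \ref{firstprop}), with $E''$ having coherent realizations. Now invoke Corollary \ref{exact}: for any isocrystal $E$ on $T$ there is a canonical short exact sequence
$$
0 \to \underline \Gamma^\dagger_{Z} E \to E \to j^\dagger_{U}E \to 0,
$$
and by Proposition \ref{isex} together with the identity $j^\dagger_U E = \alpha_* E_{|U}$ this is exactly
$$
0 \to \beta_{\dagger}E_{|Z} \to E \to \alpha_{*}E_{|U} \to 0,
$$
which is the desired sequence \eqref{fdext}.

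The main point requiring care is simply assembling the correct citations: one must check that the "denseness of $U$" and "coherent realizations of $E''$" refinements in Lemma \ref{semloc} survive, which they do because that lemma already delivers them. There is essentially no hard analytic obstacle here — the proposition is a formal consequence of Lemma \ref{semloc}, Corollary \ref{exact}, Proposition \ref{isex} and the stability properties of constructible (iso)crystals established earlier in this section — but I would be careful to note explicitly that the sequence produced is the \emph{canonical} one (so that \eqref{fdext} is not merely \emph{some} extension but the fundamental one), since this canonicity is what makes the statement usable in the noetherian-induction arguments that follow.
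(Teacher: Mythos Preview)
Your proof is correct and follows essentially the same approach as the paper. The only cosmetic difference is in the ``if'' direction: the paper pulls back the exact sequence along $\alpha$ and $\beta$ to identify $E_{|U}\simeq E''$ and $E_{|Z}\simeq E'$ directly (then concludes constructibility from the covering $X=U\cup Z$), whereas you argue via stability of constructible modules under extension; both arguments are immediate consequences of the same earlier results.
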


\begin{proof}
If we are given such and exact sequence, we may pull back along $\alpha$ and $\beta$ in order to obtain $E' \simeq E_{|Z}$ and $E'' \simeq E_{|U}$.
And conversely, we may set $E' :=  E_{|Z}$ and $E'' := E_{|U}$ in order to get such an exact sequence by proposition \ref{isex}.
\end{proof}

Note that this property is specific to constructible \emph{isocrystals} and that the analog for constructible modules is wrong.

It follows from proposition \ref{exthom} that any extension such as \eqref{fdext} comes from a unique morphism $\alpha_{*}E'' \to j^\dagger_{U}\beta_{*}E'$.
This is a classical glueing method and the correspondence is given by the following morphism of exact sequences
$$
\xymatrix{
0 \ar[r] & \beta_{\dagger}E' \ar[r] & \beta_{*}E' \ar[r] &j^\dagger_{U}\beta_{*}E' \ar[r] & 0
\\
0 \ar[r]  & \beta_{\dagger}E' \ar[r] \ar@{=}[u] & E \ar[u]  \ar[r] & \alpha_{*} E'' \ar[r] \ar[u] & 0.
}
$$
We can do the computations in the very special case of $\alpha : \mathbb A^{1}_{k} \hookrightarrow \mathbb P^{1}_{k}$ and $\beta : \infty \hookrightarrow \mathbb P^{1}_{k}$.
We have $E' = \mathcal O_{\infty/K}^\dagger \otimes_{K} H$ for some finite dimensional vector space $H$, and $E''$ is given by a finite free $K[t]^\dagger$-module $M$ of finite rank endowed with a (overconvergent) connection.
One can show that there exists a canonical isomorphism
\begin{align*}
\mathrm {Ext}(\alpha_{*}E'', \beta_{\dagger}E') & = \mathrm {Hom}(\alpha_{*}E'', j^\dagger_{U}\beta_{*}E')
\\
&= \mathrm{Hom}_{\nabla}(M, \mathcal R \otimes_{K} H)
\\
& = H^
{0}_{\mathrm{dR}}(\check M \otimes_{K[t]^\dagger} \mathcal R) \otimes_{K} H
\end{align*}
(the second identity is not trivial).
A slight generalization will give a classification of constructible isocrystals on smooth projective curves as in theorem 6.15 of \cite{LeStum14}.

\section{Integrable connections and constructibility}

In this section, we will give a more concrete description of constructible isocrystals in the case $T$ is representable by some overconvergent variety $(X,V)$, in the case $T = X_{V}/O$ where $(X,V)$ is a variety over some overconvergent variety $(C,O)$, and finally when $T = X/O$ where $X$ is a variety over $C$ (see section \ref{bible}).

\begin{dfn}
Let $(X, V)$ be an overconvergent variety.
An $i_{X}^{-1}\mathcal O_{V}$-module $\mathcal F$ is \emph{constructible} if there exists a locally finite covering of $X$ by locally closed subvarieties $Y$ such that $i_{Y}^{-1} i_{X*} \mathcal F$ is a coherent $i_{Y}^{-1}\mathcal O_{V}$-module.
\end{dfn}

Of course, we have $i_{Y}^{-1} i_{X*} \mathcal F = i_{Y\subset X}^{-1}\mathcal F$ if we denote by $i_{Y \subset X} : ]Y[_{V} \hookrightarrow ]X[_{V}$ the inclusion of the tubes.

\begin{prop} \label{consm}
Let $(X, V)$ be an overconvergent variety.
Then,
\begin{enumerate}
\item $\mathrm{Isoc}^\dagger_{\mathrm{cons}}(X,V)$ is an abelian subcategory of $\mathrm{Isoc}^\dagger(X,V)$,
\item the realization functor induces an equivalence between $\mathrm{Isoc}^\dagger_{\mathrm{cons}}(X,V)$ and the category of all constructible $i_{X}^{-1}\mathcal O_{V}$-modules.
\end{enumerate}
\end{prop}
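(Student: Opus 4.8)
The plan is to reduce both statements to the fundamental equivalence $\mathrm{Isoc}^\dagger(X,V) \simeq i_{X}^{-1}\mathcal O_{V}\mathrm{-Mod}$ recalled in Section \ref{bible}, which already presents $\mathrm{Isoc}^\dagger(X,V)$ as an abelian category, being a category of modules over a sheaf of rings. I would first isolate the object-level content of the second assertion. If $E$ is an isocrystal on $(X,V)$ and $\gamma : Y \hookrightarrow X$ is a locally closed immersion, then $E_{|Y}$ is an isocrystal on $(Y,V)$ whose realization is $i_{Y \subset X}^{-1}E_{V}$, since restriction commutes with realization on isocrystals (this is exactly the computation used repeatedly in Section \ref{emb}, where $]\gamma[ = i_{Y\subset X}$). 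Because a module on $(Y,V)$ is locally finitely presented precisely when its realization is a coherent $i_{Y}^{-1}\mathcal O_{V}$-module, it follows that $E$ is a constructible isocrystal if and only if there is a locally finite covering of $X$ by locally closed subvarieties $Y$ for which $i_{Y\subset X}^{-1}E_{V}$ is coherent, i.e. if and only if $E_{V}$ is a constructible $i_{X}^{-1}\mathcal O_{V}$-module. Since the realization functor is fully faithful and essentially surjective onto all $i_{X}^{-1}\mathcal O_{V}$-modules, it therefore restricts to an equivalence between $\mathrm{Isoc}^\dagger_{\mathrm{cons}}(X,V)$ and the category of constructible $i_{X}^{-1}\mathcal O_{V}$-modules. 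This proves the second assertion, and turns the first one into the statement that constructible $i_{X}^{-1}\mathcal O_{V}$-modules form an abelian subcategory of $i_{X}^{-1}\mathcal O_{V}\mathrm{-Mod}$.

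For that statement, stability under finite direct sums, cokernels and extensions is already known, and images are kernels of maps to cokernels, so the only real point is stability under kernels, which I would obtain by noetherian induction based on Lemma \ref{semloc}. Constructibility is local on $X$ by Proposition \ref{firstprop}, so, covering $X$ by noetherian open subschemes, one may assume $X$ noetherian. Given $f : \mathcal F \to \mathcal G$ between constructible modules, apply Lemma \ref{semloc} to $\mathcal F$ and to $\mathcal G$ and intersect the resulting dense opens to find a dense open $U \subseteq X$, with closed complement $Z$, on which both $\mathcal F_{|U}$ and $\mathcal G_{|U}$ are coherent; then $(\ker f)_{|U} = \ker(f_{|U})$, as restriction is exact, and this is coherent because coherent modules over any ringed space form an abelian subcategory, hence it is constructible. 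On the other side $\mathcal F_{|Z}$ and $\mathcal G_{|Z}$ are constructible on $(Z,V)$ by Proposition \ref{firstprop}, and $Z$ is a proper closed subvariety, so the induction hypothesis gives that $(\ker f)_{|Z} = \ker(f_{|Z})$ is constructible. Since $\{U,Z\}$ is a finite covering of $X$ by locally closed subvarieties, Proposition \ref{firstprop} then shows $\ker f$ is constructible, which closes the induction.

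The delicate point is organising the induction so that it closes up cleanly: one needs that restriction of a constructible module to a locally closed subvariety is again constructible, and that a module constructible on an open stratum and on the complementary closed stratum is globally constructible — both furnished by Proposition \ref{firstprop} — together with the classical fact that coherent modules over an arbitrary ringed space (here $i_{X}^{-1}\mathcal O_{V}$, which is not a coherent ring) still form an abelian subcategory. The reduction to the noetherian case and the handling of locally finite coverings are then routine bookkeeping.
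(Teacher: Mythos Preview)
Your treatment of the second assertion is essentially the paper's: both reduce to the equivalence $\mathrm{Isoc}^\dagger(X,V)\simeq i_X^{-1}\mathcal O_V\text{-Mod}$ and to the fact that locally finitely presented objects correspond exactly to those with coherent realization.

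For the first assertion your noetherian induction is correct but heavier than what the paper does. The paper dispatches it in one line by appealing to ``the analogous result about coherent modules'', which unpacks as a common-refinement argument rather than an induction: given $f:\mathcal F\to\mathcal G$ between constructible $i_X^{-1}\mathcal O_V$-modules, choose locally finite coverings $\{Y_i\}$ and $\{Z_j\}$ of $X$ by locally closed subvarieties making $\mathcal F$, respectively $\mathcal G$, coherent on each stratum; then $\{Y_i\cap Z_j\}$ is again a locally finite covering by locally closed subvarieties, restriction to each stratum is exact, and there $\ker f$ and $\mathrm{coker}\,f$ are kernel and cokernel of a map of coherent modules, hence coherent by the very fact about coherent sheaves over an arbitrary ringed space that you already invoke. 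No induction, no reduction to the noetherian case. Your route is not wrong, and its shape does foreshadow the genuine noetherian induction used later in Proposition~\ref{fulfait}, but for this statement the direct stratification argument is both shorter and what the paper has in mind.
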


\begin{proof}
It was shown in proposition 3.3.8 of \cite{LeStum11} that the realization functor induces an equivalence between $\mathrm{Isoc}^\dagger(X,V)$ and the category of all $i_{X}^{-1}\mathcal O_{V}$-modules.
And overconvergent isocrystals correspond to coherent modules.
The second assertion is an immediate consequence of these observations.
The first assertion then follows immediately from the analogous result about coherent modules.
\end{proof}

Recall that an $i_{X}^{-1}\mathcal O_{V}$-module may be endowed with an overconvergent stratification.
Then, we have:

\begin{prop} \label{relco}
Let $(X, V)$ be a variety over an overconvergent variety $(C,O)$.
\begin{enumerate}
\item  If $V$ universally flat over $O$ in a neighborhood of $]X[$, then $\mathrm{Isoc}^\dagger_{\mathrm{cons}}(X_{V}/O)$ is an abelian subcategory of $\mathrm{Isoc}^\dagger(X_{V}/O)$,
\item the realization functor induces an equivalence between $\mathrm{Isoc}^\dagger_{\mathrm{cons}}(X_{V}/O)$ and the category of constructible $i_{X}^{-1}\mathcal O_{V}$-modules $\mathcal F$ endowed with an overconvergent stratification.
\end{enumerate}
\end{prop}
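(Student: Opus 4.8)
The plan is to reduce everything to the equivalence
$$
\mathrm{Isoc}^\dagger(X_{V}/O) \simeq \mathrm{Strat}^\dagger(X,V/O)
$$
recalled in Section~\ref{bible} (which is precisely the realization functor), and then to mimic the absolute case treated in Proposition~\ref{consm}. So the real content is to match the notion of a \emph{constructible isocrystal} on $X_{V}/O$ with that of a \emph{constructible} $i_{X}^{-1}\mathcal O_{V}$-module under this equivalence, and afterwards to check stability under kernels.

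First I would prove the second assertion. Let $E$ be an isocrystal on $X_{V}/O$ and $Y \hookrightarrow X$ a locally closed subvariety, with $\gamma : T_{Y} = Y_{V}/O \hookrightarrow X_{V}/O$ the induced map. Since the morphism of overconvergent varieties $(\gamma,\mathrm{Id}_{V}) : (Y,V) \to (X,V)$ lies over $\gamma$, the transition map \eqref{transm} identifies the realization of $E_{|Y}$ on $(Y,V)$ with $]\gamma[^{\dagger}E_{V} = i_{Y\subset X}^{-1}E_{V}$, and this is an isomorphism precisely because $E$ is an isocrystal. Now $E_{|Y}$ is again an isocrystal, this time on $Y_{V}/O$, so applying the equivalence $\mathrm{Isoc}^\dagger(Y_{V}/O) \simeq \mathrm{Strat}^\dagger(Y,V/O)$ — under which locally finitely presented modules correspond to coherent ones — it is locally finitely presented if and only if $i_{Y\subset X}^{-1}E_{V}$ is a coherent $i_{Y}^{-1}\mathcal O_{V}$-module. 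Comparing the two definitions of constructibility, this shows that $E$ is constructible if and only if $E_{V}$, endowed with its overconvergent stratification, is a constructible module; hence the equivalence of Section~\ref{bible} restricts to the asserted one.

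Granting this, the first assertion follows as in Proposition~\ref{consm}: when $V$ is universally flat over $O$ in a neighborhood of $]X[$, the category $\mathrm{Strat}^\dagger(X,V/O)$, hence $\mathrm{Isoc}^\dagger(X_{V}/O)$, is abelian, and I would check that the constructible objects form an abelian subcategory by verifying stability under the kernel and cokernel formed there. Stability under cokernel (and extension) is already known for constructible isocrystals on a presheaf over $X/K$, so only the kernel needs a word: given $f : \mathcal F \to \mathcal G$ between constructible stratified modules, I would choose a common locally finite covering of $X$ by locally closed subvarieties $Y$ trivializing constructibility of both; restricting along $i_{Y\subset X}$, the kernel of $f$ becomes the kernel of $i_{Y\subset X}^{-1}f$, a morphism of coherent $i_{Y}^{-1}\mathcal O_{V}$-modules, and is therefore coherent by the same fact about coherent modules on tubes used in Proposition~\ref{consm}. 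So $\ker f$ is constructible, and it automatically inherits the overconvergent stratification since the ambient category is abelian.

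I expect the two slightly delicate points to be: the bookkeeping that identifies $E \mapsto E_{|Y}$ at the level of presheaves with $i_{Y\subset X}^{-1}$ at the level of realizations — this is exactly where the isocrystal hypothesis is used, through \eqref{transm} — and, the only genuinely non-formal ingredient, the fact that the kernel of a morphism of coherent modules on a tube is again coherent even though $i_{Y}^{-1}\mathcal O_{V}$ is not a coherent ring; this is local on the tube and is the input borrowed from \cite{LeStum11}.
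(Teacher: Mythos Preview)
Your proposal is correct and follows essentially the same route as the paper: the paper's proof simply cites propositions 3.5.3, its corollary, and 3.5.5 of \cite{LeStum11} (which give the equivalence $\mathrm{Isoc}^\dagger(X_{V}/O) \simeq \mathrm{Strat}^\dagger(X,V/O)$ and the identification of locally finitely presented isocrystals with coherent stratified modules) and then says ``the proof goes exactly as in proposition~\ref{consm}'' --- and what you wrote is precisely the unpacking of that sentence. One small remark: your worry that $i_{Y}^{-1}\mathcal O_{V}$ might fail to be a coherent ring is likely unnecessary in the good case (the structure sheaf on a good analytic space is coherent and this survives restriction to an analytic domain), so the kernel argument is even more formal than you suggest.
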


\begin{proof}
According to proposition 3.5.3 of \cite{LeStum11}, its corollary and proposition 3.5.5 of \cite{LeStum11}, the proof goes exactly as in proposition \ref{consm}.
\end{proof}

The next corollary is valid if we work with \emph{good} overconvergent varieties (which we may have assumed from the beginning).

\begin{cor}
If $(C, O)$ is an (good) overconvergent variety and $X$ is an algebraic variety over $C$, then $\mathrm{Isoc}^\dagger_{\mathrm{cons}}(X/O)$ is an abelian subcategory of $\mathrm{Isoc}^\dagger(X/O)$.
\end{cor}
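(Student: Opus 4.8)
The assertion is local on $X$, so the plan is to reduce it to Proposition~\ref{relco} by choosing a local geometric realization, apply that proposition, and glue. I would begin by recording three \emph{locality} facts. First, for a locally closed subvariety $Y$ of $X$ the restriction functor along $Y/O \hookrightarrow X/O$ is exact on modules: the structure sheaf restricts to the structure sheaf, so it is simply an inverse image of topoi. Second, a module on $X/O$ is an isocrystal as soon as its restrictions to the members of an open covering are isocrystals, because for a variety $(X',V')$ over $X/O$ the tube $]X'[_{V'}$ is the union of the tubes of the preimages of the members of the covering, and the transition maps \eqref{transm} may thus be tested after restriction. Third, constructibility may likewise be tested on a covering, by assertion~\ref{firstprop}(2). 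Consequently, to prove that $\ker f$ and $\operatorname{coker} f$ (formed in $\mathcal O^\dagger_{X/O}\mathrm{-Mod}$) are again constructible isocrystals on $X/O$ for every morphism $f$ of constructible isocrystals — and that the coimage–image comparison is an isomorphism — it suffices to do so after restriction to each member of a suitable open covering of $X$.

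Next I would choose a covering of $X$ by open subvarieties $X_{i}$ each fitting into a geometric situation as in \eqref{geom}. Starting from a formal embedding $C \hookrightarrow S$ provided by $(C,O)$ and a local projective embedding of $X_{i}$ over $k$, one obtains $X_{i} \hookrightarrow P_{i}$ with $P_{i}$ proper and smooth around $X_{i}$ over $S$, together with a neighborhood $V_{i}$ of $]X_{i}[$ inside $P_{iK} \times_{S_{K}} O$. Since $(C,O)$, and hence $(X_{i},V_{i})$, is good, corollary~2.5.12 of \cite{LeStum11} yields an isomorphism of topoi $(X_{i,V_{i}}/O)_{\mathrm{An}^\dagger} \simeq (X_{i}/O)_{\mathrm{An}^\dagger}$ compatible with the structure sheaves and with restriction to locally closed subvarieties of $X_{i}$; it therefore induces an equivalence $\mathrm{Isoc}^\dagger(X_{i}/O) \simeq \mathrm{Isoc}^\dagger(X_{i,V_{i}}/O)$ carrying $\mathrm{Isoc}^\dagger_{\mathrm{cons}}(X_{i}/O)$ onto $\mathrm{Isoc}^\dagger_{\mathrm{cons}}(X_{i,V_{i}}/O)$. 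Because $P_{i}$ is smooth around $X_{i}$ over $S$, the analytic variety $V_{i}$ is smooth — in particular universally flat — over $O$ in a neighborhood of $]X_{i}[$, so Proposition~\ref{relco} applies: $\mathrm{Isoc}^\dagger_{\mathrm{cons}}(X_{i,V_{i}}/O)$, and hence $\mathrm{Isoc}^\dagger_{\mathrm{cons}}(X_{i}/O)$, is an abelian subcategory of the corresponding category of all isocrystals.

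Finally I would glue. Given a morphism $f : E \to F$ of constructible isocrystals on $X/O$, form $\ker f$ and $\operatorname{coker} f$ in $\mathcal O^\dagger_{X/O}\mathrm{-Mod}$. By exactness of the restriction functors, $(\ker f)_{|X_{i}} = \ker(f_{|X_{i}})$ and $(\operatorname{coker} f)_{|X_{i}} = \operatorname{coker}(f_{|X_{i}})$, which are constructible isocrystals on $X_{i}/O$ by the previous step; by the locality facts of the first paragraph, $\ker f$ and $\operatorname{coker} f$ are therefore constructible isocrystals on $X/O$. As $\mathcal O^\dagger_{X/O}\mathrm{-Mod}$ is abelian, the canonical map from the coimage to the image of $f$ is an isomorphism, and these kernels and cokernels — lying in the subcategory — then exhibit $\mathrm{Isoc}^\dagger_{\mathrm{cons}}(X/O)$ as an abelian category whose inclusion into $\mathrm{Isoc}^\dagger(X/O)$, and into $\mathcal O^\dagger_{X/O}\mathrm{-Mod}$, is exact.

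The point requiring care — the reason the corollary is not purely formal from Proposition~\ref{firstprop} — is that $\mathrm{Isoc}^\dagger(X/O)$ need not be abelian in general, since the inclusion of isocrystals into modules is only right exact and kernels of morphisms of isocrystals need not be isocrystals. This is precisely what the universal flatness hypothesis of Proposition~\ref{relco} repairs, and the key observation is that a local geometric realization of $X/O$ can always be chosen smooth, hence flat, over $O$, so that hypothesis is automatically available Zariski-locally on $X$; the goodness assumption is what lets one transport the statement between $X_{V}/O$ and $X/O$.
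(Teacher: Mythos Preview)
Your argument is correct and follows the same route as the paper: localize on $X$ so as to obtain a geometric realization, transport the question to $X_{V}/O$ via the equivalence of topoi (your citation of corollary~2.5.12 of \cite{LeStum11} plays the role of the paper's appeal to proposition~3.5.8 of \cite{LeStum11}), and then invoke Proposition~\ref{relco}. The paper compresses the localization step into a single citation of proposition~4.6.3 of \cite{LeStum11}, whereas you spell out the glueing by hand, but the strategy is the same.
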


\begin{proof}
Using proposition 4.6.3 of \cite{LeStum11}, we may assume that $X$ has a geometric realization over $(C,O)$ and use the second part of proposition 3.5.8 in \cite{LeStum11}.
\end{proof}
 
We could have included a description of constructible isocrystal as modules endowed with an overconvergent stratification on some geometric realization of $X/O$ but we are heading towards a finer description (this is what the rest of this section is all about).

Recall that any overconvergent stratification will induce, by pull back at each level, a usual stratification.
This is a faithful construction and we want to show that it is actually \emph{fully} faithful when we work with constructible modules (in suitable geometric situations).
Thus, we have the following sequence of injective maps
$$
\mathrm{Hom}_{\mathrm{Strat}^\dagger}(\mathcal F, \mathcal G)  \hookrightarrow \mathrm{Hom}_{\mathrm{Strat}}(\mathcal F, \mathcal G)  \hookrightarrow \mathrm{Hom}(\mathcal F, \mathcal G)
$$
and we wonder wether the first one is actually bijective.
In order to do so, we will also have to study the injectivity of the maps in the sequence
$$
\mathrm{Ext}_{\mathrm{Strat}^\dagger}(\mathcal F, \mathcal G)  \to \mathrm{Ext}_{\mathrm{Strat}}(\mathcal F, \mathcal G)  \to \mathrm{Ext}(\mathcal F, \mathcal G).
$$

We start with the following observation:

\begin{prop} 
Let $(X, V)$ be a variety over an overconvergent variety $(C,O)$, $\alpha : U \hookrightarrow X$ the inclusion of an open subvariety of $X$ and $\beta : Z \hookrightarrow X$ the inclusion of a closed complement.
Let $\mathcal F'$ be an $i_{Z}^{-1}\mathcal O_{V}$-module and $\mathcal F''$ an $i_{U}^{-1}\mathcal O_{V}$-module.
Then a usual (resp. an overconvergent) stratification on the direct sum $]\beta[_{!} \mathcal F' \oplus  ]\alpha[_{*} \mathcal F''$ is uniquely determined by its restrictions to $\mathcal F'$ and $\mathcal F''$.
\end{prop}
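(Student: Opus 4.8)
The plan is to show that any usual (resp.\ overconvergent) stratification on $\mathcal G := ]\beta[_{!}\mathcal F' \oplus ]\alpha[_{*}\mathcal F''$ is a diagonal $2\times 2$ matrix of morphisms between the pulled-back summands, so that it is completely recovered from its two diagonal blocks, which are exactly its restrictions to $\mathcal F'$ and $\mathcal F''$ (and are then automatically isomorphisms satisfying the cocycle and normalization conditions, i.e.\ honest stratifications on $\mathcal F'$ and $\mathcal F''$). Everything rests on the vanishing of the two ``cross'' morphisms, which in turn comes from the disjointness of the tubes of $U$ and $Z$.

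First I would recall the geometry: since $\alpha$ is an open immersion and $\beta$ a closed embedding, $]\alpha[$ is a closed immersion and $]\beta[$ an open immersion of tubes, and $]U[_{V}$, $]Z[_{V}$ are mutually complementary (closed, resp.\ open) in $]X[_{V}$; this persists after replacing $V$ by $V\times_{O}V$ or by any infinitesimal neighbourhood $V^{(n)}$, since forming tubes commutes with passing to a complement in the variety. Hence $]\beta[_{!}(-)$ has zero stalks off $]Z[$ and $]\alpha[_{*}(-)$ has zero stalks off $]U[$, in all of these ambient tubes. Next I would check that each pullback functor occurring in the definition of a usual (resp.\ an overconvergent) stratification commutes with $]\beta[_{!}$ and with $]\alpha[_{*}$: for $]\beta[_{!}$ this is the compatibility of extension by zero along an open immersion with base change, used in the proof of Lemma~\ref{lmbdg}; for $]\alpha[_{*}$ it is the topological base-change identity $]f[_{u}^{-1}\circ]\alpha'[_{*}=]\alpha''[_{*}\circ]g[_{u}^{-1}$ for closed immersions used above, together with the projection formula for the closed immersion $]\alpha[$ (the $\mathcal O$-module pullback being a tensor product with $i_{X}^{-1}\mathcal O_{V^{(n)}}$, resp.\ with a pulled-back structure sheaf). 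Thus each pullback of $\mathcal G$ is again a direct sum of a sheaf $]\beta'[_{!}(-)$ and a sheaf $]\alpha'[_{*}(-)$, with $]Z'[$ and $]U'[$ disjoint.

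The heart of the argument is then formal: if $\mathcal A$ has zero stalks on $]U'[$ and $\mathcal B$ has zero stalks on $]Z'[$ with $]Z'[\cap]U'[=\emptyset$, then $\mathrm{Hom}(\mathcal A,\mathcal B)=\mathrm{Hom}(\mathcal B,\mathcal A)=0$, since any such morphism vanishes on every stalk. Applying this to the two cross-blocks of $\epsilon$ (resp.\ of each $\epsilon^{(n)}$) shows that $\epsilon$ is diagonal, $\epsilon=\epsilon'\oplus\epsilon''$; being an isomorphism, so are $\epsilon'$ and $\epsilon''$. Since $]\beta[_{!}$ and $]\alpha[_{*}$ are fully faithful and compatible with the pullbacks, $\epsilon'$ and $\epsilon''$ descend uniquely to isomorphisms $\bar\epsilon'$, $\bar\epsilon''$ on $\mathcal F'$, $\mathcal F''$, and the cocycle and normalization conditions for $\epsilon$ are equivalent, through these faithful functors, to the same conditions for $\bar\epsilon'$ and $\bar\epsilon''$. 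So $\epsilon$ is the direct sum of the stratifications $\bar\epsilon'$ and $\bar\epsilon''$, hence determined by its restrictions.

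The main obstacle I anticipate is the bookkeeping in the compatibility step — checking that $]\alpha[_{*}$ commutes with the pullback functors, in particular the projection-formula point for a usual stratification where one tensors with the possibly non-flat sheaf $i_{X}^{-1}\mathcal O_{V^{(n)}}$. Once that is settled, the vanishing of the cross terms, and with it the whole statement, is a formal consequence of the disjointness of the tubes of $U$ and $Z$.
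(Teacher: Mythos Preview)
Your approach is correct and is essentially the same as the paper's: write the stratification as a $2\times 2$ matrix with respect to the direct-sum decomposition and show that the two off-diagonal blocks vanish because their source and target have disjoint supports (the paper says ``by considering the fibres''), so that the stratification is the sum of its restrictions. One small simplification worth noting: for the \emph{usual} case you do not actually need the projection-formula step you worry about, since the source $i_{X}^{-1}\mathcal O_{V^{(n)}}\otimes]\alpha[_{*}\mathcal F''$ already has support contained in $]U[$ and the target $]\beta[_{!}\mathcal F'\otimes i_{X}^{-1}\mathcal O_{V^{(n)}}$ in $]Z[$ (tensoring cannot enlarge supports), which is exactly how the paper argues; the compatibility of $]p_{i}[^{\dagger}$ with $]\alpha[_{*}$ and $]\beta[_{!}$ is only invoked, as you do, in the overconvergent case.
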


\begin{proof}
Let us denote by
$$
\epsilon^{(n)}  = \left(\begin{array} {cc} ]\beta[_{!} \epsilon'^{(n)} & \varphi_{n} \\ \psi_{n} &  ]\alpha[_{*} \epsilon''^{(n)}\end{array} \right)
\quad \left(\mathrm{resp.}\ \epsilon  = \left(\begin{array} {cc} ]\beta[_{!} \epsilon' & \varphi \\ \psi &  ]\alpha[_{*} \epsilon''\end{array} \right)\right)
$$
the (resp. the overconvergent) stratification of $]\beta[_{!} \mathcal F' \oplus  ]\alpha[_{*} \mathcal F''$ (recall that the maps $]\beta[_{!}$ and $]\alpha[_{*}$ are fully faithful).
Then, the maps
$$
\varphi_{n} : i_{X}^{-1}\mathcal O_{V^{(n)}} \otimes_{i_{X}^{-1}\mathcal O_{V}} ]\alpha[_{*} \mathcal F'' \to ]\beta[_{!} \mathcal F' \otimes_{i_{X}^{-1}\mathcal O_{V}} i_{X}^{-1}\mathcal O_{V^{(n)}}
$$
and
$$
\psi_{n} : i_{X}^{-1}\mathcal O_{V^{(n)}} \otimes_{i_{X}^{-1}\mathcal O_{V}}  ]\beta[_{!} \mathcal F' \to ]\alpha[_{*} \mathcal F''  \otimes_{i_{X}^{-1}\mathcal O_{V}} i_{X}^{-1}\mathcal O_{V^{(n)}}
$$
$$
(\mathrm{resp.}\ \varphi:  ]p_{2}[^\dagger  ]\alpha[_{*} \mathcal F'' \simeq ]p_{1}[^\dagger ]\beta[_{!} \mathcal F' \quad \mathrm{and} \quad \psi :  ]p_{2}[^\dagger ]\beta[_{!} \mathcal F' \simeq ]p_{1}[^\dagger  ]\alpha[_{*} \mathcal F'')
$$
are necessarily zero as one may see by considering the fibres (resp. and using the fact that $p_{i}^\dagger$ commutes with $]\alpha[_{*}$ and $]\beta[_{!}$).
\end{proof}

We keep the assumptions and the notations of the proposition for a while and assume that $\mathcal F'$ and $\mathcal F''$ are both endowed with a usual (resp. an overconvergent) stratification.
From the general fact that
$$
\mathrm{Hom}( ]\beta[_{!} \mathcal F', ]\alpha[_{*} \mathcal F'') = 0\quad \mathrm{and} \quad \mathrm{Hom}(]\alpha[_{*} \mathcal F'', ]\beta[_{!} \mathcal F') = 0,
$$
we can deduce that
$$
\mathrm{Hom}_{\mathrm{Strat}}( ]\beta[_{!} \mathcal F', ]\alpha[_{*} \mathcal F'') = 0\quad \mathrm{and} \quad \mathrm{Hom}_{\mathrm{Strat}}(]\alpha[_{*} \mathcal F'', ]\beta[_{!} \mathcal F') = 0
$$
$$
(\mathrm{resp.}\ \mathrm{Hom}_{\mathrm{Strat}^\dagger}( ]\beta[_{!} \mathcal F', ]\alpha[_{*} \mathcal F'') = 0\quad \mathrm{and} \quad \mathrm{Hom}_{\mathrm{Strat}^\dagger}(]\alpha[_{*} \mathcal F'', ]\beta[_{!} \mathcal F') = 0).
$$
Since we also know that
$$
\mathrm{Ext}( ]\beta[_{!} \mathcal F', ]\alpha[_{*} \mathcal F'') = 0,
$$
we can deduce the following result from the proposition:

\begin{cor} \label{spliov1}
If $\mathcal F'$ and $\mathcal F''$ are both endowed with a usual (resp. an overconvergent) stratification, then we have
$$
\mathrm{Ext}_{\mathrm{Strat}}( ]\beta[_{!} \mathcal F', ]\alpha[_{*} \mathcal F'') = 0 \quad (\mathrm{resp.}\ \mathrm{Ext}_{\mathrm{Strat}^\dagger}( ]\beta[_{!} \mathcal F', ]\alpha[_{*} \mathcal F'') = 0).
$$
\end{cor}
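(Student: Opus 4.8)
The plan is to deduce the vanishing by forgetting stratifications: use the already-recorded vanishing $\mathrm{Ext}(]\beta[_{!}\mathcal F', ]\alpha[_{*}\mathcal F'') = 0$ for $i_{X}^{-1}\mathcal O_{V}$-modules to split the underlying extension, and then to promote that splitting to one in $\mathrm{Strat}$ (resp. $\mathrm{Strat}^\dagger$) with the help of the preceding proposition.

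Concretely, suppose given an extension
$$
0 \to ]\alpha[_{*}\mathcal F'' \xrightarrow{\iota} \mathcal G \xrightarrow{q} ]\beta[_{!}\mathcal F' \to 0
$$
in $\mathrm{Strat}(X,V/O)$ (resp. $\mathrm{Strat}^\dagger(X,V/O)$), where $\mathcal F''$ and $\mathcal F'$ carry their given stratifications $\epsilon''$ and $\epsilon'$. Applying $\mathrm{Ext}(]\beta[_{!}\mathcal F', ]\alpha[_{*}\mathcal F'') = 0$ after forgetting the stratifications, we obtain a section $s : ]\beta[_{!}\mathcal F' \to \mathcal G$ of $q$ in $i_{X}^{-1}\mathcal O_{V}$-modules, hence an isomorphism $\mathcal G \simeq ]\beta[_{!}\mathcal F' \oplus ]\alpha[_{*}\mathcal F''$ of underlying modules taking $\iota$ and $q$ to the canonical inclusion and projection. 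Transporting the stratification of $\mathcal G$ through this isomorphism produces a stratification on $]\beta[_{!}\mathcal F' \oplus ]\alpha[_{*}\mathcal F''$ whose restriction to the summand $]\alpha[_{*}\mathcal F''$ is $]\alpha[_{*}\epsilon''$ (because $\iota$ is a morphism of stratified modules) and whose restriction to the summand $]\beta[_{!}\mathcal F'$ is $]\beta[_{!}\epsilon'$ (because $q$ is). By the preceding proposition a stratification on this direct sum is uniquely determined by these two restrictions; since $]\beta[_{!}\epsilon' \oplus ]\alpha[_{*}\epsilon''$ is a stratification with the same restrictions, the transported stratification must equal it, i.e. its off-diagonal terms ($\varphi_{n},\psi_{n}$ in the usual case, $\varphi,\psi$ in the overconvergent one) vanish. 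For a block-diagonal stratification the inclusion of the summand $]\beta[_{!}\mathcal F'$ — that is, the map $s$ — is automatically compatible with the stratifications, so $s$ splits the extension in $\mathrm{Strat}$ (resp. $\mathrm{Strat}^\dagger$). This gives $\mathrm{Ext}_{\mathrm{Strat}}(]\beta[_{!}\mathcal F', ]\alpha[_{*}\mathcal F'') = 0$ (resp. $\mathrm{Ext}_{\mathrm{Strat}^\dagger}(]\beta[_{!}\mathcal F', ]\alpha[_{*}\mathcal F'') = 0$).

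I expect the only slightly delicate point to be the identification of the two diagonal blocks of the transported stratification with $]\alpha[_{*}\epsilon''$ and $]\beta[_{!}\epsilon'$, which is precisely what lets the preceding proposition take over; this is a routine unwinding of the hypothesis that $\iota$ and $q$ respect stratifications. Everything else is formal — in particular the genuine content, the vanishing of the off-diagonal blocks, has already been dealt with in that proposition (by passing to fibres in the usual case, and by using that $p_{1}^\dagger$ and $p_{2}^\dagger$ commute with $]\alpha[_{*}$ and $]\beta[_{!}$ in the overconvergent case).
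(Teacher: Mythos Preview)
Your argument is correct and is precisely the approach the paper has in mind: it records $\mathrm{Ext}(]\beta[_!\mathcal F', ]\alpha[_*\mathcal F'')=0$ and says the corollary ``can be deduced from the proposition,'' leaving the details you have spelled out. The one step worth tightening is the identification of the diagonal blocks: compatibility of $\iota$ gives the $(2,2)$-block $]\alpha[_*\epsilon''$ together with the vanishing of the $(1,2)$-block, while compatibility of $q$ gives the $(1,1)$-block $]\beta[_!\epsilon'$ (and again $(1,2)=0$); the remaining $(2,1)$-block is then killed by the proposition, exactly as you say.
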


Alternatively, it means that any short exact sequence of $i_{Z}^{-1}\mathcal O_{V}$-modules (resp. with a usual stratification, resp. with an overconvergent stratification)
$$
\xymatrix{
0 \ar[r]  & ]\alpha[_{*} \mathcal F'' \ar[r] & \mathcal F  \ar[r] & ]\beta[_{!} \mathcal F' \ar[r] & 0
}
$$
splits (and the splitting is compatible with the extra structure).

From the proposition, we may also deduce the following:

\begin{cor} \label{spliov2}
If $\mathcal F'$ and $\mathcal F''$ are both endowed with a usual (resp. an overconvergent) stratification, then the following map is (resp. maps are) injective
$$
\mathrm{Ext}_{\mathrm{Strat}}( ]\alpha[_{*} \mathcal F'', ]\beta[_{!} \mathcal F')  \hookrightarrow \mathrm{Ext}(  ]\alpha[_{*} \mathcal F'', ]\beta[_{!} \mathcal F')
$$
$$
\left(\mathrm{resp.}\ \mathrm{Ext}_{\mathrm{Strat}^\dagger}( ]\alpha[_{*} \mathcal F'', ]\beta[_{!} \mathcal F')  \hookrightarrow \mathrm{Ext}_{\mathrm{Strat}}( ]\alpha[_{*} \mathcal F'', ]\beta[_{!} \mathcal F')  \hookrightarrow \mathrm{Ext}(  ]\alpha[_{*} \mathcal F'', ]\beta[_{!} \mathcal F')\right).
$$
\end{cor}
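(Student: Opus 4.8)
The plan is to treat all three maps uniformly. An element of the source of any of them is represented by a short exact sequence
$$
0 \to \,]\beta[_{!}\mathcal F' \to \mathcal F \to \,]\alpha[_{*}\mathcal F'' \to 0
$$
in the relevant category --- $i_{X}^{-1}\mathcal O_{V}$-modules carrying a usual stratification for $\mathrm{Strat}$, carrying an overconvergent stratification for $\mathrm{Strat}^\dagger$ --- and the claim is that if its image in the target is $0$, then it is already split at the level of the source. So I would start by assuming the image is $0$: by definition the sequence then splits at the target level, hence in particular \emph{as a sequence of $i_{X}^{-1}\mathcal O_{V}$-modules}. Fix such a module splitting; it identifies $\mathcal F$ with $]\beta[_{!}\mathcal F' \oplus \,]\alpha[_{*}\mathcal F''$ and transports the stratification of $\mathcal F$ to a (usual, resp. overconvergent) stratification $\epsilon$ on this direct sum.

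The next step is to read off the blocks of $\epsilon$. Since the inclusion of $]\beta[_{!}\mathcal F'$ and the projection onto $]\alpha[_{*}\mathcal F''$ are the morphisms of our short exact sequence, they are morphisms in the stratified category; hence the stratification induced by $\epsilon$ on the first summand is $]\beta[_{!}\epsilon'$ and the one induced on the quotient summand is $]\alpha[_{*}\epsilon''$, i.e. the diagonal blocks of $\epsilon$ are the prescribed stratifications. Now I invoke the preceding proposition: a usual (resp. overconvergent) stratification on $]\beta[_{!}\mathcal F' \oplus \,]\alpha[_{*}\mathcal F''$ is determined by its restrictions to $\mathcal F'$ and $\mathcal F''$ because the cross terms $\varphi_{n},\psi_{n}$ (resp. $\varphi,\psi$) are forced to vanish. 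Consequently $\epsilon = \,]\beta[_{!}\epsilon' \oplus \,]\alpha[_{*}\epsilon''$, the chosen module splitting is a morphism in the stratified category, and the class we started from is zero. Taking the target level to be ``$i_{X}^{-1}\mathcal O_{V}$-modules'' gives injectivity of $\mathrm{Ext}_{\mathrm{Strat}}(\ldots)\hookrightarrow\mathrm{Ext}(\ldots)$; taking it to be ``$i_{X}^{-1}\mathcal O_{V}$-modules with a usual stratification'' (which still forces a module splitting, so the argument above applies verbatim to the ambient overconvergent stratification) gives injectivity of $\mathrm{Ext}_{\mathrm{Strat}^\dagger}(\ldots)\hookrightarrow\mathrm{Ext}_{\mathrm{Strat}}(\ldots)$, and composing the two yields the remaining arrow.

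I expect the only delicate point to be the identification of the diagonal blocks of the transported stratification with $]\beta[_{!}\epsilon'$ and $]\alpha[_{*}\epsilon''$ rather than with some a priori different stratifications: this rests on the splitting exhibiting $]\beta[_{!}\mathcal F'$ as a sub-object and $]\alpha[_{*}\mathcal F''$ as a quotient object \emph{inside the stratified category}, so that the sub-structure and quotient-structure are the given ones. Everything with genuine content --- the vanishing of the cross terms, which is exactly what keeps an overconvergent stratification from coupling the closed part to the open part --- has already been established in the proposition, and what remains is a short diagram chase requiring no further geometric input.
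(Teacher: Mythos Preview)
Your argument is correct and is exactly the approach intended in the paper: the corollary is deduced directly from the preceding proposition by observing that any module-level splitting transports the stratification of $\mathcal F$ to a stratification on $]\beta[_{!}\mathcal F' \oplus\, ]\alpha[_{*}\mathcal F''$ whose off-diagonal blocks must vanish, so the splitting is automatically compatible with the (overconvergent) stratifications. The paper states this only tersely (and records the reformulation you spell out immediately after the corollary), so your write-up simply makes the deduction explicit.
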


Alternatively, it means that if $\mathcal F$ is an $i_{X}^{-1}\mathcal O_{V}$-module with a usual (resp. an overconvergent) stratification, \emph{and} if the exact sequence of $i_{X}^{-1}\mathcal O_{V}$-modules
$$
\xymatrix{
0 \ar[r]  & ]\beta[_{!} \mathcal F_{|]Z[}\ar[r] & \mathcal F  \ar[r] & ]\alpha[_{*} \mathcal F_{|]U[} \ar[r] & 0
}
$$
splits, then the splitting is always compatible with the (resp. the overconvergent) stratifications.

We are now ready to prove our main result:

\begin{prop} \label{fulfait}
Let
$$
\xymatrix{X \ar@{^{(}->}[r] \ar[d]^f & P \ar[d]^v & P_{K} \ar[l] \ar[d]^{v_{K}} & V \ar[l] \ar[d]^u \\ C \ar@{^{(}->}[r] & S & S_{K} \ar[l] & O \ar[l]
}
$$
be a formal morphism of overconvergent varieties with $f$ quasi-compact, $v$ smooth at $X$, $O$ locally separated and $V$ a good neighborhood of $X$ in $P_{K} \times_{S_{K}} O$.
If $\mathcal F$ and $\mathcal G$ are two constructible $i_{X}^{-1}\mathcal O_{V}$-modules endowed with an overconvergent stratification, then
$$
\mathrm{Hom}_{\mathrm{Strat}^\dagger}(\mathcal F, \mathcal G)  \simeq \mathrm{Hom}_{\mathrm{Strat}}(\mathcal F, \mathcal G).
$$
\end{prop}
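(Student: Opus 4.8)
The plan is to prove fullness — injectivity of the two horizontal‑section functors into $\mathrm{Hom}(\mathcal F,\mathcal G)$ having already been recorded just before the statement — by noetherian induction on $X$, the base case being the one in which $\mathcal F$ and $\mathcal G$ are locally finitely presented. For the inductive step I would exploit the fundamental exact sequence. By Proposition \ref{relco} we may regard $\mathcal F$ and $\mathcal G$ as realizations on $(X,V)$ of constructible isocrystals on $X_V/O$, and by Lemma \ref{semloc} we may choose a dense open immersion $\alpha\colon U\hookrightarrow X$ with closed complement $\beta\colon Z\hookrightarrow X$ so that $\mathcal F_{|U}$ and $\mathcal G_{|U}$ are locally finitely presented. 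Realizing Corollary \ref{exact} on $(X,V)$ and identifying the terms via Propositions \ref{invim} and \ref{direct}, we obtain for $\mathcal E\in\{\mathcal F,\mathcal G\}$ an exact sequence
\[
0\longrightarrow\,]\beta[_{!}\,\mathcal E_{|Z}\longrightarrow\mathcal E\longrightarrow\,]\alpha[_{*}\,\mathcal E_{|U}\longrightarrow 0
\]
of $i_X^{-1}\mathcal O_V\mathrm{-Mod}$, compatible both with the usual and with the overconvergent stratifications, in which $\mathcal E_{|Z}$ is constructible with overconvergent stratification on $(Z,V/O)$ and $\mathcal E_{|U}$ is locally finitely presented on $(U,V/O)$.

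I would then apply $\mathrm{Hom}(-,\mathcal G)$ to the sequence for $\mathcal F$ and $\mathrm{Hom}(\,\cdot\,,-)$ to the sequence for $\mathcal G$, in each of the three categories $\mathrm{Strat}^\dagger(X,V/O)$, $\mathrm{Strat}(X,V/O)$ and $i_X^{-1}\mathcal O_V\mathrm{-Mod}$, and compare the long exact sequences with the four lemma. The inputs are: $\mathrm{Hom}(]\alpha[_{*}(-),]\beta[_{!}(-))=0$ already at the level of modules, hence in all three categories; full faithfulness of $]\alpha[_{*}$ and $]\beta[_{!}$ together with the adjunctions $]\beta[_{!}\dashv\,]\beta[^{-1}$ and $]\alpha[^{-1}\dashv\,]\alpha[_{*}$, all compatible with both flavours of stratification (the stratified analogue of Corollary \ref{eqemb}), so that $\mathrm{Hom}(]\alpha[_{*}\mathcal F_{|U},]\alpha[_{*}\mathcal G_{|U})$ is computed on $(U,V/O)$ and $\mathrm{Hom}(]\beta[_{!}\mathcal F_{|Z},\mathcal G)=\mathrm{Hom}(\mathcal F_{|Z},\mathcal G_{|Z})$ on $(Z,V/O)$, which falls under the induction hypothesis since $Z\subsetneq X$; vanishing of the $\mathrm{Ext}^1$‑terms of shape $\mathrm{Ext}(]\beta[_{!}(-),]\alpha[_{*}(-))$ by Corollary \ref{spliov1}; and injectivity of $\mathrm{Ext}_{\mathrm{Strat}^\dagger}\hookrightarrow\mathrm{Ext}_{\mathrm{Strat}}$ on the terms of shape $\mathrm{Ext}(]\alpha[_{*}(-),]\beta[_{!}(-))$ by Corollary \ref{spliov2}. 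Two nested four‑lemma chases then reduce the desired surjectivity $\mathrm{Hom}_{\mathrm{Strat}^\dagger}(\mathcal F,\mathcal G)\twoheadrightarrow\mathrm{Hom}_{\mathrm{Strat}}(\mathcal F,\mathcal G)$ to the induction hypothesis, Corollaries \ref{spliov1}–\ref{spliov2} and the base case; the auxiliary comparison $\mathrm{Ext}_{\mathrm{Strat}^\dagger(U,V/O)}(\mathcal F_{|U},\mathcal G_{|U})\hookrightarrow\mathrm{Ext}_{\mathrm{Strat}(U,V/O)}(\mathcal F_{|U},\mathcal G_{|U})$ that also surfaces is not an extra hypothesis, since an extension of locally finitely presented modules with overconvergent stratification that splits in $\mathrm{Strat}$ splits in $\mathrm{Strat}^\dagger$ — its splitting, being a $\mathrm{Strat}$‑morphism between coherent modules, is a $\mathrm{Strat}^\dagger$‑morphism by the base case.

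The remaining point, and the one I expect to be the main obstacle, is the base case where $\mathcal F,\mathcal G$ are locally finitely presented. Here one must show that a morphism $\phi\colon\mathcal F\to\mathcal G$ compatible with every infinitesimal stratification $\epsilon^{(n)}$ is compatible with the overconvergent stratifications; the discrepancy $d=\epsilon^\dagger_{\mathcal G}\circ\,]p_{2}[^\dagger\phi-\,]p_{1}[^\dagger\phi\circ\epsilon^\dagger_{\mathcal F}$ is a morphism $]p_{2}[^\dagger\mathcal F\to\,]p_{1}[^\dagger\mathcal G$ of coherent modules on the overconvergent tube $]X[_{V\times_O V}$ whose restriction to each infinitesimal neighbourhood $V^{(n)}$ of the diagonal vanishes, and one wants $d=0$. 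This is exactly where the smoothness of $v$ at $X$ is used: the ideal of the diagonal is then well behaved, so Krull's intersection theorem forces the (coherent) image of $d$ to vanish on a neighbourhood of $]X[_V$ in $]X[_{V\times_O V}$; since $v$ smooth at $X$ makes the latter fibred in connected polydiscs over $]X[_V$, the identity principle along those fibres then yields $d=0$ everywhere. For a module that is not locally finitely presented this rigidity genuinely fails — the overconvergent stratification retains more information than the family of its infinitesimal truncations — so coherence is essential here; everything else is careful but formal book‑keeping with the fundamental exact sequence above. (Alternatively, one may simply invoke the rigidity of overconvergent isocrystals with coherent realizations underlying the coherent equivalence proved in \cite{LeStum11}.)
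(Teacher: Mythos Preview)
Your proposal is correct and follows essentially the same route as the paper: noetherian induction on $X$ via the fundamental exact sequence, with the coherent case as base (the paper simply cites corollary 3.4.10 of \cite{LeStum11} rather than sketching the Krull/identity-principle argument you give), and five-lemma chases fed by Corollaries \ref{spliov1} and \ref{spliov2}. The one step you omit is the preliminary reduction to $X$ finite-dimensional---the paper uses that $O$ is locally compact and $f$ quasi-compact to pull back along a quasi-compact $S'\to S$, which is what makes the noetherian induction legitimate.
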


\begin{proof}
Since we know that the map is injective, we may rephrase the assertion as follows:
we are given a morphism $\varphi : \mathcal F \to \mathcal G$ of constructible $i_{X}^{-1}\mathcal O_{V}$-modules and we have to show that $\varphi$ is actually compatible with the overconvergent stratifications.
This question is clearly local on $O$ which is locally compact.
We may therefore assume that the image of $O$ in $S_{K}$ is contained in some $S'_{K}$ with $S'$ quasi-compact.
We may then pull back the diagram along $S' \to S$ and assume that $X$ is finite dimensional (use assertion \ref{astre} of proposition \ref{firstprop}).
This will allow us to use noetherian induction.

We know (use for example propositions \ref{semloc} and \ref{relco}) that there exists a dense open subset $U$ of $X$ such that the restrictions $\mathcal F''$ and $\mathcal G''$ to $U$ of $\mathcal F$ and $\mathcal G$ are coherent.
Moreover, it was shown in corollary 3.4.10 of \cite{LeStum11} that the proposition is valid for $\mathcal F''$ and $\mathcal G''$ on $U$.
Let us denote as usual by $\alpha : U \hookrightarrow X$ the inclusion map.
Since $]\alpha[_{*}$ is fully faithful, we see that the proposition is valid for $]\alpha[_{*}\mathcal F''$ and $]\alpha[_{*}\mathcal G''$.
In other words, we have a bijection
\begin{align} \label{pf1}
\mathrm{Hom}_{\mathrm{Strat}^\dagger}(]\alpha[_{*}\mathcal F'', ]\alpha[_{*}\mathcal G'')  \simeq \mathrm{Hom}_{\mathrm{Strat}}(]\alpha[_{*}\mathcal F'', ]\alpha[_{*}\mathcal G'').
\end{align}
We denote now by $\beta : Z \hookrightarrow X$ the inclusion of a closed complement of $U$ and let $\mathcal F'$ and $\mathcal G'$ be the restrictions of $\mathcal F$ and $\mathcal G$ to $Z$.
And we observe the following commutative diagram:
$$
\xymatrix{
0  \ar[d] & 0  \ar[d] \\
\mathrm{Hom}_{\mathrm{Strat}^\dagger}(]\alpha[_{*}\mathcal F'', \mathcal G) \ar[d] \ar@{^{(}->}[r] & \mathrm{Hom}_{\mathrm{Strat}}(]\alpha[_{*}\mathcal F'', \mathcal G) \ar[d] \\ 
\mathrm{Hom}_{\mathrm{Strat}^\dagger}(]\alpha[_{*}\mathcal F'', ]\alpha[_{*}\mathcal G'')  \ar[d] \ar[r]^-\simeq & \mathrm{Hom}_{\mathrm{Strat}}(]\alpha[_{*}\mathcal F'', ]\alpha[_{*}\mathcal G'')  \ar[d] \\ 
\mathrm{Ext}_{\mathrm{Strat}^\dagger}(]\alpha[_{*}\mathcal F'', ]\beta[_{!}\mathcal G') \ar@{^{(}->}[r] & \mathrm{Ext}_{\mathrm{Strat}}(]\alpha[_{*}\mathcal F'', ]\beta[_{!}\mathcal G').
}
$$
The columns are exact because $\mathrm{Hom}(]\alpha[_{*}\mathcal F'', ]\beta[_{!}\mathcal G') = 0$, the bottom map is injective thanks to corollary \ref{spliov2} and the middle map is the isomorphism \eqref{pf1}.
It follows from the five lemma (or an easy diagram chasing) that the upper map is necessarily bijective: we have
\begin{align} \label{pf2}
\mathrm{Hom}_{\mathrm{Strat}^\dagger}(]\alpha[_{*}\mathcal F'', \mathcal G)  \simeq \mathrm{Hom}_{\mathrm{Strat}}(]\alpha[_{*}\mathcal F'', \mathcal G'').
\end{align}

We turn now to the other side: by induction, the proposition is valid for $\mathcal F'$ and $\mathcal G'$ on $Z$, and since $]\beta[_{!}$ is fully faithful, it also holds for $]\beta[_{!}\mathcal F'$ and $]\beta[_{!}\mathcal G'$.
Hence, we have
\begin{align} \label{pf3}
\mathrm{Hom}_{\mathrm{Strat}^\dagger}(]\beta[_{!}\mathcal F', ]\beta[_{!}\mathcal G')  \simeq \mathrm{Hom}_{\mathrm{Strat}}(]\beta[_{!}\mathcal F', ]\beta[_{!}\mathcal G').
\end{align}
Now, we consider the commutative square
$$
\xymatrix{
\mathrm{Hom}_{\mathrm{Strat}^\dagger}(]\beta[_{!}\mathcal F', ]\beta[_{!}\mathcal G') \ar[d]^-\simeq \ar[r]^-\simeq & \mathrm{Hom}_{\mathrm{Strat}}(]\beta[_{!}\mathcal F', ]\beta[_{!}\mathcal G') \ar[d]^-\simeq \\ 
\mathrm{Hom}_{\mathrm{Strat}^\dagger}(]\beta[_{!}\mathcal F',\mathcal G) \ar@{^{(}->}[r]  & \mathrm{Hom}_{\mathrm{Strat}}(]\beta[_{!}\mathcal F', \mathcal G).
}
$$
The vertical maps are bijective because $\mathrm{Hom}(]\beta[_{!}\mathcal F', ]\alpha[_{*}\mathcal G'') = 0$ and the upper map is simply the isomorphism \eqref{pf3}.
If follows that we have an isomorphism
\begin{align} \label{pf4}
\mathrm{Hom}_{\mathrm{Strat}^\dagger}(]\beta[_{!}\mathcal F', \mathcal G)  \simeq \mathrm{Hom}_{\mathrm{Strat}}(]\beta[_{!}\mathcal F', \mathcal G).
\end{align}
In order to end the proof, we will need to kill another obstruction.
Since the proposition holds for $]\alpha[_{*}\mathcal F''$ and \emph{any} constructible $\mathcal G$, then the following canonical map is necessarily injective:
\begin{align} \label{pf5}
\mathrm{Ext}_{\mathrm{Strat}^\dagger}(]\alpha[_{*}\mathcal F'', \mathcal G)  \hookrightarrow \mathrm{Ext}_{\mathrm{Strat}}(]\alpha[_{*}\mathcal F'', \mathcal G).
\end{align}
We consider now the commutative diagram with exact columns:
$$
\xymatrix{
0  \ar[d] & 0  \ar[d] \\
\mathrm{Hom}_{\mathrm{Strat}^\dagger}(]\alpha[_{*}\mathcal F'', \mathcal G) \ar[d] \ar[r]^\simeq & \mathrm{Hom}_{\mathrm{Strat}}(]\alpha[_{*}\mathcal F'', \mathcal G) \ar[d] \\
\mathrm{Hom}_{\mathrm{Strat}^\dagger}(\mathcal F, \mathcal G) \ar[d] \ar@{^{(}->}[r] & \mathrm{Hom}_{\mathrm{Strat}}(\mathcal F, \mathcal G) \ar[d] \\ 
\mathrm{Hom}_{\mathrm{Strat}^\dagger}(]\beta[_{\dagger}\mathcal F',\mathcal G)  \ar[d] \ar[r]^-\simeq & \mathrm{Hom}_{\mathrm{Strat}}(]\beta[_{\dagger}\mathcal F', \mathcal G)  \ar[d] \\ 
\mathrm{Ext}_{\mathrm{Strat}^\dagger}(]\alpha[_{*}\mathcal F'', \mathcal G) \ar@{^{(}->}[r] & \mathrm{Ext}_{\mathrm{Strat}}(]\alpha[_{*}\mathcal F'', \mathcal G).
}
$$
The horizontal isomorphisms are just \eqref{pf2} and \eqref{pf4} and the bottom injection is \eqref{pf5}.
It is then sufficient to apply the five lemma again.
\end{proof}

We may reformulate the statement of the proposition as follows:

\begin{cor}
The forgetful functor from constructible $i_{X}^{-1}\mathcal O_{V}$-modules endowed with an overconvergent stratification to $i_{X}^{-1}\mathcal O_{V}$-modules endowed with a usual stratification is fully faithful.
\end{cor}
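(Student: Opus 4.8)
The plan is to derive this corollary directly from Proposition \ref{fulfait}, of which it is merely a repackaging. First I would make explicit the functor in question: it is (the restriction to constructible modules of) the faithful functor \eqref{map1}, which leaves an $i_{X}^{-1}\mathcal O_{V}$-module unchanged, is the identity on module morphisms, and replaces an overconvergent stratification $\epsilon$ by the usual stratification $(\epsilon^{(n)})_{n\in\mathbb N}$ obtained by pulling $\epsilon$ back along the infinitesimal neighborhoods $V^{(n)}\hookrightarrow V\times_{O}V$. With this description, faithfulness needs no hypothesis at all: it is already recorded by the injectivity of the composite $\mathrm{Hom}_{\mathrm{Strat}^\dagger}(\mathcal F,\mathcal G)\hookrightarrow\mathrm{Hom}_{\mathrm{Strat}}(\mathcal F,\mathcal G)\hookrightarrow\mathrm{Hom}(\mathcal F,\mathcal G)$ noted just before the proposition, so two morphisms compatible with the overconvergent stratifications that become equal after forgetting to usual stratifications were already equal as module maps.

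For fullness I would invoke Proposition \ref{fulfait} verbatim, applied to an arbitrary pair $\mathcal F,\mathcal G$ of constructible $i_{X}^{-1}\mathcal O_{V}$-modules equipped with overconvergent stratifications, under the standing geometric hypotheses ($f$ quasi-compact, $v$ smooth at $X$, $O$ locally separated, $V$ a good neighborhood of $X$ in $P_{K}\times_{S_{K}}O$). The proposition says precisely that $\mathrm{Hom}_{\mathrm{Strat}^\dagger}(\mathcal F,\mathcal G)\hookrightarrow\mathrm{Hom}_{\mathrm{Strat}}(\mathcal F,\mathcal G)$ is a bijection, i.e. every morphism of the underlying usual-stratified modules comes from a (necessarily unique) morphism of overconvergent-stratified modules. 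Taken over all pairs of objects, this is exactly the assertion that the forgetful functor is full; combined with the previous paragraph it is fully faithful.

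Consequently there is no real obstacle at the level of the corollary: all the difficulty is concentrated in Proposition \ref{fulfait} itself, namely the noetherian induction on the dimension of $X$ that reduces the question to the coherent case on a dense open $U$ (corollary 3.4.10 of \cite{LeStum11}) and then glues back across $Z=X\setminus U$ using the vanishing of the relevant $\mathrm{Hom}$ and $\mathrm{Ext}$ groups between the strata $]\alpha[_{*}\mathcal F''$ and $]\beta[_{!}\mathcal G'$ together with the five lemma. The only point I would still verify while writing this up is pure bookkeeping: that the category named in the corollary is literally the source of Proposition \ref{fulfait} and that it is well behaved (abelian, stable under the restrictions used in the induction), which is guaranteed by Proposition \ref{relco}.
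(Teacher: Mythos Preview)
Your proposal is correct and matches the paper's approach exactly: the paper gives no separate proof for this corollary, treating it as a direct restatement of Proposition \ref{fulfait}, and your write-up spells out precisely why (faithfulness from the already-noted injection, fullness from the bijection in the proposition).
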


It is also worth mentioning the following immediate consequence:

\begin{cor} \label{injbij}
If $\mathcal F$ and $\mathcal G$ are two constructible $i_{X}^{-1}\mathcal O_{V}$-modules endowed with an overconvergent stratification, then we have  an injective map
\begin{align} \label{sur}
\mathrm{Ext}_{\mathrm{Strat}^\dagger}(\mathcal F, \mathcal G)  \hookrightarrow \mathrm{Ext}_{\mathrm{Strat}}(\mathcal F, \mathcal G).
\end{align}
\end{cor}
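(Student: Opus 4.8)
The plan is to deduce this directly from Proposition \ref{fulfait} (equivalently, from the reformulation just above that the forgetful functor is fully faithful on constructible objects), via the standard mechanism by which a fully faithful, faithful functor sends non-split extensions to non-split extensions.

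First I would unwind the definitions. An element of $\mathrm{Ext}_{\mathrm{Strat}^\dagger}(\mathcal F, \mathcal G)$ is represented by a short exact sequence $0 \to \mathcal G \to \mathcal E \to \mathcal F \to 0$ of $i_{X}^{-1}\mathcal O_{V}$-modules endowed with compatible overconvergent stratifications, say with $v : \mathcal E \to \mathcal F$ the surjection, and the map \eqref{sur} sends its class to the class of the same sequence equipped with the underlying usual stratifications. The key preliminary observation is that the middle term $\mathcal E$ is again constructible: its underlying $i_{X}^{-1}\mathcal O_{V}$-module is an extension of the constructible modules $\mathcal F$ and $\mathcal G$, and constructibility is stable under extension (restrict to a common refinement of the two locally finite coverings by locally closed subvarieties and use that $i_{Y\subset X}^{-1}$ is exact and that an extension of coherent modules is coherent). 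Hence Proposition \ref{fulfait} is available not only for the pairs $(\mathcal F,\mathcal G)$ but also for $(\mathcal F,\mathcal E)$.

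Now suppose the image of the class under \eqref{sur} vanishes, i.e.\ there is a section $s : \mathcal F \to \mathcal E$ of $v$ compatible with the usual stratifications. Since $\mathcal F$ and $\mathcal E$ are both constructible, Proposition \ref{fulfait} gives that $\mathrm{Hom}_{\mathrm{Strat}^\dagger}(\mathcal F, \mathcal E) \to \mathrm{Hom}_{\mathrm{Strat}}(\mathcal F, \mathcal E)$ is bijective; let $\widetilde s$ be the unique preimage of $s$. It then remains to check that $v \circ \widetilde s = \mathrm{id}_{\mathcal F}$ in $\mathrm{Strat}^\dagger$, and this follows because the forgetful functor $\mathrm{Strat}^\dagger(X,V/O) \to \mathrm{Strat}(X,V/O)$ is faithful and carries $v \circ \widetilde s$ to $v \circ s = \mathrm{id}_{\mathcal F}$. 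Therefore the original sequence splits already in $\mathrm{Strat}^\dagger$, so its class is zero and \eqref{sur} is injective.

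I do not expect any genuine obstacle here; the only point needing a word of care is the stability of constructibility under extension, which is exactly what lets us feed the middle term $\mathcal E$ (and not merely $\mathcal F$ and $\mathcal G$) into Proposition \ref{fulfait}. If one is wary that $\mathrm{Strat}^\dagger$ need not be abelian, note that the argument never forms kernels or cokernels in $\mathrm{Strat}^\dagger$: it uses only the section $s$ and the faithfulness of the forgetful functor, so it goes through with ``short exact sequence'' interpreted via the underlying $i_{X}^{-1}\mathcal O_{V}$-modules.
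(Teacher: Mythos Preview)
Your proof is correct and is exactly the argument the paper has in mind: the corollary is stated as an ``immediate consequence'' of Proposition~\ref{fulfait}, and the paper's own gloss (``any splitting for the usual stratifications will be compatible with the overconvergent stratifications'') is precisely the mechanism you spell out, with the constructibility of the middle term being the one point that needs checking. The same deduction (full faithfulness on constructible objects $\Rightarrow$ injectivity on $\mathrm{Ext}$) already appears inside the proof of Proposition~\ref{fulfait} itself, so your write-up matches the paper's approach.
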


It means that if
\begin{align} \label{ext}
\xymatrix{
0 \ar[r]  & \mathcal F \ar[r] & \mathcal G  \ar[r] & \mathcal H \ar[r] & 0
}
\end{align}
is a short exact sequence of constructible $i_{X}^{-1}\mathcal O_{V}$-modules endowed with an overconvergent stratification, then any splitting for the usual stratifications will be compatible with the overconvergent stratifications.
I strongly suspect that much more is actually true: if we are given an exact sequence \eqref{ext} of constructible $i_{X}^{-1}\mathcal O_{V}$-modules endowed with usual stratifications and if the stratifications of $\mathcal F'$ and $\mathcal F''$ are overconvergent, then the stratification of $\mathcal F$ should also be overconvergent.
In other words, the injective map \eqref{sur} would be an isomorphism.

If $(X, V)$ is a variety over an overconvergent variety $(C,O)$, then we will denote by
$$
\mathrm{MIC}^\dagger_{\mathrm{cons}}(X,V/O)
$$
the category of constructible $i_{X}^{-1}\mathcal O_{V}$-modules $\mathcal F$ endowed with an overconvergent connection (recall that it means that the connection extends to some overconvergent stratification).
Then, we can also state the following corollary:

\begin{cor} \label{equiV}
If $\mathrm{Char}(K) = 0$, then the realization functor induces an equivalence of categories
$$
\mathrm{Isoc}^\dagger_{\mathrm{cons}}(X_{V}/O) \simeq \mathrm{MIC}^\dagger_{\mathrm{cons}}(X,V/O).
$$
\end{cor}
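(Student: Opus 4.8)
The plan is to exhibit the realization functor of the statement --- the restriction of \eqref{map3} to constructible objects --- as a composite of three functors, two of which are already known to be equivalences, the remaining one being precisely the fully faithful comparison of Proposition~\ref{fulfait}. Indeed, by the second part of Proposition~\ref{relco} the realization functor identifies $\mathrm{Isoc}^\dagger_{\mathrm{cons}}(X_V/O)$ with the category $\mathcal C$ of constructible $i_X^{-1}\mathcal O_V$-modules equipped with an overconvergent stratification, and composing \eqref{map1} with \eqref{map2} sends an object $(\mathcal F,\epsilon)$ of $\mathcal C$ to the constructible module $\mathcal F$ endowed with the integrable connection $\nabla$ induced by $\epsilon$. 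Since $\nabla$ extends to $\epsilon$ by construction, this composite lands in $\mathrm{MIC}^\dagger_{\mathrm{cons}}(X,V/O)$ and, unwinding the definitions, coincides with the realization functor of the statement precomposed with the equivalence of Proposition~\ref{relco}. Thus it is enough to prove that the functor $\mathcal C \to \mathrm{MIC}^\dagger_{\mathrm{cons}}(X,V/O)$, $(\mathcal F,\epsilon)\mapsto(\mathcal F,\nabla)$, is an equivalence.

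Essential surjectivity is immediate from the definition of $\mathrm{MIC}^\dagger_{\mathrm{cons}}(X,V/O)$: by definition each of its objects is a constructible module whose connection extends to some overconvergent stratification, hence lies in the image of the above composite. Faithfulness is clear since both \eqref{map1} and \eqref{map2} are faithful. The only substantial point is fullness. Let $\mathcal F$ and $\mathcal G$ be constructible $i_X^{-1}\mathcal O_V$-modules carrying overconvergent stratifications and let $\varphi\colon\mathcal F\to\mathcal G$ be an $i_X^{-1}\mathcal O_V$-linear map compatible with the induced connections. Since $\mathrm{Char}(K)=0$ and, in the geometric situations at hand (e.g.\ that of Proposition~\ref{fulfait}), $V$ is smooth over $O$ in a neighbourhood of $]X[_V$, the functor \eqref{map2} is an equivalence; hence $\varphi$ is automatically compatible with the usual stratifications underlying those of $\mathcal F$ and $\mathcal G$. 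Proposition~\ref{fulfait} --- the assertion that the forgetful functor from constructible modules with an overconvergent stratification to constructible modules with a usual stratification is fully faithful --- then shows that $\varphi$ is compatible with the overconvergent stratifications themselves. This proves fullness, and therefore the corollary.

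In effect, all the genuine content is carried by Proposition~\ref{fulfait}, and the rest is formal. The one step deserving a moment's care --- and the only place the hypothesis $\mathrm{Char}(K)=0$ (together with the smoothness of $V$ over $O$ around the tube) intervenes --- is the reduction of ``compatible with the connections'' to ``compatible with the usual stratifications'' via the fact that \eqref{map2} is then an equivalence; once this is granted, Proposition~\ref{fulfait} applies verbatim.
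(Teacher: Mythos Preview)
Your proof is correct and follows precisely the route the paper intends: the corollary is stated without proof because it is the immediate combination of Proposition~\ref{relco}(2) (identifying $\mathrm{Isoc}^\dagger_{\mathrm{cons}}(X_V/O)$ with constructible modules carrying an overconvergent stratification), the definition of $\mathrm{MIC}^\dagger_{\mathrm{cons}}$ (essential surjectivity), and Proposition~\ref{fulfait} together with the $\mathrm{Char}(K)=0$ equivalence \eqref{map2} (full faithfulness). Your care in noting that the hypotheses of Proposition~\ref{fulfait} are implicitly in force, and that smoothness of $V$ over $O$ near the tube is needed to invoke \eqref{map2}, is exactly right.
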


As a consequence, we may observe that we will have, for a constructible isocrystal $E$ on $X_{V}/O$,
$$
\Gamma(X_{V}/O, E) \simeq H^{0}_{\mathrm{dR}}(E_{V}),
$$
and we expect the same to hold for higher cohomology spaces (we only know at this point that
$$
H^1(X_{V}/O, E) \subset H^{1}_{\mathrm{dR}}(E_{V})).
$$

Again, we need to work with good overconvergent varieties for the theorem to holds:

\begin{thm} \label{thm}
Assume that $\mathrm{Char}(K) = 0$ and that we are given a commutative diagram
\begin{align} \label{geomag}
\xymatrix{X \ar@{^{(}->}[r] \ar[d]^f & P \ar[d]^v & P_{K} \ar[l] \ar[d]^{v_{K}} & V \ar[l] \ar[d]^u \\ C \ar@{^{(}->}[r] & S & S_{K} \ar[l] & O \ar[l]
}
\end{align}
where $P$ is a formal scheme over $S$ which is proper and smooth around $X$ and $V$ is a neighborhood of the tube of $X$ in $P_{K} \times_{S_{K}} O$ (and $O$ is good in the neighborhood of $]C[$).
Then the realization functor induces an equivalence of categories
$$
\mathrm{Isoc}^\dagger_{\mathrm{cons}}(X/O) \simeq \mathrm{MIC}^\dagger_{\mathrm{cons}}(X,V/O)
$$
between constructible overconvergent isocrystals on $X/O$ and constructible $i_{X}^{-1}\mathcal O_{V}$-modules endowed with an overconvergent connection.
\end{thm}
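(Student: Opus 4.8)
The plan is to obtain the equivalence by combining Corollary~\ref{equiV} with the identification of overconvergent isocrystals on $X/O$ and on $X_V/O$; the one genuinely delicate ingredient — that an overconvergent stratification on a constructible module is determined by its underlying usual stratification — has already been isolated in Proposition~\ref{fulfait}, hence in Corollary~\ref{equiV}, so what is left is essentially bookkeeping. First I would note that the realization functor in the statement is by construction the composite
$$
\mathrm{Isoc}^\dagger(X/O)\ \simeq\ \mathrm{Isoc}^\dagger(X_V/O)\ \longrightarrow\ \mathrm{MIC}^\dagger(X,V/O),
$$
where the first arrow is an equivalence because the diagram~\eqref{geomag} is a geometric situation, so corollary 2.5.12 of \cite{LeStum11} provides the isomorphism of topoi $(X_V/O)_{\mathrm{An}^\dagger}\simeq(X/O)_{\mathrm{An}^\dagger}$, and the second arrow is the realization functor of Corollary~\ref{equiV}.

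Next I would check that the first arrow respects constructibility. That isomorphism of topoi is in fact an isomorphism of \emph{ringed} topoi and is compatible with restriction along any locally closed embedding $\gamma:Y\hookrightarrow X$ (the two restrictions being $Y/O$ and $Y_V/O$, again a geometric situation); it therefore carries locally finitely presented modules to locally finitely presented modules, and hence, by the very definition of constructibility, restricts to an equivalence $\mathrm{Isoc}^\dagger_{\mathrm{cons}}(X/O)\simeq\mathrm{Isoc}^\dagger_{\mathrm{cons}}(X_V/O)$ — alternatively one may invoke Proposition~\ref{firstprop}, since $X_V/O\to X/O$ is a covering. It then remains to apply Corollary~\ref{equiV} to $(X,V)$ over $(C,O)$. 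Its hypotheses are those of Proposition~\ref{fulfait} ($f$ quasi-compact, $v$ smooth at $X$, $O$ locally separated, $V$ a good neighborhood of $X$ in $P_K\times_{S_K}O$), which are slightly stronger than the ones assumed here: $v$ is smooth around $X$ and $V$ is good by hypothesis, but $f$ need not be quasi-compact and $O$ is only good near $]C[$. I would therefore localize, as in the proof of Proposition~\ref{fulfait}: cover $X$ by a locally finite family of quasi-compact open subvarieties and $O$ by affinoid — hence locally separated — opens, apply Corollary~\ref{equiV} on each piece, and glue. The gluing is legitimate because constructibility is local on $X$ and all the constructions are local on $O$ (Proposition~\ref{firstprop}) and the realization functor is compatible with these restrictions: full faithfulness is checked on the covering, and an object of $\mathrm{MIC}^\dagger_{\mathrm{cons}}(X,V/O)$ is reconstructed from its restrictions to the pieces together with the transported descent data.

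The main obstacle is already behind us — it is Proposition~\ref{fulfait}. What remains calls for care rather than new ideas: one must make sure the covering of $X$ can be taken by \emph{open} subvarieties (so that the restricted objects still carry an overconvergent connection and restriction of isocrystals is well behaved), that the good-versus-locally-separated issue really evaporates after passing to affinoid pieces of $O$, and that the descent data produced over the overlaps is automatically compatible with the overconvergent stratifications — which is exactly what the full faithfulness in Corollary~\ref{equiV} supplies.
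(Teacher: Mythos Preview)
Your proposal is correct and follows the same strategy as the paper: factor the realization functor through the equivalence $\mathrm{Isoc}^\dagger(X/O)\simeq\mathrm{Isoc}^\dagger(X_V/O)$ coming from the geometric situation, then apply Corollary~\ref{equiV}. The paper's proof is a single sentence invoking the second assertion of proposition~3.5.8 of \cite{LeStum11} (rather than corollary~2.5.12, which gives the underlying isomorphism of topoi) and then Corollary~\ref{equiV}; your extra paragraphs about preservation of constructibility and about matching the hypotheses of Proposition~\ref{fulfait} via localization are reasonable precautions that the paper simply elides.
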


\begin{proof}
Using the second assertion of proposition 3.5.8 in \cite{LeStum11}, this follows immediately from corollary \ref{equiV}.
\end{proof}

As a consequence of the theorem, we see that the notion of constructible module endowed with an overconvergent connection only depends on $X$ and \emph{not} on the choice of the geometric realization \eqref{geomag}.
It is likely that this could have been proven directly using Berthelot's technic of diagonal embedding.
However, we believe that our method is much more natural because functoriality is built-in.

%
%
\bibliographystyle{plain}
\addcontentsline{toc}{section}{Bibliography}
\bibliography{BiblioBLS}

\begin{thebibliography}{10}

\bibitem{Abe13*}
Tomoyuki Abe.
\newblock Langlands correspondence for isocrystals and existence of crystalline
  companion for curves.
\newblock {\em preprint}, 2013.

\bibitem{AbeCaro13*}
Tomoyuki Abe and Daniel Caro.
\newblock Theory of weights in p-adic cohomology.

\bibitem{Berkovich90}
Vladimir~G. Berkovich.
\newblock {\em Spectral theory and analytic geometry over non-{A}rchimedean
  fields}, volume~33 of {\em Mathematical Surveys and Monographs}.
\newblock American Mathematical Society, Providence, RI, 1990.

\bibitem{Berkovich93}
Vladimir~G. Berkovich.
\newblock \'{E}tale cohomology for non-{A}rchimedean analytic spaces.
\newblock {\em Inst. Hautes \'Etudes Sci. Publ. Math.}, (78):5--161 (1994),
  1993.

\bibitem{Berthelot96c*}
Pierre Berthelot.
\newblock Cohomologie rigide et cohomologie \`a support propre, premi\`ere
  partie.
\newblock {\em Pr\'epublication de l'IRMAR}, 96(03):89, 1996.

\bibitem{Berthelot96}
Pierre Berthelot.
\newblock {${\mathcal D}$}-modules arithm\'etiques. {I}. {O}p\'erateurs
  diff\'erentiels de niveau fini.
\newblock {\em Ann. Sci. \'Ecole Norm. Sup. (4)}, 29(2):185--272, 1996.

\bibitem{Berthelot02}
Pierre Berthelot.
\newblock Introduction \`a la th\'eorie arithm\'etique des {$\mathcal
  D$}-modules.
\newblock {\em Ast\'erisque}, (279):1--80, 2002.
\newblock Cohomologies $p$-adiques et applications arithm\'etiques, II.

\bibitem{Borel87}
A.~Borel, P.-P. Grivel, B.~Kaup, A.~Haefliger, B.~Malgrange, and F.~Ehlers.
\newblock {\em Algebraic {$D$}-modules}, volume~2 of {\em Perspectives in
  Mathematics}.
\newblock Academic Press Inc., Boston, MA, 1987.

\bibitem{Caro09}
Daniel Caro.
\newblock {$\mathcal D$}-modules arithm\'etiques surholonomes.
\newblock {\em Ann. Sci. \'Ec. Norm. Sup\'er. (4)}, 42(1):141--192, 2009.

\bibitem{Deligne*}
Pierre Deligne.
\newblock Cristaux discontinus.
\newblock {\em note}.

\bibitem{Deligne70}
Pierre Deligne.
\newblock {\em \'{E}quations diff\'erentielles \`a points singuliers
  r\'eguliers}.
\newblock Springer-Verlag, Berlin, 1970.
\newblock Lecture Notes in Mathematics, Vol. 163.

\bibitem{Iversen86}
Birger Iversen.
\newblock {\em Cohomology of sheaves}.
\newblock Universitext. Springer-Verlag, Berlin, 1986.

\bibitem{Kashiwara84}
Masaki Kashiwara.
\newblock The {R}iemann-{H}ilbert problem for holonomic systems.
\newblock {\em Publ. Res. Inst. Math. Sci.}, 20(2):319--365, 1984.

\bibitem{Kashiwara04}
Masaki Kashiwara.
\newblock {$t$}-structures on the derived categories of holonomic {$\mathcal
  D$}-modules and coherent {$\mathcal O$}-modules.
\newblock {\em Mosc. Math. J.}, 4(4):847--868, 981, 2004.

\bibitem{LeStum11}
Bernard Le~Stum.
\newblock The overconvergent site.
\newblock {\em M{\'e}moires de la SMF}, 127, 2011.

\bibitem{LeStum14}
Bernard Le~Stum.
\newblock Constructible $\nabla$-modules on curves.
\newblock {\em Selecta Mathematica}, 20(2):627--674, 2014.

\bibitem{ZureickBrown10}
David Zureick-Brown.
\newblock {\em Rigid cohomology of algebraic stacks}.
\newblock PhD thesis, University of California, 2010.

\bibitem{ZureickBrown14*}
David Zureick-Brown.
\newblock Cohomology with closed support on the overconvergent site.
\newblock {\em preprint}, 2014.

\end{thebibliography}

\end{document}